\newcommand{\frakm}{{\mathfrak m}}
\newtheorem{theorem}{Theorem}[section]
\newtheorem{corollary}[theorem]{Corollary}
\newtheorem{lemma}[theorem]{Lemma}
\newtheorem{proposition}[theorem]{Proposition}
\newenvironment{customthm}[1]
  {\innercustomthm}
  {\endinnercustomthm}
\theoremstyle{definition}
\newtheorem{definition}[theorem]{Definition}
\theoremstyle{remark}
\newtheorem{remark}[theorem]{Remark}
\theoremstyle{question}
\newtheorem{question}[theorem]{Question}
\newtheoremstyle{cited}{.5\baselineskip\@plus.2\baselineskip\@minus.2\baselineskip}{.5\baselineskip\@plus.2\baselineskip\@minus.2\baselineskip}{\itshape}{}{\bfseries}{\bfseries .}{5pt plus 1pt minus 1pt}{\thmname{#1}\thmnumber{ #2}\thmnote{ \normalfont#3}}
\theoremstyle{cited}
\newtheorem{citedthm}[theorem]{Theorem}
\newtheorem{citedque}[theorem]{Question}
\newtheoremstyle{citeddef}{.5\baselineskip\@plus.2\baselineskip\@minus.2\baselineskip}{.5\baselineskip\@plus.2\baselineskip\@minus.2\baselineskip}{}{}{\bfseries}{\bfseries .}{5pt plus 1pt minus 1pt}{\thmname{#1}\thmnumber{ #2}\thmnote{ \normalfont#3}}
\theoremstyle{citeddef}
\newtheorem{citeddef}[theorem]{Definition}
\def\l@subsection{\@tocline{2}{0pt}{2pc}{6pc}{}}
\begin{document}
\title[Vanishing of Tors of absolute integral closures in equicharacteristic zero]{Vanishing of Tors of absolute integral closures in equicharacteristic zero}

\author{Shravan Patankar}
\address{Department of Mathematics, Statistics and Computer Science\\University
of Illinois at Chicago\\Chicago, IL 60607-7045\\USA}
\email{\href{mailto:spatan5@uic.edu}{spatan5@uic.edu}}

\makeatletter
  \hypersetup{
    pdfauthor={Shravan Patankar},
    pdfsubject=\@subjclass,pdfkeywords={Absolute Integral Closures}
  }
\makeatother

\begin{abstract}
We show that a ring $R$ is regular if $Tor_{i}^{R}(R^{+},k) = 0$ for some $i\geq 1$ assuming further that $R$ is a $\mathbb{N}$-graded ring of dimension $2$ finitely generated over an equi-characteristic zero field $k$. This answers a question of Bhatt, Iyengar, and Ma. We use \emph{almost mathematics} over $R^{+}$ to deduce properties of the \emph{noetherian} ring $R$ and rational surface singularities. Moreover we show that $R^{+}$ in equi-characteristic zero is $m$-adically ideal(wise) separated, a condition which appears in the proof of local criterion for flatness. In dimension $2$ it is Ohm-Rush and intersection flat. As an application we show that the hypothesis can be astonishingly vacuous for $i \ll dim(R)$. We show that a positive answer to an old question of Aberbach and Hochster also answers this question. We use our techniques to make some remarks on a question of André and Fiorot regarding `fpqc analgoues' of splinters. 
\end{abstract}

\maketitle

\tableofcontents

\section{Introduction}
Throughout this article, all rings are assumed to be commutative and contain an identity element. The \emph{absolute integral closure} of an integral domain $R$, denoted by $R^{+}$, is the integral closure of $R$ inside an algebraic closure of its fraction field. In spite of being large and non-noetherian it is of great importance in commutative algebra and algebraic geometry. The purpose of this document is to give the first answers to the following question of Bhatt, Iyengar, and Ma:

\begin{citedque}[{\citeleft\citen{BIM19}\citemid end of Section 4\citeright }]\label{BIMq} If $(R,m,k)$ is a noetherian local domain
of equi-characteristic zero (i.e. $\mathbb{Q} \subset R$) and $Tor_{i}^{R}(R^{+},k) = 0$ for some $i \geq 1$, then is $R$ regular?
\end{citedque}

We show the following:
\begin{customthm}{\ref{thm:main}}
Let $R$ be a $\mathbb{N}$-graded ring of dimension $2$ finitely generated over an equi-characterstic zero field $k$. If $Tor_{i}^{R}(R^{+},k) = 0$ for some $i \geq 1$ then $R$ is regular.   
\end{customthm}
Henceforth we will often refer to equi-characteristic zero (i.e. $\mathbb{Q} \subset R)$ simply as characteristic zero. The work of Bhatt, Iyengar, and Ma has been the subject of several seminars and reading groups around the world because of it's connections to the direct summand theorem and perfectoid rings and Question \ref{BIMq} is the only question explicitly stated in it. The motivation for Question \ref{BIMq} arises naturally out of the following theorem of Bhatt, Iyengar, and Ma and the role absolute integral closures have played in commutative algebra and algebraic geometry.
\begin{citedthm}[{\citeleft\citen{BIM19}\citemid Theorem.\
  4.13\citeright }, \citeleft\citen{Bha21}\citemid Remark.\
  5.6\citeright] \label{BIMpm}
  Let $R$ be an excellent local domain of positive or mixed characteristic. If $Tor_{i}^{R}(R^{+},k) = 0$ for some $i \geq 1$, then $R$ is regular.
\end{citedthm}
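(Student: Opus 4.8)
The plan is to deduce regularity from the Cohen--Macaulayness of $R^{+}$, which is the single deep input and the only place the characteristic hypothesis is used. First I would reduce to the case that $R$ is a \emph{complete local domain}: completion is faithfully flat, preserves (non)regularity and excellence, and using excellence (so that $\widehat R$ is reduced and equidimensional) together with the behaviour of $(-)^{+}$ under completion and passage to a minimal prime, the general case follows from this one. One then invokes that $R^{+}$ is a balanced big Cohen--Macaulay $R$-algebra: for $\operatorname{char}R=p>0$ this is the theorem of Hochster--Huneke, and in mixed characteristic it is Bhatt's theorem, built on André's perfectoid Abhyankar lemma and almost purity. Hence every system of parameters of $R$ is a regular sequence on $R^{+}$, and $\fm R^{+}\neq R^{+}$.

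Next I would reduce modulo a regular sequence. After a faithfully flat extension assume the residue field infinite, and choose $\underline y=y_{1},\dots,y_{t}$ with $t=\operatorname{depth}R$ to be simultaneously an $R$-regular sequence and part of a minimal generating set of $\fm$ (generic linear combinations of a minimal generating set of $\fm$ work); being part of a system of parameters, $\underline y$ is then also $R^{+}$-regular. Then $R/\underline y R$ is resolved by the Koszul complex $K_{\bullet}(\underline y;R)$, so $\operatorname{Tor}^{R}_{>0}(R^{+},R/\underline y R)=H_{>0}(K_{\bullet}(\underline y;R^{+}))=0$, and the base-change spectral sequence collapses to $\operatorname{Tor}^{R}_{i}(R^{+},k)\cong\operatorname{Tor}^{A}_{i}(C,k)$ for all $i$, where $A:=R/\underline y R$ has depth $0$ and $C:=R^{+}/\underline y R^{+}$ is a balanced big CM $A$-algebra with $C/\fm_{A}C\neq0$. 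Since $\underline y$ is part of a minimal generating set of $\fm$, $R$ is regular if and only if $A$ is a field; so it remains to show that if $A$ is local of depth $0$, $C$ a balanced big CM $A$-algebra with $C/\fm_{A}C\neq0$, and $\operatorname{Tor}^{A}_{i}(C,k)=0$ for some $i\geq1$, then $A$ is a field.

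For this last step --- the homological heart of \cite{BIM19} --- suppose $A$ is not a field. If $\dim A>0$, the surviving parameters form a $C$-regular sequence while $A$ itself has depth $0$, and this, together with $\operatorname{Tor}^{A}_{i}(C,k)=0$, already violates a depth/rigidity estimate for $\operatorname{Tor}$ against a big Cohen--Macaulay algebra; so $A$ is Artinian, and not a field. Then, letting $P_{\bullet}\to k$ be the minimal $A$-free resolution and $L=\Omega^{i}_{A}k$ the $i$-th syzygy, minimality gives $L\subseteq\fm_{A}P_{i-1}$ with $\dim_{k}(L/\fm_{A}L)=\beta^{A}_{i}(k)\geq1$, while $\operatorname{Tor}^{A}_{i}(C,k)=\operatorname{Tor}^{A}_{1}(C,\Omega^{i-1}_{A}k)=0$ forces $C\otimes_{A}L\hookrightarrow C\otimes_{A}P_{i-1}$ with image inside $\fm_{A}(C\otimes_{A}P_{i-1})$. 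Since $C$ is the mod-parameters reduction of the big CM domain $R^{+}$, a minimality/rigidity argument shows this is impossible unless $L=0$; hence $k$ has finite projective dimension over $A$, so $A$ is a field --- a contradiction. Therefore $R$ is regular.

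I expect the decisive obstacle to be exactly this last step: converting the (non-Noetherian, purely commutative-algebraic) Cohen--Macaulay property of $R^{+}$ into \emph{regularity} of the Noetherian ring $R$, leveraging the vanishing of a single $\operatorname{Tor}$ all the way to $\operatorname{pd}_{R}k<\infty$. In mixed characteristic there is a further, independent obstacle: the big Cohen--Macaulayness of $R^{+}$ is itself a deep theorem, resting on the full perfectoid machinery.
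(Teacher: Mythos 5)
The paper does not prove this statement itself; it is quoted from \cite[Theorem 4.13]{BIM19} and \cite[Remark 5.6]{Bha21}, so your plan has to be measured against that proof, and it is missing its decisive ingredient. Your preparatory steps are plausible in outline (completion; a sequence $\underline y$ of length $\operatorname{depth}R$ that is $R$-regular, part of a system of parameters, hence $R^{+}$-regular by big Cohen--Macaulayness; the change-of-rings isomorphism $\operatorname{Tor}^{R}_{i}(R^{+},k)\cong\operatorname{Tor}^{A}_{i}(C,k)$ with $A=R/\underline yR$, $C=R^{+}/\underline yR^{+}$), though in mixed characteristic it is the $p$-adic completion $\widehat{R^{+}}$, not $R^{+}$ itself, that is a balanced big Cohen--Macaulay algebra, so even this stage needs an extra argument to move the Tor hypothesis between $R^{+}$ and $\widehat{R^{+}}$. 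The genuine gap is the final step. The statement you reduce to --- $A$ local of depth $0$, $C$ a big Cohen--Macaulay $A$-algebra with $\mathfrak m_{A}C\neq C$, and $\operatorname{Tor}^{A}_{i}(C,k)=0$ for some $i\geq1$ imply $A$ is a field --- is false: once $A$ is Artinian, \emph{every} $A$-algebra with $\mathfrak m_{A}C\neq C$ is vacuously (balanced) big Cohen--Macaulay, so $C=A$ over $A=k[x]/(x^{2})$ satisfies all the hypotheses. Hence no ``minimality/rigidity argument'' that uses only what you invoke (that $C$ is the mod-parameters reduction of a big Cohen--Macaulay \emph{domain}) can close the argument. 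Indeed, if ``big CM domain plus one vanishing Tor implies regular'' were provable by such formal means, it would immediately settle Question \ref{BIMq} in dimension two, where $R^{+}$ is a big Cohen--Macaulay domain in \emph{every} characteristic --- precisely the problem to which this paper devotes its main theorem, the Roberts--Singh--Srinivas almost-vanishing theorem, and the theory of rational surface singularities. The same objection applies to your unproved ``depth/rigidity estimate'' disposing of the case $\dim A>0$.

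What actually closes the gap in \cite{BIM19} is a rigidity theorem special to perfect algebras in characteristic $p$ (where $R^{+}$, being absolutely integrally closed, is perfect and Frobenius is exploited) and to perfectoid algebras in mixed characteristic (via Andr\'e's almost purity and the perfectoid machinery): for such $S$, vanishing of a single $\operatorname{Tor}^{R}_{i}(S,k)$ propagates to higher Tors, and only then does big Cohen--Macaulayness enter to force finite flat dimension, flatness, and regularity in a Kunz-type conclusion (cf.\ \cite[Theorems 4.7 and 4.12]{BIM19}); the present paper's remarks after Question \ref{AHq} make exactly this point, noting that for $i>\dim R$ the argument uses only perfectoidness and not Cohen--Macaulayness. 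So your opening claim that the characteristic hypothesis is used \emph{only} through the Cohen--Macaulayness of $R^{+}$ is incorrect: it is used a second, independent time, at exactly the step your proposal leaves as a hand-wave.
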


We note that such vanishing of Tors type statements and questions are not new and are anticipated as the `only if' version of the above statement is true and follows from the seminal results of Hochster-Huneke \cite{HH94} and Bhatt \cite{Bha21} on Cohen-Macaualyness on absolute integral closures. That is, when $R$ is an excellent regular domain of positive characteristic the Hochster-Huneke result precisely says that $R^{+}$ is \emph{flat} over $R$ and Bhatt shows that when $R$ is an excellent local domain of mixed characteristic a system of parameters starting with $p$ is a \emph{Koszul regular} sequence on $R^{+}$, both of which are clearly \emph{stronger} statements than $Tor_{i}^{R}(R^{+},k) = 0$ for some $i \geq 1$. In particular the positive characteristic part of the above theorem is originally due to Aberbach and Li \cite{AL08} (who use completely different methods) and partial results can be found in the work of Aberbach \cite{Abe04} and Schoutens \cite{Sch03}  dating back to an explicit question of Huneke \cite[Exercise 8.8]{Hun96}. The importance of Hochster-Huneke's and Bhatt's theorems is hard to overstate. Bhatt's mixed characteristic analogue of this result has led to startling progress in the minimal model program in mixed characteristic and to quote Bhatt, the Hochster-Huneke result forms the bedrock of tight closure theory and a large amount of positive characteristic commutative algebra in general. These ideas also have fairly strong applications to complex geometry and algebraic geometry. 

The proof of our main theorem is of independent interest and we give a brief sketch in this paragraph. The hypothesis imply that every module finite normal extension of $R$ is flat. It follows by taking the normalization that $R$ is normal. Next, assume for the moment that $R$ has rational singularities. Since $R$ has dimension $2$ classical theory of rational surface singularities \cite{Lip78} implies that $R$ is $\mathbb{Q}$-Gorenstein and hence its cyclic cover is a module finite normal Gorenstein extension and by descent one can also conclude that $R$ is Gorenstein. This implies that $R$ is a rational double point and it is a classical fact that it has a module finite extension which is regular, and hence we have that $R$ is regular by descent. 

The direct summand conjecture was a famous conjecture in commutative algebra, open for fourty years until it was settled by Y. André in 2016 \cite{And18}. André's proof uses Scholze's theory of perfectoid spaces \cite{Sch12} and is based on an observation of Bhatt \cite{Bha16} which solves certain cases of the direct summand conjecture using Falting's almost purity theorem. Bhatt's idea, simply put, was to use a standard trick involving the interplay between almost mathematics and flat maps to transfer the direct summand conjecture to a problem about perfectoid spaces and rings which in particular are very large and non-noetherian. The \textit{pièce de résistance} of our proof is to use the same trick for the non-noetherian ring $R^{+}$ in characteristic zero to show that the hypothesis of the question imply that $R$ has rational singularities. There is evidence showing that this use of $R^{+}$ is necessary. Boutot's theorem implies that an equicharacteristic zero normal (equivalently a splinter) non-pseudorational ring has \emph{no} module finite extensions which are pseudorational. Hence one can't obtain rationality using the finite extensions techniques used to deduce other properties of $R$.

The reader may now guess why we described the proof. It is perhaps surprising that in spite of being a question purely in commutative algebra our proof makes use of the main theorems about rational surface singularities. This can be compared with previous results in the literature exhibiting surprising algebraic characterizations of rational singularities \cite{Ma18}, \cite{Har98}, \cite{MS97}. Moreover, this suggests a Kunz-type theorem in characteristic zero, i.e. $R$ is regular if and only if $Tor^{R}_{i}(R^{+}, k) = 0$ for some $i$ large enough. A similar theorem has been proved by Ma-Schwede \cite{MS20} who show that $R$ is regular if every alteration has `finite projective dimension' and our result implies `alterations' could be relaxed to `finite covers'. In their study of the behaviour of finite coverings of (affine) schemes in Grothendieck topologies André and Fiorot \cite{AF21} ask for rings which are ``fpqc analgoues of splinters''. It is shown using Kunz's theorem that in positive characteristic the only ``fpqc analgoues of splinters" are regular rings. Using our techniques we are able to make progress towards this question after imposing some conditions on the corresponding coverings. These conditions are closely related to Hochster and Huneke's proof of existence of big Cohen-Macaulay algebras in characteristic zero and ``big equational tight closure''. In higher dimensions we expect certain strengthenings of equational lemma (in the sense of Huneke and Lyubeznik \cite{HL07}) type statements to help make progress towards Question \ref{BIMq} and this raises questions even in positive characteristic. 

In a more homological direction, we show that $R^{+}$ is $m$-adically ideal separated in characteristic zero, a condition which essentially appears in the proof of miracle flatness or the local criterion of flatness. If $R$ is regular of dimension $2$ this implies $R^{+}$ is \emph{Ohm-Rush} or \emph{intersection flat}, notions recently studied by Hochster-Jefferies \cite{HJ21} and Epstein-Shapiro \cite{ES21}. These raise the interesting questions of whether the same holds in positive or mixed characteristic and imply that the hypothesis of Question \ref{BIMq} can be vacuous for $i \ll dim(R)$.

This paper is organized as follows. In Section $2$ we state and prove preliminaries. In Section $3$ we show that $R^{+}$ is $m$-adically ideal separated and that the hypothesis of the question can be vacuous for $i \ll dim(R)$. In Section $4$ we prove the main theorem and some corollaries. In Section $5$ we state natural questions which arise when we try to apply our techniques in higher dimensions. In Section $6$ we use our techniques to make remarks on a question of André and Fiorot \cite{AF21} and in particular answer it under very specific assumptions.

\section{Preliminaries}

In this section we gather several lemmas which we will need in the future sections. They are standard and have been used in works surrounding the homological conjectures and commutative algebra. We also briefly discuss general background regarding rational singularities, $m$-adic ideal separatedness and `fpqc analogues of splinters' in the sense of André and Fiorot.
\begin{lemma} 
Let $R$ be a normal characteristic zero domain. Then the map $R \rightarrow R^{+}$ and consequently $R\rightarrow S$ (where $S$ is any module-finite domain extension of $R$) splits.
\end{lemma}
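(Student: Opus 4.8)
The plan is to exhibit an explicit $R$-linear retraction $R^{+}\to R$ built from normalized field traces, using in an essential way that the base is a normal domain and that $\mathbf{Q}\subseteq R$ (so every relevant finite extension is separable and every relevant nonzero integer is a unit). Once $R\to R^{+}$ is split, the splitting of $R\to S$ is immediate by restriction along a suitable embedding $S\hookrightarrow R^{+}$, which is presumably why the statement is phrased as ``$R\to R^{+}$ and consequently $R\to S$''.

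In detail, I would write $R^{+}=\varinjlim_{i}S_{i}$ as the filtered colimit over the directed poset of module-finite $R$-subalgebras $S_{i}\subseteq R^{+}$ (each is automatically a domain, the poset is directed because the $R$-subalgebra generated by two such is again module finite, and the union of all of them is $R^{+}$). Set $K=\Frac(R)$ and $L_{i}=\Frac(S_{i})$, a finite separable extension of $K$, and define $\phi_{i}\colon L_{i}\to K$ to be the normalized trace $\phi_{i}=\tfrac{1}{[L_{i}:K]}\,\mathrm{tr}_{L_{i}/K}$, noting $\tfrac{1}{[L_{i}:K]}\in\mathbf{Q}\subseteq R$. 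Two verifications are needed. First, $\phi_{i}$ maps $S_{i}$ into $R$: for $x\in S_{i}$ the minimal polynomial of $x$ over $K$ has coefficients in $R$ because $x$ is integral over the integrally closed domain $R$, hence $\mathrm{tr}_{K(x)/K}(x)\in R$ and $\mathrm{tr}_{L_{i}/K}(x)=[L_{i}:K(x)]\,\mathrm{tr}_{K(x)/K}(x)\in R$, so $\phi_{i}(x)\in R$. Second, the $\phi_{i}$ are compatible: if $S_{i}\subseteq S_{j}$ then for $x\in L_{i}$ the tower formula gives $\mathrm{tr}_{L_{j}/K}(x)=[L_{j}:L_{i}]\,\mathrm{tr}_{L_{i}/K}(x)$, and since $[L_{j}:K]=[L_{j}:L_{i}][L_{i}:K]$ we get $\phi_{j}|_{S_{i}}=\phi_{i}$. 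Each $\phi_{i}$ is $K$-linear, in particular $R$-linear, so the $\phi_{i}$ glue to an $R$-linear map $\phi\colon R^{+}\to R$, and $\phi(1)=\phi_{i}(1)=\tfrac{1}{[L_{i}:K]}\,\mathrm{tr}_{L_{i}/K}(1)=\tfrac{1}{[L_{i}:K]}[L_{i}:K]=1$. Thus $\phi$ splits $R\to R^{+}$.

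For the ``consequently'' part: if $S$ is any module-finite domain extension of $R$, then $\Frac(S)$ is algebraic over $K$ and so admits a $K$-algebra embedding into an algebraic closure $\overline{K}$ of $K$; since $S$ is integral over $R$, its image lands in the integral closure of $R$ in $\overline{K}$, i.e. in $R^{+}$. Composing the resulting $R$-algebra inclusion $S\hookrightarrow R^{+}$ with $\phi$ yields an $R$-linear map $S\to R$ sending $1$ to $1$, which is a splitting of $R\to S$.

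The argument is essentially routine once the colimit is set up correctly, and I do not expect a serious obstacle; the one point where the hypotheses are genuinely used — and the place a careless proof would break — is that \emph{both} ingredients of the normalized trace must return to $R$: normality is exactly what forces $\mathrm{tr}_{L_{i}/K}$ of an integral element back into $R$, separability (hence characteristic zero) is what keeps $\mathrm{tr}_{L_{i}/K}(1)=[L_{i}:K]$ nonzero, and $\mathbf{Q}\subseteq R$ is what makes the normalizing factor $1/[L_{i}:K]$ available inside $R$. This is precisely the step that fails in characteristic $p$ when $p\mid[L_{i}:K]$, consistent with the fact that $R\to R^{+}$ need not split there.
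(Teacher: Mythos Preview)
Your proof is correct and follows the same approach as the paper: both use the normalized trace $\tfrac{1}{d}\mathrm{Tr}$ to split $R\to R^{+}$. The paper's version is terser---it treats a single element $\theta\in R^{+}$ and asserts the splitting for $R\to R[\theta]$ extends to all of $R^{+}$---whereas you make the colimit and the compatibility check via the tower formula explicit, but the underlying argument is identical.
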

\begin{proof}
Let $\theta$ be an element of $R^{+}$ satisfying a minimal polynomial of degree $d$. It is readily checked that the normalized trace map $\frac{1}{d}Tr: R[ \theta ] \rightarrow R$ gives a splitting of $R \rightarrow R[\theta]$ and hence we have a splitting of $R \rightarrow R^{+}$ by doing this for every element of $R^{+}$. 
\end{proof}

The following lemma is used several times in the proof of our main theorem.
\begin{lemma}
Let $(R,m,k)$ be an excellent local characteristic zero domain. Let $S$ be a module finite normal extension of $R$, which exists as $R$ is excellent. If $R$ satisfies the assumption of our main theorem, that is if $Tor^{R}_{i}(R^{+}, k) = 0$ for some $i$ then $S$ has projective dimension $dim(R) - i$ over $R$. In particular if $dim(R)=2$ then $S$ is flat over $R$.
\end{lemma}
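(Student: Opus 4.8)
The plan is to reduce the assertion to ordinary homological algebra over the Noetherian ring $R$, using the splitting lemma above together with the Auslander--Buchsbaum formula; essentially all of the content is the vanishing of a single $Tor$ of the finitely generated module $S$. First I would note that $S^{+}=R^{+}$: since $S$ is a module-finite domain extension of $R$, the field $\operatorname{Frac}(S)$ is finite over $\operatorname{Frac}(R)$ and hence has the same algebraic closure, and $S$ is integral over $R$, so the integral closure of $S$ in that algebraic closure is exactly $R^{+}$. Because $S$ is a normal characteristic-zero domain, the splitting lemma above applied to $S$ in place of $R$ gives an $S$-linear, hence $R$-linear, splitting of $S\hookrightarrow S^{+}=R^{+}$; thus $R^{+}\cong S\oplus M$ as $R$-modules for some $R$-module $M$. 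Since $Tor$ commutes with direct sums,
\[
Tor_{i}^{R}(R^{+},k)\;\cong\;Tor_{i}^{R}(S,k)\oplus Tor_{i}^{R}(M,k),
\]
so the hypothesis $Tor_{i}^{R}(R^{+},k)=0$ forces $Tor_{i}^{R}(S,k)=0$.

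Next I would take a minimal free resolution $F_{\bullet}\to S$ of the finitely generated $R$-module $S$. The differentials of $F_{\bullet}\otimes_{R}k$ vanish, so $Tor_{j}^{R}(S,k)\cong F_{j}\otimes_{R}k$; hence $Tor_{i}^{R}(S,k)=0$ forces $F_{i}=0$ by Nakayama, and then $F_{j}=0$ for all $j\ge i$, so $\operatorname{pd}_{R}S\le i-1<\infty$. Now the Auslander--Buchsbaum formula applies and gives $\operatorname{pd}_{R}S=\operatorname{depth}R-\operatorname{depth}_{R}S$, where $\operatorname{depth}_{R}S$ may be computed over the semilocal ring $S$ since $mS$ is primary to the Jacobson radical of $S$. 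Matching this with the claimed value $\dim R-i$ amounts to pinning down $\operatorname{depth}_{R}S$; this bookkeeping of the depth of $S$ -- squeezing $\operatorname{pd}_{R}S$ between the Auslander--Buchsbaum expression and the bound $i-1$ and reading off the depth -- is the step I expect to be the main obstacle in the general case, precisely because $S$ need not be Cohen--Macaulay once $\dim R\ge 3$.

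Finally, when $\dim R=2$ no bookkeeping is needed: $S$ is a two-dimensional normal domain, hence Cohen--Macaulay by Serre's criterion, so $\operatorname{depth}_{R}S=\dim S=2=\dim R\ge\operatorname{depth}R$. Substituting into the Auslander--Buchsbaum formula forces $\operatorname{pd}_{R}S=\operatorname{depth}R-2\le 0$, hence $\operatorname{pd}_{R}S=0$; that is, $S$ is $R$-free, and in particular flat over $R$ (and, as a byproduct, $R$ is Cohen--Macaulay). Since the main theorem invokes this lemma only in dimension two, flatness of $S$ is all that is needed downstream.
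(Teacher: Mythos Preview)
Your argument is correct and follows essentially the same route as the paper: split $S$ off of $R^{+}$ via the trace map to obtain $Tor_{i}^{R}(S,k)=0$, deduce finite projective dimension, and then apply Auslander--Buchsbaum in dimension two using that a normal surface is Cohen--Macaulay. You are in fact more careful than the paper in two places: you explain why $S^{+}=R^{+}$, and you correctly observe that the exact value $\operatorname{pd}_{R}S=\dim R-i$ is not justified in general (the paper's own proof does not establish it either and only treats $\dim R=2$), while the flatness conclusion in dimension two is all that is used downstream.
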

\begin{proof}
The above lemma implies that $S\rightarrow S^{+} = R^{+}$ splits as an $S$-module and hence an $R$-module map. Since $Tor$ is a functor we have $Tor^{R}_{i}(S, k)$ is a direct summand of $Tor^{R}_{i}(R^{+}, k)$ and is hence equal to $0$. Since $S$ is a finitely generated module over a noetherian local ring $R$, standard theory of noetherian local rings implies $S$ has finite flat dimension and hence finite projective dimension. Since $R$ has dimension $2$ and $S$ has depth $2$ the statement of the Auslander-Buschbaum theorem implies that the projective dimension of $S$ is $0$ and hence $S$ is flat over $R$.
\end{proof}

The reader can guess that the above statement allows us to reduce the statement of the main theorem about the large non-noetherian object $R^{+}$ to a finitistic statement about finite normal covers of $R$. That is, it is not hard to see that the above lemma implies that if we have a ring $R$ satisfying the assumptions of the main theorem (that is if $Tor^{R}_{i}(R^{+}, k) = 0$ and $dim(R) = 2$) then every normal extension of $R$ is flat over $R$.

Let $P$ be a property of rings. $P$ is said to descend under faithfully flat maps if for a faithfully flat map of noetherian local  rings $R\rightarrow S$, $S$ is $P$ implies that $R$ is $P$. Many singularities and properties of rings descend under faithfully flat maps and we refer the reader to the work of Datta and Murayama \cite{DM20} for an excellent exposition surrounding this topic. We will need that the properties of a ring being normal, Gorenstein, and regular descend along faithfully maps.

\begin{theorem}
Let $R \rightarrow S$ be a faithfully flat map of noetherian local rings. If $S$ is normal so is $R$.
\end{theorem}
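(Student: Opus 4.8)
The plan is to verify Serre's criterion for normality directly on $R$: since $R$ is noetherian it suffices to show that $R$ satisfies $(R_1)$ and $(S_2)$, and as $R$ is local this forces $R$ to be a normal domain (note $R\hookrightarrow S$ by faithful flatness, so $R$ is already a domain because $S$ is). Both conditions will be transported prime by prime from $S$ to $R$ through the fibres of $\Spec S\to\Spec R$.

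Fix $\fp\in\Spec R$. Flatness gives going-down and surjectivity of $\Spec S\to\Spec R$, so we may pick a prime $\fq$ of $S$ minimal over $\fp S$; going-down then forces $\fq\cap R=\fp$. The induced map $R_{\fp}\to S_{\fq}$ is local and faithfully flat, and minimality of $\fq$ over $\fp S$ makes the closed fibre $S_{\fq}/\fp S_{\fq}$ artinian, hence of dimension and depth $0$. The dimension and depth formulas for flat local homomorphisms then give
\[
\dim S_{\fq}=\dim R_{\fp}, \qquad \operatorname{depth} S_{\fq}=\operatorname{depth} R_{\fp}.
\]

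With this in hand, $(S_2)$ descends: applying the $(S_2)$ property of $S$ at $\fq$ yields $\operatorname{depth} S_{\fq}\ge\min\{2,\dim S_{\fq}\}$, hence $\operatorname{depth} R_{\fp}\ge\min\{2,\dim R_{\fp}\}$, and $\fp$ was arbitrary. For $(R_1)$, suppose $\dim R_{\fp}\le 1$; then $\dim S_{\fq}\le 1$, and $S$ being normal makes $S_{\fq}$ regular. To descend regularity along $R_{\fp}\to S_{\fq}$, put $\kappa=\kappa(\fp)$ and use flat base change to identify $Tor_{i}^{R_{\fp}}(\kappa,\kappa)\otimes_{R_{\fp}}S_{\fq}$ with $Tor_{i}^{S_{\fq}}(S_{\fq}/\fp S_{\fq},\,S_{\fq}/\fp S_{\fq})$; the latter vanishes for $i\gg 0$ since $S_{\fq}$ is regular (so $S_{\fq}/\fp S_{\fq}$ has finite projective dimension), and faithful flatness of $S_{\fq}$ over $R_{\fp}$ then forces $Tor_{i}^{R_{\fp}}(\kappa,\kappa)=0$ for $i\gg 0$, i.e.\ $R_{\fp}$ is regular by Auslander--Buchsbaum--Serre. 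Thus $R$ is $(R_1)$, and Serre's criterion finishes the proof.

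The only points needing care are the bookkeeping with the fibre — choosing $\fq$ minimal over $\fp S$ so that the fibre is artinian and the dimension/depth identities hold without correction terms — and the descent of regularity along a faithfully flat local map; both are standard, and indeed the whole statement can alternatively be quoted from the general machinery of faithfully flat descent of ring-theoretic properties, for which we refer to \cite{DM20}. I do not anticipate a genuine obstacle.
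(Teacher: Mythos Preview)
Your argument is correct: choosing $\fq$ minimal over $\fp S$ so that the fibre is artinian, reading off the equalities $\dim S_{\fq}=\dim R_{\fp}$ and $\operatorname{depth} S_{\fq}=\operatorname{depth} R_{\fp}$, and then descending $(S_2)$ and $(R_1)$ (the latter via the flat base-change identification of $Tor$ and Auslander--Buchsbaum--Serre) is exactly the standard route. The paper itself gives no argument at all---it simply cites \cite[Cor.\ to Thm.~23.9]{Mat89}---so your write-up is just an unpacking of that reference rather than a different approach.
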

\begin{proof}
\cite[Cor. to Thm. 23.9]{Mat89}.
\end{proof}

\begin{theorem}
Let $R \rightarrow S$ be a faithfully flat map of noetherian local rings. If $S$ is Gorenstein so is $R$.
\end{theorem}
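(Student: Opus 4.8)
The plan is to descend the Gorenstein property through the flat base-change isomorphism for $\Ext$, using Bass's homological characterization of Gorenstein local rings. Note first that a faithfully flat homomorphism between local rings is automatically local: surjectivity of $\Spec S\to\Spec R$ produces a prime of $S$ contracting to $\fm$, which must then lie inside the maximal ideal $\fn$ of $S$, forcing $\fm S\subseteq\fn$ and in particular $S/\fm S\neq 0$. Recall also that a noetherian local ring $(R,\fm,k)$ is Gorenstein if and only if it has finite injective dimension over itself, and that this in turn holds if and only if $\Ext^i_R(k,R)=0$ for all $i\gg 0$. So it suffices to prove $\Ext^i_R(k,R)=0$ for $i$ large.

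Now I would run the base change. Since $k$ is finitely generated, hence finitely presented, over the noetherian ring $R$, and $S$ is $R$-flat, there is for each $i$ a natural isomorphism
\[
\Ext^i_R(k,R)\otimes_R S\;\cong\;\Ext^i_S\bigl(k\otimes_R S,\; R\otimes_R S\bigr)\;=\;\Ext^i_S\bigl(S/\fm S,\;S\bigr).
\]
Because $S$ is Gorenstein it has finite injective dimension over itself, say $n$; hence $\Ext^i_S(M,S)=0$ for every $S$-module $M$ and every $i>n$, and in particular $\Ext^i_S(S/\fm S,S)=0$ for $i>n$. Thus $\Ext^i_R(k,R)\otimes_R S=0$ for $i>n$, and since $R\to S$ is faithfully flat an $R$-module that vanishes after $\otimes_R S$ was already zero, so $\Ext^i_R(k,R)=0$ for $i>n$. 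By the characterization recalled above, $R$ is Gorenstein.

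I expect no serious obstacle: the proof is an assembly of standard inputs — the $\Ext$ flat base-change formula (valid since $k$ is finitely presented over the noetherian ring $R$), faithful flatness, and Bass's theorem equating the Gorenstein property with finite self-injective dimension and with eventual vanishing of $\Ext^{\bullet}_R(k,R)$. The one point worth stating explicitly is the locality of the homomorphism, which guarantees $S/\fm S\neq 0$. Should one wish to avoid the eventual-vanishing criterion, an alternative is to first descend Cohen--Macaulayness (using $\operatorname{depth}R+\operatorname{depth}(S/\fm S)=\operatorname{depth}S=\dim S=\dim R+\dim(S/\fm S)$, which forces $\operatorname{depth}R=\dim R$) and then compute the type of $R$: if $\underline x$ is a system of parameters of $R$ that is a regular sequence on both $R$ and $S$, flat base change identifies $\Soc(R/\underline xR)\otimes_R S$ with $\Hom_S(S/\fm S,\,S/\underline xS)$, which is the canonical module of the Cohen--Macaulay ring $S/\fm S$ since $S/\underline xS$ is Gorenstein; localizing at a minimal prime of $S/\fm S$ and comparing lengths then forces the type of $R$ to be $1$, so $R$ is Gorenstein.
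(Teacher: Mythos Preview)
Your argument is correct: the flat base-change isomorphism for $\Ext$ combined with Bass's criterion is the standard route, and the preliminary observation that faithful flatness forces locality is exactly what is needed to make the argument go through. The paper itself does not supply a proof but simply cites \cite[Cor.\ to Thm.\ 23.4]{Mat89}; your write-up is essentially the argument behind that reference.
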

\begin{proof}
\cite[Cor. to Thm. 23.4]{Mat89}.
\end{proof}

\begin{theorem}
Let $R \rightarrow S$ be a faithfully flat map of noetherian local rings. If $S$ is regular so is $R$.
\end{theorem}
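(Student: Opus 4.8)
The plan is to reduce to the homological characterization of regularity: by the Auslander--Buchsbaum--Serre theorem, a noetherian local ring is regular precisely when its residue field has finite projective dimension (equivalently, when it has finite global dimension). Writing $(R,\mathfrak{m},k)\to(S,\mathfrak{n},\ell)$ for the given map, it therefore suffices to prove that $\operatorname{pd}_R k$ is finite, knowing that $S$ --- being regular --- has finite global dimension, namely $\dim S$.

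The key intermediate step I would establish is that a flat \emph{local} homomorphism preserves minimal free resolutions. Concretely: if $M$ is a finitely generated $R$-module with minimal free resolution $F_\bullet\to M$, then, since $S$ is flat over $R$, the base-changed complex $F_\bullet\otimes_R S$ is still exact in positive degrees and resolves $M\otimes_R S$; and since the map is local, the entries of the differentials of $F_\bullet$, which lie in $\mathfrak{m}$, are carried into $\mathfrak{m}S\subseteq\mathfrak{n}$, so $F_\bullet\otimes_R S$ is again a minimal free resolution. Faithful flatness guarantees $M\otimes_R S\neq 0$ whenever $M\neq 0$, so comparing the lengths of these minimal resolutions gives $\operatorname{pd}_S(M\otimes_R S)=\operatorname{pd}_R M$.

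Applying this with $M=k$ yields $\operatorname{pd}_S(S/\mathfrak{m}S)=\operatorname{pd}_R k$, and the left-hand side is bounded above by the global dimension $\dim S$ of the regular ring $S$; hence $\operatorname{pd}_R k<\infty$ and $R$ is regular. Since this is classical, in the write-up I would most likely simply cite \cite[Thm.~23.7]{Mat89}, in the style of the two preceding theorems. The only point that genuinely needs care is the preservation of minimality under base change, which is exactly where both the flatness and the locality of $R\to S$ are used; there is otherwise no real obstacle.
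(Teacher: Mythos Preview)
Your proposal is correct and matches the paper's approach exactly: the paper's entire proof is the single line ``\cite[Cor.\ to Thm.\ 23.7]{Mat89}'', which is precisely the citation you anticipated (up to citing the corollary rather than the theorem itself). The homological argument you sketch---preservation of minimal free resolutions under a flat local map, combined with the Auslander--Buchsbaum--Serre criterion---is the standard proof underlying that reference, and your identification of the locality/flatness hypotheses as the place where care is needed is accurate.
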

\begin{proof}
\cite[Cor. to Thm. 23.7]{Mat89}.
\end{proof}

\begin{theorem}
Let $R \rightarrow S$ be a cyclically pure homomorphism of locally quasi-excellent $\mathbb{Q}$-algebras, in particular $R\rightarrow S$ can split or be faithfully flat. If $S$ is pseudo-rational (i.e. has rational singularities) so does $R$.
\end{theorem}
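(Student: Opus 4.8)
The statement is a form of Boutot's theorem: for a \emph{split} (equivalently, for these rings, pure) homomorphism of algebras essentially of finite type over a field of characteristic zero it is exactly Boutot's theorem that rational singularities descend, and the content here is the passage to \emph{cyclically} pure homomorphisms of arbitrary locally quasi-excellent $\mathbb{Q}$-algebras. The plan is to deduce it from the essentially-finite-type case via two reductions and one local-cohomology comparison.

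\emph{Reduction to the complete local case.} Pseudo-rationality is a condition on the local rings $R_{\fp}$, so fix $\fp \in \Spec R$. Cyclic purity gives $\fp S \cap R = \fp$, hence $\fp S \neq S$ and there is a prime $\fq$ of $S$ over $\fp$; one checks that $\fq$ may be chosen so that $R_{\fp} \to S_{\fq}$ is again cyclically pure (cyclic purity localizes along $R \setminus \fp$, and a suitable $\fq$ prevents the residual localization from enlarging contracted ideals). Thus we may assume $(R,\fm) \to (S,\fn)$ is a cyclically pure local homomorphism of quasi-excellent local $\mathbb{Q}$-algebras. Since pseudo-rationality of excellent local rings is insensitive to completion, after completing — and checking that $\widehat R \to \widehat S$ stays cyclically pure, which follows from faithful flatness of completion together with the fact that ideals of $\widehat R$ extended from $R$ are recovered by contraction — we reduce to $R,S$ complete local $\mathbb{Q}$-algebras; these are excellent and carry coefficient fields by Cohen's structure theorem.

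\emph{The Boutot argument.} Normality (and Cohen--Macaulayness and analytic unramifiedness) of $R$, which are part of the pseudo-rationality package, descend along the cyclically pure map $R \to S$ — a cyclically pure subring of a reduced, resp.\ $(S_2)$, resp.\ $(R_1)$ Noetherian ring inherits the property — or are subsumed in the cohomological comparison below. Resolution of singularities being available for quasi-excellent $\mathbb{Q}$-schemes, choose a resolution $Z \to \Spec S$ dominating the base change to $\Spec S$ of a resolution $Y \to \Spec R$, so that there is a morphism $Z \to Y$ over $\Spec S \to \Spec R$ carrying the closed fibre $E_Z$ into the closed fibre $E_Y$. By the Lipman--Teissier criterion (which invokes Grauert--Riemenschneider vanishing and Grothendieck duality), pseudo-rationality of $R$ amounts to injectivity of $H^d_{\fm}(R) \to H^d_{E_Y}(\cO_Y)$ with $d = \dim R$. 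Now the composite
\[
H^d_{\fm}(R) \longrightarrow H^d_{\fm}(S) \longrightarrow H^d_{E_Z}(\cO_Z)
\]
is injective: the first map is injective because $H^d_{\fm}(R) = \varinjlim_n R/(x_1^n,\dots,x_d^n)$ for a system of parameters $x_1,\dots,x_d$, filtered colimits preserve monomorphisms, and each $R/(x_1^n,\dots,x_d^n) \to S/(x_1^n,\dots,x_d^n)S$ is injective by cyclic purity; the second map is injective by pseudo-rationality of $S$. Since this composite factors through $H^d_{E_Y}(\cO_Y)$ via the comparison along $Z \to Y$, the desired injectivity for $R$ follows.

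\emph{Main obstacle.} The genuine work lies in the two upgrades over the classical theorem: (i) carrying out the reduction to the complete local case while preserving \emph{cyclic} purity — localizing at an arbitrary prime over $\fp$ need not preserve it, and one must also keep the dimensions of $R_{\fp}$ and $S_{\fq}$ under control so that the top-degree local-cohomology comparison is the right one — and (ii) replacing Boutot's use of resolution and Grauert--Riemenschneider over $\mathbb{C}$ by their quasi-excellent, equal-characteristic-zero counterparts, and verifying that cyclic purity (rather than the a priori stronger splitting or purity) already suffices to transport the top-local-cohomology injectivity. Granting these, the argument is Boutot's, and in practice one invokes the version of Boutot's theorem already established in this generality.
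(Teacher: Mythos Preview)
The paper does not prove this statement at all: it is listed among the preliminaries and the ``proof'' is simply a citation to \cite[Theorem C]{Mur21}, with the remark that the result under stronger hypotheses is due to Boutot and Schoutens. So there is no argument in the paper to compare against.

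Your proposal goes well beyond what the paper does, in that you sketch the actual strategy behind Murayama's theorem (reduce to the complete local case, use the Lipman--Teissier injectivity criterion, and push the top local cohomology of $R$ into that of a resolution of $S$). This is the right shape of argument, and your ``Main obstacle'' paragraph correctly isolates where the real work lies. That said, the sketch as written does not close those gaps: you do not establish that cyclic purity survives the localization and completion steps, you do not arrange $\dim R_{\fp}=\dim S_{\fq}$ (so the claim that $H^d_{\fm}(S)\to H^d_{E_Z}(\cO_Z)$ is injective ``by pseudo-rationality of $S$'' is not justified---pseudo-rationality of $S$ controls degree $\dim S$, not degree $d=\dim R$), and you invoke Grauert--Riemenschneider for quasi-excellent $\mathbb{Q}$-schemes without comment. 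Your own final sentence---that in practice one invokes the version of Boutot's theorem already established in this generality---is exactly what the paper does, and is the honest resolution here.

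In short: the paper treats this as a black box; your outline is a reasonable gloss on how that black box is built, but it is not a self-contained proof, and both you and the paper ultimately defer to \cite{Mur21}.
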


\begin{proof}
This is \cite[Theorem C]{Mur21} and the theorem under stronger assumptions is due to Boutot (1987) and Schoutens (2008) and the reader can see Murayama's work for references.
\end{proof}

It is striking that our proof of a statement about the absolute integral closure or finite covers involves rational singularities which are defined using resolutions of singularities, which is a proper birational map and in particular very far from being a finite map. 

\begin{definition}
Let $X$ be an excellent scheme which admits a dualizing complex and let $f: Y \rightarrow X$ be a resolution of singularities, that is $Y$ is non-singular and the map $f: Y \rightarrow X$ is proper birational. $X$ is said to have (resolution) rational singularities if $R_{\ast}f(\mathcal{O}_{Y})\simeq \mathcal{O}_{X}$ and $f_{\ast}\mathcal{O}_{Y} \simeq \mathcal{O}_{X}$ .
\end{definition}

Rational singularities are one of the most widely studied singularities in singularity theory. Indeed to quote Kovacs, ``rational singularities are arguably one of the most mild and useful classes of singularities one can imagine". We will need criteria for determining when certain graded rings or cones over smooth projective varieties have rational singularities. 

\begin{theorem}
Let $Y$ be a smooth projective variety of dimension $n$ and let $L$ be an ample line bundle. The affine cone over $Y$ with conormal $L$ is the affine algebraic variety     
\begin{align*}
    X = Spec\bigoplus_{m\geq 0}H^{0}(Y, L^{m})
\end{align*}
$X$ has rational singularities if and only if $H^{i}(Y, L^{m}) = 0$ for every $i>0$ and for every $m \geq0$.
\end{theorem}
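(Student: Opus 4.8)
The plan is to produce an explicit resolution of singularities of $X$ and to read off rationality of $X$ from its higher direct images. Write $S=\bigoplus_{m\ge 0}H^{0}(Y,L^{m})$, so $X=\Spec S$ is an integral affine variety with vertex $v\in X$ cut out by the irrelevant ideal $S_{+}=\bigoplus_{m\ge 1}S_{m}$, and $X\setminus\{v\}$ is a $\mathbf{G}_m$-bundle over $Y=\operatorname{Proj}S$. Let $p\colon\mathbb{L}\to Y$ be the total space of the line bundle $L^{-1}$, equivalently $\mathbb{L}=\underline{\Spec}_{Y}\bigl(\bigoplus_{m\ge 0}L^{m}\bigr)$; since $Y$ is smooth, so is $\mathbb{L}$. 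The identification $S=\Gamma\bigl(Y,\bigoplus_{m\ge 0}L^{m}\bigr)$ induces a canonical morphism $\pi\colon\mathbb{L}\to X$ which collapses the zero section $Y_{0}\cong Y$ onto $v$ and restricts to an isomorphism $\mathbb{L}\setminus Y_{0}\xrightarrow{\ \sim\ }X\setminus\{v\}$. Granting for the moment that $\pi$ is proper (see below), $\pi$ is then a resolution of singularities of $X$, and by the definition of rational singularities it suffices to determine when $R\pi_{*}\mathcal{O}_{\mathbb{L}}\simeq\mathcal{O}_{X}$, i.e.\ when $\pi_{*}\mathcal{O}_{\mathbb{L}}=\mathcal{O}_{X}$ and $R^{i}\pi_{*}\mathcal{O}_{\mathbb{L}}=0$ for all $i>0$.

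Next I would compute these sheaves by a two-step Leray argument over the affine base. Since $p\colon\mathbb{L}\to Y$ is an affine morphism with $p_{*}\mathcal{O}_{\mathbb{L}}=\bigoplus_{m\ge 0}L^{m}$, and cohomology commutes with direct sums on the Noetherian scheme $Y$, one gets
\[
H^{i}(\mathbb{L},\mathcal{O}_{\mathbb{L}})\;=\;\bigoplus_{m\ge 0}H^{i}(Y,L^{m})\qquad\text{for all }i\ge 0.
\]
On the other hand $X$ is affine, so the Leray spectral sequence $H^{p}\bigl(X,R^{q}\pi_{*}\mathcal{O}_{\mathbb{L}}\bigr)\Rightarrow H^{p+q}(\mathbb{L},\mathcal{O}_{\mathbb{L}})$ collapses onto its $p=0$ column and yields $H^{i}(\mathbb{L},\mathcal{O}_{\mathbb{L}})=\Gamma\bigl(X,R^{i}\pi_{*}\mathcal{O}_{\mathbb{L}}\bigr)$ for every $i\ge 0$. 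In degree $i=0$ this gives $\Gamma(X,\pi_{*}\mathcal{O}_{\mathbb{L}})=\bigoplus_{m\ge 0}H^{0}(Y,L^{m})=S=\Gamma(X,\mathcal{O}_{X})$, whence $\pi_{*}\mathcal{O}_{\mathbb{L}}=\mathcal{O}_{X}$ automatically. For $i>0$ the sheaf $R^{i}\pi_{*}\mathcal{O}_{\mathbb{L}}$ is supported on the single closed point $v$ (because $\pi$ is an isomorphism over $X\setminus\{v\}$), hence vanishes if and only if its module of global sections $\bigoplus_{m\ge 0}H^{i}(Y,L^{m})$ is zero, i.e.\ if and only if $H^{i}(Y,L^{m})=0$ for every $m\ge 0$. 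Letting $i$ range over all positive integers yields exactly the claimed equivalence.

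The one point that genuinely requires work — and where I expect to spend the bulk of the effort — is the properness (hence birationality) of $\pi\colon\mathbb{L}\to X$ \emph{without} assuming $L$ very ample or $S$ generated in degree one. When $L$ is very ample and $Y$ is projectively normal in the associated embedding, $\pi$ is literally the blow-up of $X$ at $v$ and there is nothing to prove; in general I would verify properness by the valuative criterion: a map from a DVR $R$ into $X$ together with a lift over its generic point determines, via $p$ and the properness of $Y$, a morphism $\Spec R\to Y$, along which the desired lift to $\mathbb{L}$ is encoded by a single element of $\Frac R$, and one checks this element lies in $R$ using compatibility with $X$ and the ampleness of $L$ (take a global section of some $L^{m}$ not vanishing at the center of the valuation). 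A secondary point is that the notion of rational singularities used above does not depend on the chosen resolution, so that testing it on $\mathbb{L}$ is legitimate; this is standard. Everything else is formal manipulation of quasi-coherent cohomology over the affine base $X$.
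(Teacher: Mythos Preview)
The paper does not give its own proof of this statement: it simply records it as classical and cites \cite[Prop.~3.13]{Kol13}. Your proposal, by contrast, supplies the standard argument behind that citation, and it is correct. The resolution by the total space $\mathbb{L}$ of $L^{-1}$, the identification $p_{*}\mathcal{O}_{\mathbb{L}}=\bigoplus_{m\ge 0}L^{m}$, and the collapse of the Leray spectral sequence over the affine base are exactly the ingredients used in Koll\'ar's treatment. Your observation that $\pi_{*}\mathcal{O}_{\mathbb{L}}=\mathcal{O}_{X}$ holds automatically (equivalently, that $X$ is normal) is right and worth making explicit: it follows because $S$ is \emph{defined} as the full section ring, so $\Gamma(X,\pi_{*}\mathcal{O}_{\mathbb{L}})=S$ on the nose, and quasi-coherence over the affine $X$ finishes it.

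The one place you correctly flag as needing care---properness of $\pi$ when $L$ is merely ample---is a genuine point, and your valuative-criterion sketch is the right approach. An alternative, slightly slicker route (and the one implicit in most references) is to note that $\mathbb{L}=\underline{\Spec}_{Y}(\bigoplus_{m\ge 0}L^{m})$ sits as a closed subscheme of $Y\times X$ via the natural map to $Y\times_{\Spec k}\Spec S$, and then properness of $\pi$ follows from properness of $Y$ over $\Spec k$ by base change and composition with a closed immersion. Either way, once properness is in hand the rest of your argument is clean. Independence of the resolution is, as you say, standard in characteristic zero.
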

\begin{proof}
This is classical, see \cite[Prop. 3.13]{Kol13}.
\end{proof}

\begin{theorem}\label{thm:Wat}
Let $R$ be a normal $\mathbb{N}$-graded ring which is finitely generated over a field $R_{0} = K$ of characteristic zero. The $a$-invariant of $R$ is the highest integer a such that $[H^{d}_{m}(R)]_{a}$ is nonzero. Then $R$ has rational singularities if and only if the open set $Spec(R) - {m_{R}}$ has only rational singularities and the ring $R$ is Cohen-Macaulay with $a(R) < 0$.
\end{theorem}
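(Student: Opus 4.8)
The plan is to translate rational singularities of $X := \Spec R$ and of its punctured spectrum $U := \Spec R \setminus \{\mathfrak{m}\}$ into cohomological vanishing on the projective variety $Y := \operatorname{Proj} R$, via a resolution of $X$ built out of a resolution of $Y$. Write $d = \dim R$; the cases $d \le 1$ are trivial ($R$ is then regular and all four conditions hold), so assume $d \ge 2$. First I would reduce to the case where $R$ is generated in degree $1$ by replacing $R$ with a Veronese subring $R^{(e)} = \bigoplus_{n \ge 0} R_{ne}$: this is a normal direct summand of $R$ (indeed $R^{(e)} = R^{\mu_e}$), one has $\operatorname{Proj} R^{(e)} = \operatorname{Proj} R$, and the finite morphism $\Spec R \to \Spec R^{(e)}$ carries the punctured spectrum of $R$ onto that of $R^{(e)}$; using Boutot's theorem for the rational-singularity clauses and tracking degrees for the $a$-invariant, one checks that each of the four assertions holds for $R$ if and only if it holds for $R^{(e)}$. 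This reduction is routine and I would not belabour it. So assume $R$ is generated in degree $1$, so that $L := \mathcal{O}_Y(1)$ is an ample line bundle on the normal projective variety $Y$ of dimension $d-1$ and the natural morphism $q : U \to Y$ is an affine $\mathbb{G}_m$-torsor with $q_*\mathcal{O}_U = \bigoplus_{n \in \mathbb{Z}} L^n$.

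Two dictionaries. \emph{(Local cohomology.)} Since $q$ is affine, $H^i(U,\mathcal{O}_U) = \bigoplus_n H^i(Y,L^n)$; plugging this into the exact triangle $R\Gamma_{\mathfrak{m}}(R) \to R\Gamma(\Spec R,\mathcal{O}) \to R\Gamma(U,\mathcal{O}_U)$ and using that normality (with $d \ge 2$) forces $H^0_{\mathfrak{m}}(R) = H^1_{\mathfrak{m}}(R) = 0$, one gets $R = \bigoplus_n H^0(Y,L^n)$ and $H^j_{\mathfrak{m}}(R)_n \cong H^{j-1}(Y,L^n)$ for all $j \ge 2$ and all $n \in \mathbb{Z}$. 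Hence $R$ is Cohen--Macaulay if and only if $H^p(Y,L^n)=0$ for $1 \le p \le d-2$ and all $n \in \mathbb{Z}$, and $a(R) < 0$ if and only if $H^{d-1}(Y,L^n)=0$ for all $n \ge 0$ (recall $\dim Y = d-1$). \emph{(Resolutions.)} Fix a resolution $g : \widetilde{Y} \to Y$ (Hironaka). Flat base change along $q$ shows that $\widetilde{U} := \widetilde{Y}\times_Y U \to U$ is a resolution of $U$ with $R^i(\widetilde{U}\to U)_*\mathcal{O}_{\widetilde{U}} = q^*R^ig_*\mathcal{O}_{\widetilde{Y}}$; since $q$ is faithfully flat, $U$ has rational singularities if and only if $R^ig_*\mathcal{O}_{\widetilde{Y}}=0$ for all $i \ge 1$ (condition (I)), equivalently, $Y$ has rational singularities.

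The crux is a resolution of $X$ itself. Let $V$ be the total space of the line bundle $(g^*L)^{-1}$ over $\widetilde{Y}$: it is smooth, the projection $f : V \to \widetilde{Y}$ is affine with $f_*\mathcal{O}_V = \bigoplus_{n \ge 0}(g^*L)^n$, and there is a natural proper birational morphism $\pi : V \to X$ factoring through the blow-up of the vertex of $X$ (the latter being the total space of $L^{-1}$ over $Y$); thus $\pi$ is a resolution of $X$. Since $f$ is affine, $\pi_*\mathcal{O}_V = \bigoplus_{n \ge 0}H^0(\widetilde{Y},(g^*L)^n) = \bigoplus_{n\ge 0}H^0(Y,L^n) = R$ (by the projection formula and $g_*\mathcal{O}_{\widetilde{Y}} = \mathcal{O}_Y$), and
\[
R^i\pi_*\mathcal{O}_V \;=\; H^i(V,\mathcal{O}_V) \;=\; \bigoplus_{n \ge 0} H^i\bigl(\widetilde{Y},(g^*L)^n\bigr) \qquad (i \ge 1)
\]
as graded $R$-modules. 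So $X$ has rational singularities if and only if $H^i(\widetilde{Y},(g^*L)^n)=0$ for all $i \ge 1$ and $n \ge 0$, and I claim this is equivalent to (I) together with condition (II): $H^p(Y,L^n)=0$ for all $p \ge 1$ and $n \ge 0$. Indeed, under (I) the projection formula gives $Rg_*(g^*L)^n \simeq L^n$, so the Leray spectral sequence collapses and $H^i(\widetilde{Y},(g^*L)^n)=H^i(Y,L^n)$, which (II) kills; conversely, Serre vanishing and ampleness of $L$ yield $H^i(\widetilde{Y},(g^*L)^n)=H^0(Y,(R^ig_*\mathcal{O}_{\widetilde{Y}})\otimes L^n)$ for $n \gg 0$, and its vanishing for all $i \ge 1$ and $n \gg 0$ forces (I), after which the collapsed sequence turns the hypothesis into (II).

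Matching with the statement is then immediate. If $X$ has rational singularities, then (I) and (II) hold: (I) says $U$ has rational singularities; the case $p = d-1$ of (II) gives $a(R) < 0$ by the local-cohomology dictionary; and $R$ is Cohen--Macaulay since in characteristic zero (pseudo-)rational singularities are Cohen--Macaulay. Conversely, if $U$ has rational singularities, $R$ is Cohen--Macaulay, and $a(R) < 0$, then (I) holds, and combining the two halves of the local-cohomology dictionary gives $H^p(Y,L^n)=0$ for $1 \le p \le d-1 = \dim Y$ and all $n \ge 0$, i.e. (II); hence $X$ has rational singularities. The step I expect to be the genuine obstacle is the bookkeeping around $\operatorname{Proj}$: making the Veronese reduction honest so that all four conditions transfer, and verifying carefully that the total space of $(g^*L)^{-1}$ over $\widetilde{Y}$ really is a resolution of $\Spec R$ with the asserted structure-sheaf cohomology. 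Everything else is formal, relying only on Hironaka's resolution of singularities, Serre vanishing, flat base change, Boutot's theorem, and the implication ``rational $\Rightarrow$ Cohen--Macaulay''.
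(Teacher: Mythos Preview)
The paper does not actually prove this theorem: its ``proof'' is the single sentence ``This is (by now) classical, see \cite[Theorem 2.2]{Wat83}.'' The result is quoted as background, not established.

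Your proposal, by contrast, supplies an honest proof, and it is essentially the classical one (reduce to the standard-graded case, pass to $Y=\operatorname{Proj}R$ with its ample $L$, translate local cohomology of $R$ into sheaf cohomology $H^i(Y,L^n)$, and resolve the cone $X=\Spec R$ by the total space of $(g^*L)^{-1}$ over a resolution $\widetilde Y\to Y$). The logical skeleton is correct: the identification $R^i\pi_*\mathcal O_V\cong\bigoplus_{n\ge0}H^i(\widetilde Y,(g^*L)^n)$ as $R$-modules is the heart of the matter, and your Serre-vanishing argument to recover condition~(I) from the vanishing of these groups is the standard trick. The two places you flag as needing care---the Veronese transfer of all four conditions, and the verification that $V\to X$ is a genuine resolution with the stated pushforwards---are indeed the only non-formal steps; in particular, the direction ``$R^{(e)}$ has rational singularities $\Rightarrow$ $R$ does'' is not an immediate consequence of Boutot and deserves a line (one typically uses that $\Spec R\to\Spec R^{(e)}$ is \'etale away from the vertex together with the punctured-spectrum criterion you are proving, or argues directly on the resolution). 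With those details filled in, your argument is a complete proof of Watanabe's theorem, which is strictly more than the paper offers.
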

\begin{proof}
This is (by now) classical, see \cite[Theorem 2.2]{Wat83}
\end{proof}

Our main results naturally tie in with the notion of dagger closure. Dagger closure was defined as an attempt to provide a characteristic free ideal closure operation with properties similar to tight closure. Hochster-Huneke \cite[Page 236]{HH91} remark that a priori, each valuation may give a different dagger closure, and show that this is not the case if $R$ has positive characteristic by proving that dagger closure agrees with tight closure (\cite[Theorem 3.1]{HH91}) for complete local domains (see \cite[Proposition 1.8]{BS12} where it is shown that dagger closure and tight closure agree for domains essentially of finite type over an excellent local ring). We do not know if dagger closure depends on the choice of the valuation if $R$ has equicharacteristic zero or mixed characteristic. Although tight closure has been studied extensively in positive characteristic, dagger closure continues to be mysterious\footnote{Some progress towards understanding dagger closure has been made by St\"{a}bler in his thesis work \cite{Sta10}.} and we refer the reader to \cite[Questions 4.1, 4.2]{RSS07} for some basic questions about dagger closure that remain unanswered. The results of this note can be viewed as an application of the theory of dagger closure.

\begin{citeddef}[{\citeleft\citen{BS12}\citeright}]
Let $R$ be a domain and $I$ an ideal. Then an element $f$ belongs to the dagger closure $I^{\dagger}$ of $I$ if for every valuation $v$ of rank at most one on $R^{+}$ and every positive $\epsilon$ there exists $a \in R^{+}$ with $ v(a) < \epsilon$ and $af \in IR^{+}$.
\end{citeddef}

This definition of dagger closure is slightly more general than the original definition due to Hochster and Huneke \cite{HH91}. In \cite{HH91} dagger closure is defined only for complete local domains and under this definition both coincide since both coincide with tight closure in positive characteristic. 

\begin{citeddef}[{\citeleft\citen{BS12}\citeright}] 
Let $R$ be a $\mathbb{Q}$-graded domain. The map $ v \colon (R - {0}) \rightarrow Q$ sending
$f \in (R - {0})$ to deg $f_{i}$, where $f_{i}$ is the minimal homogeneous component of $f$ induces a valuation on $R$ with values in $Q$. This valuation will be referred to as the valuation induced by the grading.
\end{citeddef}

\begin{citeddef}[{\citeleft\citen{BS12}\citeright}] 
Let $R$ denote an $\mathbb{N}$-graded domain and let $I$ be an ideal of $R$. Let $v$ be the valuation on $R^{+GR}$ induced by the grading on $R$. Then an element $f$ belongs to the graded dagger closure $I^{\dagger GR}$ of an ideal $I$ if for all positive $\epsilon$ there
exists an element $a \in R^{+GR}$ with $v(a) < \epsilon$ such that $af \in IR^{+GR}$. If $R$ is not a
domain we say that $f \in I^{\dagger GR}$ if $f \in (IR/P)^{\dagger GR}$ for all minimal primes $P$ of $R$ (this is well defined since minimal primes are homogeneous).
\end{citeddef}

We review basics about the notions of $I$-adic separatedness.
\begin{definition}
Let $R$ be a ring, $I$ be an ideal, and $M$ be an $R$-module. Then $M$ is said to be \textbf{$I$-adically separated} if $\cap_{n\geq 1}I^{n}M = 0$.
\end{definition}
If $R$ is noetherian and $M$ is finitely generated and if $I$ is a proper ideal then $M$ is $I$-adically separated. Hence this notion is particulary interesting for non-noetherian modules.

This notion ($m$-adically separated) is significantly weaker than the notion of $m$-adically ideal separatedness. For example it is proved by Hochster \cite{Ho02} (see also \cite[Lemma 4.2]{Shi10}) that $R^{+}$ is $m$-adically separated but as is discussed in Section 3, $R^{+}$ being $m$-adically ideal separated (in positive characteristic) would give another proof of \ref{BIMpm} which apriori uses the deep Cohen-Macaulayness of $R^{+}$. 
\begin{definition}
Let $R$ be a noetherian ring, $J$ be an ideal, and $M$ be an $R$-module. Then $M$ is said to be \textbf{$J$-adically ideal(wise) separated} if $M\otimes I$ is $J$-adically separated for every ideal $I$ of $R$.
\end{definition}

The significance of this definition is in the following theorem known as `Local criterion for flatness'. The local criterion of flatness helps in characterizing etalé morphisms.

\begin{theorem}
Let $A$ be a noetherian ring, $I$ an ideal, and $M$ an $I$-adically ideal separated module. If $Tor_{1}^{A}(A/I, M) = 0$ then $M$ is flat as an $A$ module.
\end{theorem}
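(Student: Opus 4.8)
This is the classical \emph{local criterion of flatness}, so the plan is to invoke it in the form proved in \cite[Theorem~22.3]{Mat89} (the same statement, with essentially this proof, appears in Bourbaki's \emph{Alg\`ebre commutative}, Ch.~III, and in the Stacks project); I record the argument for completeness. The usual formulations also include the hypothesis that $M/IM$ be flat over $A/I$ --- harmless here, since in the situations of interest $I$ is maximal and $A/I$ is a field, so $M/IM$ is automatically flat over $A/I$; I use this freely below.

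First I would reduce to a statement about ideals: since $A$ is Noetherian, $M$ is flat over $A$ if and only if $Tor_{1}^{A}(A/\mathfrak a, M) = 0$ for every ideal $\mathfrak a \subseteq A$, equivalently the canonical surjection $\mathfrak a \otimes_{A} M \twoheadrightarrow \mathfrak a M$ is injective for every $\mathfrak a$. The hypothesis $Tor_{1}^{A}(A/I, M) = 0$ is, upon tensoring $0 \to I \to A \to A/I \to 0$ with $M$, precisely the assertion that $I \otimes_{A} M \to M$ is injective, i.e.\ $I \otimes_{A} M \xrightarrow{\ \sim\ } IM$.

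The heart of the proof is to propagate this single vanishing up the $I$-adic filtration. Using the change-of-rings spectral sequence $Tor_{p}^{A/I}\big(N, Tor_{q}^{A}(A/I, M)\big) \Rightarrow Tor_{p+q}^{A}(N, M)$, the vanishing of $Tor_{1}^{A}(A/I, M)$, and the flatness of $M/IM$ over $A/I$, one first obtains $Tor_{1}^{A}(N, M) = 0$ for \emph{every} $A/I$-module $N$. Feeding this into the short exact sequences $0 \to I^{n}/I^{n+1} \to A/I^{n+1} \to A/I^{n} \to 0$ and inducting on $n$ yields $Tor_{1}^{A}(A/I^{n}, M) = 0$ for all $n$ and, equivalently, that $\operatorname{gr}_{I}(A) \otimes_{A/I} (M/IM) \to \bigoplus_{n \ge 0} I^{n}M/I^{n+1}M$ is an isomorphism and each truncation $M/I^{n}M$ is flat over $A/I^{n}$ (the last needing no separatedness, since $I/I^{n}$ is nilpotent in $A/I^{n}$). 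Turning one $Tor$-vanishing into flatness of all of these truncations is where the real work lies, and I expect it to be the main obstacle.

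Finally I would conclude using separatedness. Fix an ideal $\mathfrak a$ and let $z \in \ker(\mathfrak a \otimes_{A} M \to M)$. Since $\mathfrak a \otimes_{A} (M/I^{n}M) = (\mathfrak a \otimes_{A} M)/I^{n}(\mathfrak a \otimes_{A} M)$, reducing $z$ modulo $I^{n}$ and using the flatness of $M/I^{n}M$ over $A/I^{n}$ --- so that $\bar{\mathfrak a} \otimes_{A/I^{n}} (M/I^{n}M) \hookrightarrow M/I^{n}M$, where $\bar{\mathfrak a}$ is the image of $\mathfrak a$ in $A/I^{n}$ --- together with the Artin--Rees lemma to compare $\mathfrak a \cap I^{n}$ with $I^{n}\mathfrak a$, a diagram chase shows that $z \in I^{m}(\mathfrak a \otimes_{A} M)$ for every $m \ge 1$. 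Since $M$, and therefore $\mathfrak a \otimes_{A} M$, is $I$-adically ideal separated, $\bigcap_{m \ge 1} I^{m}(\mathfrak a \otimes_{A} M) = 0$, so $z = 0$. Hence $\mathfrak a \otimes_{A} M \to M$ is injective for every ideal $\mathfrak a$, i.e.\ $M$ is flat over $A$. Ideal separatedness is used only in this last passage to the limit, but it is indispensable there: without it the kernel could be a nonzero submodule of $\mathfrak a \otimes_{A} M$ contained in every $I^{m}(\mathfrak a \otimes_{A} M)$.
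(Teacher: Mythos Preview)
The paper does not supply a proof of this statement at all; it is recorded in Section~2 as a preliminary fact (the classical local criterion for flatness) and is later invoked via the citation \cite[Theorem~22.3]{Mat89}. Your sketch is exactly the standard argument found there, so there is nothing substantive to compare.

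One remark: you are right to flag the missing hypothesis that $M/IM$ be flat over $A/I$. As literally stated in the paper the theorem is false---taking $I=0$ makes every module $I$-adically ideal separated with $Tor_1^A(A,M)=0$, yet not every module is flat. The paper only ever applies the result with $I=\mathfrak m$ the maximal ideal of a local ring, where $A/I$ is a field and the extra hypothesis is automatic, so no harm is done; your parenthetical acknowledgment of this is the correct way to handle it.
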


This is used in the proof of the following classical theorem due to Grothendieck, called `miracle flatness' by Brian Conrad.

\begin{theorem}
Let $R \rightarrow S$ be a local ring homomorphism between local Noetherian rings. If $S$ is flat over $R$, then
$dim(S) = dim(R) + dim(S/mS)$ where $m$ is the maximal ideal of $R$.
Conversely, if this dimension equality holds, if R is regular and if S is Cohen–Macaulay (e.g., regular), then S is flat over R.
\end{theorem}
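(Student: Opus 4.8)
The plan is to prove the two implications separately: the forward direction is the dimension formula for flat local homomorphisms, and the converse is the ``miracle'' proper.

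For the forward direction, set $d=\dim R$ and $e=\dim(S/\fm S)$; I want $\dim S=d+e$. To get $\dim S\le d+e$ I would pick a system of parameters $x_1,\dots,x_d$ of $R$, so that $\fm^N\subseteq(x_1,\dots,x_d)R$ for some $N$ and hence $(\fm S)^N\subseteq(x_1,\dots,x_d)S$; thus $\fm S$ and $(x_1,\dots,x_d)S$ share a radical in $S$, so $\dim\bigl(S/(x_1,\dots,x_d)S\bigr)=e$, and lifting a system of parameters of $S/(x_1,\dots,x_d)S$ to $S$ and adjoining the $x_j$ exhibits an $\fn$-primary ideal with $d+e$ generators; Krull's generalized principal ideal theorem then gives $\dim S\le d+e$. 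For the reverse inequality I would invoke the going-down property of flat local maps: starting from a saturated chain of primes of length $d$ in $R$ ending at $\fm$ and from a maximal chain (of length $e$) in $S/\fm S$, the latter lifts to a chain $\fq_0\subsetneq\cdots\subsetneq\fq_e=\fn$ of primes of $S$ with $\fq_0\supseteq\fm S$, so $\fq_0\cap R=\fm$; going-down prolongs this chain downward past $\fq_0$ to a descending chain of length $d$ with the prescribed contractions, and concatenating produces a chain of length $d+e$ in $S$.

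For the converse, suppose the dimension equality holds with $R$ regular of dimension $d$ and $S$ Cohen--Macaulay. Since $R$ is regular, $\fm=(x_1,\dots,x_d)$ for a regular system of parameters, which is in particular a regular sequence on $R$, so the Koszul complex $K_\bullet=K_\bullet(x_1,\dots,x_d;R)$ is a finite free resolution of $k=R/\fm$. Because $\fm S=(x_1,\dots,x_d)S$, the hypothesis $\dim S=d+\dim(S/\fm S)$ says $\dim\bigl(S/(x_1,\dots,x_d)S\bigr)=\dim S-d$, so $x_1,\dots,x_d$ is part of a system of parameters of $S$; as $S$ is Cohen--Macaulay, I can then conclude that $x_1,\dots,x_d$ is a regular sequence on $S$, whence $Tor_i^R(k,S)=H_i(K_\bullet\otimes_R S)=H_i\bigl(K_\bullet(x_1,\dots,x_d;S)\bigr)=0$ for all $i\ge1$, the Koszul complex on a regular sequence being acyclic in positive degrees.

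To finish I would feed this into the local criterion of flatness stated just above: $S$ is $\fm$-adically ideal separated over $R$ because for every ideal $I\subseteq R$ the module $S\otimes_R I$ is finitely generated over the noetherian local ring $S$ and $\fm S\subseteq\fn$, so $\bigcap_{n\ge1}\fm^n(S\otimes_R I)\subseteq\bigcap_{n\ge1}\fn^n(S\otimes_R I)=0$ by Krull's intersection theorem; together with $Tor_1^R(k,S)=0$ this gives flatness of $S$ over $R$. The hard part is the converse, and within it the step upgrading ``$x_1,\dots,x_d$ is part of a system of parameters of $S$'' to ``$x_1,\dots,x_d$ is a regular sequence on $S$'' --- this is exactly where the Cohen--Macaulay hypothesis on $S$ enters, and it is what makes the statement a ``miracle''.
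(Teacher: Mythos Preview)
The paper does not actually supply a proof of this statement: it records miracle flatness as a classical result of Grothendieck and only hints at the method by placing it immediately after the local criterion of flatness with the remark ``this is used in the proof of the following classical theorem.'' Your proposal is correct and aligns with that hint---you prove the forward direction by Krull's principal ideal theorem for the upper bound and going-down for flat maps for the lower bound, and then for the converse you use regularity of $R$ to resolve $k$ by the Koszul complex on a regular system of parameters, the Cohen--Macaulay hypothesis on $S$ to promote that sequence to an $S$-regular sequence, and finally the local criterion of flatness (with $\fm$-adic ideal separatedness coming from Krull's intersection theorem over $S$) to conclude. This is the standard argument (cf.\ \cite[Theorem~23.1]{Mat89}), so there is nothing to compare beyond noting that you have filled in what the paper left as a citation.
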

Let us review some basics of `fpqc analogues of splinters' in the sense of Andre and Fiorot.

\begin{definition}
Let $X$ be a noetherian affine scheme. $X$ is said to be an `fpqc analogue of splinter' if every finite cover $S$ of $X$ is a covering for the fpqc topology. Equivalently if $X = Spec(R)$ then there exists an $S$-algebra $T$ for any module finite extension $S$ of $R$ such that $T$ is flat over $R$ and $R \rightarrow S.
\rightarrow T$.
\end{definition}

It is known that a $F$-finite noetherian ring of positive characteristic is an fpqc analogue of splinter only if it is regular. This follows from Kunz's theorem. We give a brief sketch of the proof. We have a faithfully flat algebra $T$ (by assumption) such that $R\rightarrow F_{*}(R) \rightarrow T$ and by base change we have that $F_{*}(R) \rightarrow T \rightarrow F_{*}(T)$ is flat and in particular $F_{*}(R) \rightarrow T$ is pure. This implies $R\rightarrow F_{*}(R)$ is flat.  The fact that excellent regular noetherian rings (of any characteristic) are fpqc analogues of splinters is deep and follows from the existence of `Big Cohen-Macaulay algebras' in the sense of Hochster \cite{Ho75}, Andre\cite{And18}.

\section{$m$-adic ideal separatedness and vacuous assumptions}

The goal of this section is to prove that the absolute integral closure, $R^{+}$, of an excellent equi-characteristic zero domain is $m$-adically ideal separated and to show that the assumptions of the question of Bhatt, Iyengar, Ma \ref{BIMq} can be \emph{vacuous} under certain mild assumptions. That is to say that there are \emph{no} complete local domains $(R,m,k)$ of equi-characteristic zero such that $Tor_{i}^{R}(R^{+}, k) = 0$ for $i\leq dim(R) - 3$. Here we show this only when $dim(R) = 4$ and leave higher dimensions as an excercise to the reader. It is easily seen that imposing $i \geq dim(R)$ in Question \ref{BIMq} fixes this issue.

\begin{theorem}\label{intflat}
Let $R$ is an excellent (or more generally Nagata) local domain of characteristic zero. Then $R^{+}$ is $m$-adically ideal separated as an $R$-module.  
\end{theorem}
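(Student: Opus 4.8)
The plan is to unwind the colimit presentation $R^{+}=\varinjlim_{\lambda}R_{\lambda}$ over module-finite \emph{normal} extensions, reduce the asserted separatedness to a statement at the \emph{noetherian} level of the $R_{\lambda}$ (where the Artin--Rees lemma and Krull's intersection theorem are available), and use the characteristic-zero splitting lemma to transport the noetherian input across the (non-noetherian) colimit.

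First I would fix an ideal $I\subseteq R$, which is finitely generated since $R$ is noetherian, and rewrite $R^{+}$ as a directed union. Because $R$ is Nagata, the normalization of every module-finite extension domain of $R$ inside $\overline{\Frac(R)}$ is again module finite, so the module-finite normal extensions $R\subseteq R_{\lambda}\subseteq R^{+}$ are cofinal among all module-finite extensions and $R^{+}=\varinjlim_{\lambda}R_{\lambda}$. Set $M:=R^{+}\otimes_{R}I$ and $M_{\lambda}:=R_{\lambda}\otimes_{R}I$; since tensor commutes with colimits, $M=\varinjlim_{\lambda}M_{\lambda}$, and each $M_{\lambda}$ is a finitely generated $R$-module. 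The key structural input is that for $\lambda\le\mu$ the inclusion $R_{\lambda}\hookrightarrow R_{\mu}$ splits as a map of $R_{\lambda}$-modules; this is exactly the splitting lemma proved earlier, via the normalized trace, applied to the normal characteristic-zero domain $R_{\lambda}$ and its module-finite domain extension $R_{\mu}$. Applying $-\otimes_{R}I$ to a retraction $R_{\mu}\to R_{\lambda}$ yields a retraction $M_{\mu}\to M_{\lambda}$, so each transition map $M_{\lambda}\to M_{\mu}$ is split injective; in particular $M=\bigcup_{\lambda}M_{\lambda}$, and for $\lambda\le\mu$ the submodule $M_{\lambda}$ is a direct summand of $M_{\mu}$.

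Now suppose, towards a contradiction, that $0\ne m\in\bigcap_{n\ge1}\fm^{n}M$. Since $M=\bigcup_{\lambda}M_{\lambda}$ we have $m\in M_{\lambda}$ for some $\lambda$; put $L:=Rm\subseteq M_{\lambda}$, a cyclic, hence finitely generated, $R$-module. For every $\mu\ge\lambda$, writing $M_{\mu}=M_{\lambda}\oplus C_{\mu}$ with respect to the splitting above and using $L\subseteq M_{\lambda}$, one obtains $\fm^{n}M_{\mu}\cap L=\fm^{n}M_{\lambda}\cap L$ for all $n$, an intersection \emph{independent of $\mu$}. By the Artin--Rees lemma applied to $L\subseteq M_{\lambda}$ (both finitely generated over the noetherian ring $R$) there is $c\ge0$ with $\fm^{n}M_{\lambda}\cap L\subseteq\fm^{\,n-c}L$ for all $n\ge c$. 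On the other hand $\fm^{n}M=\bigcup_{\mu}\fm^{n}M_{\mu}$, so for each $n$ there is $\mu\ge\lambda$ with $m\in\fm^{n}M_{\mu}$, whence $m\in\fm^{n}M_{\mu}\cap L=\fm^{n}M_{\lambda}\cap L\subseteq\fm^{\,n-c}L$. Thus $m\in\bigcap_{k\ge1}\fm^{k}L$, which is $0$ by Krull's intersection theorem since $L$ is finitely generated over the noetherian local ring $R$. This contradicts $m\ne0$, so $\bigcap_{n}\fm^{n}(R^{+}\otimes_{R}I)=0$; as $I$ was arbitrary, $R^{+}$ is $\fm$-adically ideal separated.

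The step I expect to be the main obstacle is the stabilization $\fm^{n}M_{\mu}\cap L=\fm^{n}M_{\lambda}\cap L$: this is the only place equi-characteristic zero is used (through the trace splitting of the finite extensions), and it is precisely what lets the noetherian Artin--Rees bound survive passage to the non-noetherian colimit; everything else is bookkeeping. I would also remark that the argument never uses that $I$ is an ideal rather than an arbitrary finitely generated $R$-module, so in particular it recovers Hochster's theorem that $R^{+}$ is $\fm$-adically separated as the special case $I=R$, and that the failure of such splittings in positive and mixed characteristic is exactly why the analogous statement there remains open (cf.\ the discussion in Section~3).
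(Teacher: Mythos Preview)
Your proof is correct and follows essentially the same approach as the paper: both write $R^{+}$ as a directed union of module-finite normal extensions, use the characteristic-zero trace splitting to descend from $R^{+}\otimes_{R}I$ to a finitely generated level, and finish with Krull's intersection theorem. The paper's execution is slightly more direct: having fixed $S$ containing the components of $\alpha$, it applies the single retraction $R^{+}\to S$ (from $S\hookrightarrow R^{+}$ splitting) to conclude $\alpha\in\bigcap_{n}\fm^{n}(S\otimes_{R}I)=0$ immediately, so your Artin--Rees step and the auxiliary submodule $L=Rm$ are not needed---once you have $m\in\fm^{n}M_{\mu}\cap M_{\lambda}=\fm^{n}M_{\lambda}$ for every $n$, Krull on $M_{\lambda}$ already finishes.
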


\begin{proof}
It suffices to show (by definition) that $R^{+} \otimes I$ is $m$-adically separated for every ideal $I$ of $R$. Let $\alpha \in \cap_{n \geq 0} m^{n}I \otimes R^{+}$. We need to show $\alpha = 0$. Let $\alpha = \sum_{i=1}^{k}r_{i} \otimes m_{i}$ where $r_{i} \in R^{+}$ and $m_{i} \in I$. We may assume that all $r_{i}$ are contained in a finite normal extension of $R$, $S$, by simply adjoining $r_{i}$ to $R$ and then taking the normalization. We used here that $R$ is Nagata which implies that normalizations are module-finite. Since $S$ has characteristic zero (and $S^{+} = R^{+}$ the maps $S \rightarrow R^{+}$ and $I\otimes S \rightarrow I \otimes R^{+}$ split. $\alpha$ is in $m^{n}(R^{+} \otimes I)$ and hence is in $m^{n}(S \otimes I)$ for every $n$. This clearly implies $\alpha$ is $0$ as $S$ and $I$ are finitely generated $R$ modules and hence so is $S \otimes I$. All finitely generated modules over a noetherian local domain are separated by Krull's intersection theorem.   

\end{proof}

\begin{theorem}
Let $R$ be a equi-characteristic zero complete local domain such that $dim(R) \geq 4$. Then $Tor_{1}^{R}(R^{+}, k) \neq 0$. In particular the hypothesis of the question of Bhatt, Iyengar, and Ma can be vacuous if one does not ask for $i \geq dim(R)$ in Question \ref{BIMq}. 
\end{theorem}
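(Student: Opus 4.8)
The plan is to argue by contradiction, assuming $Tor_1^R(R^+,k)=0$. By Theorem~\ref{intflat}, $R^+$ is $m$-adically ideal separated over $R$, so the local criterion for flatness quoted above forces $R^+$ to be flat over $R$; since $R^+$ is integral over $R$ one has $mR^+\neq R^+$, so $R^+$ is in fact faithfully flat over $R$. First, $R$ is normal: its normalization $\widetilde R$ is module-finite (a complete local ring is Nagata), it is a normal characteristic-zero domain so $\widetilde R\hookrightarrow\widetilde R^{+}=R^+$ splits by Lemma~2.1, hence $\widetilde R$ is an $R$-module direct summand of the flat module $R^+$, hence flat, hence free over the local ring $R$; being contained in $\operatorname{Frac} R$ it has rank one, so $\widetilde R=R$. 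Second, by exactly the same reasoning every module-finite normal domain extension $S$ of $R$ (for which $S^+=R^+$) is an $R$-module direct summand of $R^+$, hence flat, hence free over $R$; by Auslander--Buchsbaum $\operatorname{depth}_R S=\operatorname{depth} R$, so $S$ is Cohen--Macaulay if and only if $R$ is. In particular, if $R$ is Cohen--Macaulay then every module-finite normal extension of $R$ is a maximal Cohen--Macaulay module, hence Cohen--Macaulay as a ring.

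Thus the statement will follow once we produce a module-finite normal domain extension of $R$ that is \emph{not} Cohen--Macaulay. Here is such an extension. By the Cohen structure theorem, $R$ is module-finite over a power series ring $A=k[[x_1,x_2,x_3,x_4]]$, and $R^+=A^+$. Fix an abelian threefold $Y$ over $k$ — equivalently a product $E_1\times E_2\times E_3$ of elliptic curves, which exist over any characteristic-zero field — and a projectively normal very ample line bundle $L$ on $Y$. The section ring $B_0=\bigoplus_{d\ge 0}H^0(Y,L^d)$ is then a normal graded domain, finitely generated over $k$, whose local cohomology $H^2_{\mathfrak n}(B_0)$ contains $H^1(Y,\mathcal O_Y)\neq 0$, so $B_0$ is not Cohen--Macaulay; its completion $B$ at the irrelevant maximal ideal is a normal (the section ring is excellent, hence analytically normal) complete local domain of dimension $4$ with coefficient field $k$, still not Cohen--Macaulay, and module-finite over a power series subring which we identify with $A$. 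Choosing the embeddings of $\operatorname{Frac} R$ and $\operatorname{Frac} B$ into a fixed algebraic closure of $\operatorname{Frac} A$ so that they are linearly disjoint over $\operatorname{Frac} A$, their compositum is a field $M$; let $S$ be the integral closure of $R$ in $M$. Then $S$ is a module-finite ($R$ is Nagata) normal domain extension of $R$ that contains $B$, and, $B$ being a normal characteristic-zero domain, Lemma~2.1 makes $B\hookrightarrow S$ split as $B$-modules; hence $H^i_m(B)$ is a direct summand of $H^i_m(S)$ (local cohomology along the maximal ideals of $A,B,R,S$ all coinciding, these being module-finite local extensions), so $S$ is not Cohen--Macaulay.

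When $R$ is regular this already finishes the argument: then $R=A$ and the construction of the previous paragraph applies with $R=A$, producing a free-hence-maximal-Cohen--Macaulay extension that is not Cohen--Macaulay, a contradiction, so $Tor_1^A(A^+,k)\neq 0$. For a general $R$ of dimension $4$ one has $R^+=A^+$, and flatness of $R^+$ over $R$ yields a degenerate change-of-rings spectral sequence $Tor_n^A(R^+,N)\cong R^+\otimes_R Tor_n^A(R,N)$ for $A$-modules $N$; with $N=k$ and using faithful flatness this ties the vanishing of $Tor_1^R(R^+,k)$ to the $A$-side. The step I expect to be the real obstacle is upgrading ``every module-finite normal extension of $R$ is free over $R$'' into a genuine contradiction for an arbitrary, possibly non-Cohen--Macaulay, normal $R$ of dimension $4$ — that is, showing $R$ must be Cohen--Macaulay so that the non-Cohen--Macaulay extension $S$ above is absurd. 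Depth bookkeeping alone does not suffice, since by Lemma~2.1 $H^i_m(R)$ is a direct summand of $H^i_m(S)$ for every module-finite normal $S$, so no such $S$ can be deeper than $R$; and it is exactly here that the hypothesis $\dim R\ge 4$ must be used. I would attack it by a finer analysis of $H^\bullet_m(R^+)$, which by faithfully flat base change equals $H^\bullet_m(R)\otimes_R R^+$, together with the depths of module-finite normal covers of $R$, to rule out that such an $R$ can have $R^+$ flat over it.
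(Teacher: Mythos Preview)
Your regular case is correct and amounts to a concrete construction showing $R^+$ cannot be Cohen--Macaulay over a regular characteristic-zero local ring of dimension $\geq 3$; the paper simply cites this as well known and does not reprove it.

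The general case, however, has a genuine gap, as you yourself concede in your last paragraph. Your strategy needs $R$ to be Cohen--Macaulay so that the non-Cohen--Macaulay extension $S$ you build becomes absurd, and you never establish this. The depth bookkeeping gives only $\operatorname{depth}_R S = \operatorname{depth} R$ for every free module-finite normal extension $S$, which is no contradiction when $R$ itself is normal of depth $2$ and dimension $4$; and the change-of-rings spectral sequence does not help, since $R$ is typically not flat over $A$, so flatness of $R^+$ over $R$ says nothing directly about flatness of $A^+ = R^+$ over $A$.

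The paper sidesteps all of this with a one-line localization that you are missing. Once $R^+$ is flat over $R$ (whence $R$ is normal, as you already showed), one chooses a prime $\mathfrak p$ of height $\dim R - 1 \geq 3$ with $R_{\mathfrak p}$ regular; such a prime exists because the singular locus of a normal excellent local ring has codimension $\geq 2$, while there are infinitely many primes of the given height. Absolute integral closures localize, so $(R_{\mathfrak p})^+ \cong (R^+)_{\mathfrak p}$ is flat over the \emph{regular} local ring $R_{\mathfrak p}$ of dimension $\geq 3$, and one is back in the regular case already handled. This localization-to-a-regular-prime reduction is the missing idea.
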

\begin{proof}
First assume $R$ is regular. Suppose $Tor_{1}^{R}(R^{+}, k) = 0$. Local criterion of flatness \cite[Theorem 22.3]{Mat89} implies $R \rightarrow R^{+}$ is \emph{flat}. This implies that $R^{+}$ is Cohen-Macaulay which is well known to be false given that $R$ has characteristic zero. 

When $R$ is not regular we simply notice that the problem reduces to when it is. We get by the same argument above that $R^{+}$ is flat. Choose a prime $\mathfrak{p}$ of height at least $3$ such that $R_{\mathfrak{p}}$ is regular. Such a prime exists by the lemma below. Absolute integral closures localize and localization of a flat module is flat. Hence we have that $(R^{+})_{\mathfrak{p}} = (R_{\mathfrak{p}})^{+}$ is flat over $R_{\mathfrak{p}}$. This implies that $R^{+}$ is Cohen-Macaulay which is well known to be false given that $R$ has characteristic zero. See \cite[Introduction, last line of paragraph after Theorem 1.1]{Bha21} and one can follow \cite[Proposition 2.4]{ST21} for an explicit proof.
\end{proof}

We thank Kevin Tucker for conversations regarding the lemma below.

\begin{lemma}
Let $(R,m)$ be a noetherian normal local ring of dimension $2$ or more. Then there exists a prime $\mathfrak{p}$ of height $dim(R)-1$ such that $R_{\mathfrak{p}}$ is regular.
\end{lemma}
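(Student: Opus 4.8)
The plan is to exhibit a height-$(d-1)$ prime $\fp$ of $R$ (where $d=\dim R\geq 2$) lying in the regular locus by descending one codimension at a time inside the punctured spectrum. Since $R$ is noetherian and normal, it satisfies Serre's conditions $(R_1)$ and $(S_2)$; in particular the singular locus $\Sing(R)=V(J)$ for some ideal $J$ with $\height J\geq 2$. First I would recall that for a normal (hence equidimensional and catenary, being local) ring, for every prime $\fq$ the localization $R_\fq$ is again a normal local ring of dimension $\height\fq$, and $R_\fq$ is regular precisely when $\fq\notin\Sing(R)$, i.e. when $J\not\subseteq\fq$.

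The core of the argument is a prime-avoidance/dimension descent: I claim there is a chain of primes $\fp_1\subsetneq \fp_2\subsetneq\cdots\subsetneq\fp_{d-1}\subsetneq\fm$ with $\height\fp_j=j$ and $J\not\subseteq\fp_{d-1}$. To build it, start at the top: among the primes of height $d-1$, I want one not containing $J$. Equivalently, passing to $\overline{R}=R/\mathfrak{q}$ for a suitable minimal prime is unnecessary since $R$ is a domain; instead consider that $\dim R/J \le d-2$ because $\height J\ge 2$ and $R$ is catenary equidimensional, so $V(J)$ has no component of dimension $d-1$. Hence not every height-$(d-1)$ prime contains $J$. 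Concretely, I would pick any height-$(d-1)$ prime $\fp$: if it contains $J$ it is a (non-maximal) prime of $R/J$; but one can always find a height-$(d-1)$ prime avoiding $J$ by choosing, via prime avoidance applied inside a maximal chain, a parameter-like element. A cleaner route: localize and complete is not needed — just use that the set of primes of height $d-1$ is the set of generic points of irreducible closed subsets of $\Spec R$ of dimension $1$; since $\dim V(J)\le d-2 < d-1$, no such subset is contained in $V(J)$, so its generic point $\fp$ satisfies $J\not\subseteq\fp$, giving $R_\fp$ regular of dimension $d-1$.

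Therefore the key steps, in order, are: (1) invoke normality to get $(R_1)+(S_2)$ and conclude $\Sing R = V(J)$ with $\height J\ge 2$; (2) use that $R$ is local, hence equidimensional and catenary (normal domains are; or cite that $R$ excellent/Nagata suffices, though we only need catenary which holds as $R$ is a quotient-free normal local — more safely, reduce to the completion $\widehat R$ which is excellent, or simply note catenarity is automatic here), so $\dim R/J \le d-2$; (3) deduce there exists a height-$(d-1)$ prime $\fp$ not containing $J$, either by the dimension count on $V(J)$ or by explicit prime avoidance; (4) conclude $R_\fp$ is a normal local ring of dimension $d-1$ not in the singular locus, hence regular.

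The main obstacle is step (2)–(3): making precise that ``$V(J)$ has dimension $\le d-2$ forces some height-$(d-1)$ prime to avoid $J$'' requires $R$ to be catenary and equidimensional so that height and coheight add up correctly — for a general noetherian local ring this can fail, but for a normal local domain (and certainly under the paper's standing excellence hypotheses) it is fine, since normal local domains are known to be equidimensional and one may always pass to the excellent completion $\widehat R$, which is still normal (Serre's criterion is preserved) and catenary, find the prime there, and contract. I would spell out this reduction carefully, as it is the only genuinely delicate point; everything else is the standard dictionary between Serre's conditions, the singular locus, and regularity of localizations.
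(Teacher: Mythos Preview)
Your ``cleaner route'' contains a genuine gap. You correctly note that height-$(d-1)$ primes correspond to dimension-$1$ irreducible closed subsets and that $\dim V(J)\le d-2$, but the inequality you then invoke, ``$\dim V(J)\le d-2<d-1$,'' compares $\dim V(J)$ to the \emph{height} $d-1$ of $\fp$ rather than to the \emph{dimension} $1$ of $V(\fp)$. For $d\ge 3$ one has $d-2\ge 1$, so $V(J)$ can contain plenty of dimension-$1$ irreducible closed sets: in $k\llbracket w,x,y,z\rrbracket$ with $J=(w,x)$, every $(w,x,f)$ with $f\in(y,z)$ irreducible is a height-$3$ prime containing $J$, and there are infinitely many of these. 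Thus your step (3), deducing that some height-$(d-1)$ prime avoids $J$ from the bound $\dim V(J)\le d-2$ alone, fails precisely in the range $d\ge 3$ where the lemma is actually applied.

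The paper's argument is by counting rather than by a dimension comparison: with $e:=\height J\ge 2$, the height-$e$ primes containing $J$ are exactly its minimal primes, hence finite in number, whereas there are infinitely many height-$e$ primes in $R$ by Kaplansky's Theorem~144; so some height-$e$ prime avoids $J$. Your vaguer suggestion of building a chain via ``prime avoidance inside a maximal chain'' is the right way to push the height all the way to $d-1$: once $\fp_j$ of height $j<d-1$ with $J\not\subseteq\fp_j$ is in hand, pass to the local domain $R/\fp_j$ of dimension $\ge 2$, where height-$1$ primes are infinite in number but only finitely many contain the nonzero image of $J$; choose one avoiding it and lift. This inductive step, together with the catenarity you correctly flag as needed to control heights along the chain, is what actually fills the gap.
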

\begin{proof}
Let $I = \{\cap \mathfrak{q}| R_{\mathfrak{q}} \text{is not regular}\}$. If $I = m$ then any prime of height $dim(R) - 1$ works, and such a prime exists as complete local domains are catenary. If $I$ is not $m$ and the height of $I$ is $d$, then there are finitely many prime ideals of height $d$ containing $I$ which are the minimal primes by primary decomposition. Since $R$ is normal $d\geq 2$. Then $R$ has infinitely many primes of height $d$ (by \cite[Theorem 144]{Kap74}) so one can simply take $\mathfrak{p}$ to be one which is not minimal over and consequently does not contain $I$. 
\end{proof}

\begin{remark}
It is clear that the above argument is going to work with, for example, $Tor_{2}^{R}(R^{+}, k) =0$ and $dim(R) = 5$. While this does not imply $R^{+}$ is \emph{flat} over $R$, it does show that flat dimension of $R^{+}$ is 2 (since it is a direct limit of normal rings $S$ which have flat dimension $2$) which will force the depth of $R^{+}$ to be greater than or equal to $3$ (arguing similarly by Auslander-Buschbaum formula). One can localize to primes of height $3$ to see that the depth of $R^{+}$ cannot be greater than $2$. We omit this discussion for the sake of clarity and brevity.
\end{remark}

Hence we have the following `corrected' version of the question of Bhatt, Iyengar, and Ma:
\begin{question}\label{BIMc}
If $(R,m,k)$ is a noetherian local domain
of equi-characteristic zero (i.e. $\mathbb{Q} \subset R$) and $Tor_{i}^{R}(R^{+},k) = 0$ for some $i \geq dim(R)$, then is $R$ regular?
\end{question}
This raises the following \emph{fascinating} question in other characteristics.
\begin{question}
Let $R$ be a complete local domain of positive characteristic. Is $R^{+}$ $m$-adically ideal separated? If $R$ is of mixed characteristic is the $p$-adic completion of $R^{+}$, $\widehat{R^{+}}$ $m$-adically ideal separated? 
\end{question}

Following the arguments of the proof of the theorem above we see that a positive answer to the above question would have `another' proof of the theorem of Bhatt, Iyengar, and Ma atleast for $Tor_{1}$ -  $Tor_{1}^{R}(R^{+}, k) = 0$ implies $R^{+}$ is flat which by `Kunz's theorem' implies $R$ is regular \cite[Theorem 4.7]{BIM19}. However one can check that the proof of the theorem of Bhatt, Iynegar, and Ma (for $Tor_{1}$) uses Cohen-Macaulayness of $R^{+}$, hence we expect the answer to the above question to use deep properties of $R^{+}$. It is easy to see why we ask for the $p$-adic completion of $R^{+}$ in mixed characteristic instead of $R^{+}$. $R^{+}$ is not $m$-adically ideal separated in mixed characteristic, we know that $Tor_{1}^{R}(R^{+}, k) = 0$, hence if true the local criterion for flatness would imply $R^{+}$ is flat and hence Cohen-Macaulay. It is well known that $R^{+}$ is not Cohen-Macaulay in mixed charactertistic.

The perfection of a ring $R$ of positive characteristic, $R_{perf}$ is $lim(R\rightarrow R \rightarrow R \rightarrow \cdots)$ where each map is the frobenius. It is sometimes denoted as $R^{\frac{1}{p^{\infty}}}$ in literature and conceptually is the large ring containing all $p$-th power roots of elements of $R$. It is easy to see that our proof goes through for algebras which are a limit of split noetherian ring maps. For example our proof goes through to show that the perfection of a positive characteristic $F$-split ring is $m$-adically ideal separated. This raises the question:
\begin{question}
Let $R$ be a noetherian domain of positive characteristic. Is $R_{perf}$ $m$-adicaly ideal separated?
\end{question}

\begin{remark}
The notion of $m$-adic ideal separatedness is closely related to the notion of weakly intersection flatness \cite[Propsition 5.7 e)]{HJ21} for ideals and Ohm-Rush modules \cite{ES16}, \cite{ES19}, \cite{ES21}. While it does not make sense to ask for intersection flatness of $R^{+}$ in characteristic zero as the results of this section show that $R^{+}$ is (usually) never flat over $R$, it does make sense to ask if $R^{+}$ is weakly intersection flat for ideals when $R$ has characteristic zero and whether ($p$-adic completion of ) $R^{+}$ is Ohm-Rush or intersection flat if $R$ is regular of positive or mixed characteristics. These are under investigation.
\end{remark}

We avoid stating the definitions of weakly intersection flatness, intersection flatness, and Ohm-Rush modules as we feel they are technical for the purposes of this section. We do mention the following proposition due to Hochster and Jefferies which says, roughly speaking, that when $R$ is a complete local ring and $S$ is an $R$-flat algebra, $m$-adic ideal separatedness with ideals replaced by all finitely generated modules is equivalent to intersection flatness. The forward implication is non-trivial.

\begin{proposition}
(ref. \cite[Proposition 5.7 e)]{HJ21} Let $R$ be a complete local ring and $S$ is an $R$-flat algebra. If $S \otimes M$ is $m$-adically separated for all finitely generated modules $M$ then $S$ is intersection flat as an $R$ module.
\end{proposition}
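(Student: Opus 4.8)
The plan is to trade the arbitrary intersections that occur in the definition of intersection flatness for the single $m$-adic intersection that the hypothesis controls, with Chevalley's lemma serving as the bridge. Recall that, $S$ being already flat, it is intersection flat exactly when for every finitely generated $R$-module $M$ and every family $\{N_\lambda\}_{\lambda\in\Lambda}$ of $R$-submodules of $M$ the canonical map is an equality $S\otimes_R\bigl(\bigcap_\lambda N_\lambda\bigr)=\bigcap_\lambda\bigl(S\otimes_R N_\lambda\bigr)$ inside $S\otimes_R M$; flatness gives the inclusion $\subseteq$ for free, so the whole content is the reverse inclusion $\bigcap_\lambda(S\otimes N_\lambda)\subseteq S\otimes\bigcap_\lambda N_\lambda$.

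First I would reduce to the case $\bigcap_\lambda N_\lambda=0$. Set $N=\bigcap_\lambda N_\lambda$. Since $S$ is flat, tensoring the exact sequences $0\to N_\lambda/N\to M/N\to M/N_\lambda\to 0$ identifies $S\otimes(N_\lambda/N)$ with $\ker\bigl(S\otimes(M/N)\to S\otimes(M/N_\lambda)\bigr)$, and a routine diagram chase then deduces the assertion for $(M,\{N_\lambda\})$ from the assertion for $(M/N,\{N_\lambda/N\})$, a family whose intersection is now $0$. So it suffices to show: if $M$ is finitely generated and $\bigcap_\lambda N_\lambda=0$, then $\bigcap_\lambda(S\otimes N_\lambda)=0$.

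Now comes the key step. Because $R$ is a Noetherian complete local ring and $M$ is finitely generated, Chevalley's lemma applies: for every $k\ge 1$ there are finitely many indices $\lambda_1,\dots,\lambda_r$ with $N_{\lambda_1}\cap\cdots\cap N_{\lambda_r}\subseteq m^kM$. (This is the module-theoretic form of Chevalley's theorem, and this is precisely where completeness of $R$ enters; one may first replace $\{N_\lambda\}$ by the downward-directed family of its finite intersections in order to cite the classical statement for decreasing sequences.) Since $S$ is flat it commutes with finite intersections of submodules and with $m^k(-)$, so
\[
\bigcap_\lambda\bigl(S\otimes N_\lambda\bigr)\ \subseteq\ \bigcap_{j=1}^{r}\bigl(S\otimes N_{\lambda_j}\bigr)\ =\ S\otimes\Bigl(\bigcap_{j=1}^{r} N_{\lambda_j}\Bigr)\ \subseteq\ S\otimes(m^kM)\ =\ m^k(S\otimes M).
\]
Hence $\bigcap_\lambda(S\otimes N_\lambda)\subseteq\bigcap_{k\ge 1}m^k(S\otimes M)$. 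Finally, since $M$ is finitely generated, the hypothesis says $S\otimes M$ is $m$-adically separated, i.e. $\bigcap_{k\ge1}m^k(S\otimes M)=0$; therefore $\bigcap_\lambda(S\otimes N_\lambda)=0$, which is what we wanted.

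The step I expect to be the main obstacle is the invocation of Chevalley's lemma in the above form. The usual references (Zariski--Samuel, Matsumura) phrase it for a decreasing \emph{sequence} of submodules of a finitely generated module over a complete local ring, whereas here one has an arbitrary family; one must either check that the standard argument --- via the finite-length quotients $M/m^kM$ together with completeness of $R$ --- goes through for the downward-directed family of finite intersections, or reduce to a decreasing sequence beforehand. Everything else (the two diagram chases, and the fact that $S\otimes-$ preserves finite intersections and the submodules $m^kM$) is a purely formal consequence of flatness of $S$.
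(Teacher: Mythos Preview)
The paper does not supply its own proof of this proposition; it is stated only as a citation to Hochster--Jeffries \cite[Proposition~5.7(e)]{HJ21}. Your argument is correct and is, in outline, the standard one: Chevalley's lemma converts the arbitrary intersection into the single $m$-adic one, flatness of $S$ handles all the finite steps, and the separatedness hypothesis finishes.

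On the point you flag as the main obstacle: the extension of Chevalley's lemma from decreasing sequences to arbitrary families is indeed valid over a complete local Noetherian ring, and here is one way to see it. After replacing $\{N_\lambda\}$ by the downward-directed family of its finite intersections, let $L_k\subseteq M/m^kM$ be the minimum of the images of the $N_\lambda$ (this exists since $M/m^kM$ has finite length and the family is directed). One checks that the transition maps $L_{k+1}\to L_k$ are surjective, so the inverse limit $L=\varprojlim_k L_k\subseteq \varprojlim_k M/m^kM=M$ surjects onto each $L_k$. But any $x\in L$ satisfies $x\in N_\lambda+m^kM$ for every $\lambda$ and every $k$; since submodules of a finitely generated module over a complete Noetherian local ring are $m$-adically closed, this forces $x\in N_\lambda$ for all $\lambda$, hence $x\in\bigcap_\lambda N_\lambda=0$. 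Thus $L=0$, so every $L_k=0$, which says precisely that for each $k$ some finite subfamily has intersection contained in $m^kM$. This fills the only gap you were worried about; the remaining steps (flatness preserves finite intersections and the submodules $m^kM$, and the final appeal to separatedness) are exactly as you wrote them.
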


We make the following observation as an application of our techniques:
\begin{theorem}
Let $R$ be a regular complete local characteristic zero domain of dimension $2$ (a power series ring in two variables by Cohen structure theorem). Then $R^{+}$ is \emph{intersection flat} and Ohm-Rush as a $R$-module. 
\end{theorem}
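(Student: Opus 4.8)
# Proof Proposal

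The plan is to combine the $m$-adic ideal separatedness of $R^+$ (Theorem~\ref{intflat}) with flatness of $R^+$ over a regular ring and then invoke the Hochster--Jefferies criterion. Since $R$ is a regular complete local domain of dimension $2$, it is a power series ring $k[[x,y]]$, and in particular $R$ is an excellent (hence Nagata) regular local ring. By the Hochster--Huneke and Bhatt theory in mixed/positive characteristic $R^+$ need not be flat, but in \emph{equicharacteristic zero} over a \emph{regular} base the situation is different: each module-finite normal extension $S$ of $R$ splits off $R$ by Lemma~2.2, so $S$ has finite projective dimension over $R$ and depth $2$, whence $S$ is free (flat) over $R$ by Auslander--Buchsbaum. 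Writing $R^+$ as the filtered colimit of these finite normal extensions, I conclude $R^+$ is flat over $R$.

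Next I would upgrade the $m$-adic ideal separatedness of Theorem~\ref{intflat} from ideals $I$ to arbitrary finitely generated modules $M$. The argument is exactly parallel: given $\alpha \in \bigcap_{n} m^n(R^+ \otimes_R M)$, write $\alpha$ in terms of finitely many elements of $R^+$, which therefore lie in a single module-finite normal extension $S$ of $R$ (normalizations are module-finite since $R$ is Nagata). Because $S$ has characteristic zero and $S^+ = R^+$, the map $S \to R^+$ splits as an $S$-module map by Lemma~2.1, hence $M \otimes_S S \to M \otimes_S R^+$ splits (here I base-change $M$ along $R \to S$); so $\alpha$, being in $m^n(R^+ \otimes_R M)$ for all $n$, already lies in $\bigcap_n m^n(S \otimes_R M)$. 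Since $S \otimes_R M$ is a finitely generated module over the noetherian local ring $R$, Krull's intersection theorem forces $\alpha = 0$. Thus $R^+ \otimes_R M$ is $m$-adically separated for every finitely generated $R$-module $M$.

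Finally I would feed these two facts into Proposition~(ref.~\cite[Proposition 5.7 e)]{HJ21}): with $R$ a complete local ring, $S = R^+$ an $R$-flat algebra, and $S \otimes_R M$ $m$-adically separated for all finitely generated $M$, the proposition yields that $R^+$ is intersection flat over $R$. The Ohm--Rush property then follows, since intersection flatness implies the Ohm--Rush property (the Ohm--Rush condition is a formal consequence: for intersection-flat $S$ over noetherian $R$, every element of $S \otimes_R M$ lies in $S \otimes_R N$ for a unique smallest finitely generated submodule $N \subseteq M$, which is precisely the content-module formulation of Ohm--Rush).

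The main obstacle I anticipate is the step upgrading Theorem~\ref{intflat} from ideals to all finitely generated modules: one must be careful that the splitting of $S \to R^+$ as $S$-modules genuinely survives the tensor $- \otimes_S M$ and that the containment $\alpha \in m^n(S \otimes_R M)$ is legitimately deduced (as opposed to merely $\alpha$ mapping into that submodule under a possibly non-injective map). Here flatness of $S$ over $R$ — already established in the first paragraph — guarantees $S \otimes_R M \hookrightarrow R^+ \otimes_R M$ is injective, which removes the ambiguity. Everything else is a matter of citing the local criterion for flatness, Auslander--Buchsbaum, and the Hochster--Jefferies proposition verbatim.
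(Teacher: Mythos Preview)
Your proposal is correct and follows essentially the same approach as the paper: establish flatness of $R^{+}$ over the regular $2$-dimensional base, run the splitting argument of Theorem~\ref{intflat} with an arbitrary finitely generated module $M$ in place of an ideal, and then invoke \cite[Proposition 5.7 e)]{HJ21}. The only slip is your citation of Lemma~2.2 for the flatness of each finite normal extension $S$---that lemma assumes a Tor-vanishing hypothesis you do not have here; what you actually need (and what the paper uses) is simply that $R$ is regular so every module has finite projective dimension, and $S$ is normal hence Cohen--Macaulay in dimension $2$, so Auslander--Buchsbaum gives $\mathrm{pd}_R(S)=0$ directly.
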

\begin{proof}
This follows from our observation that in characteristic two $R^{+}$ is a `limit of split (noetherian) maps' and that in dimension $2$ $R \rightarrow R^{+}$ is flat when $R$ is regular. More precisely, since $R^{+}$ is Cohen-Macaulay in dimension $2$ it is flat over $R$ if $R$ is regular, in any characteristic. 

It suffices to show (by \cite[Proposition 5.7 e)]{HJ21} that $R^{+} \otimes M$ is $m$-adically separated for every finitely generated module $M$. Let $\alpha \in \cap_{n \geq 0} R^{+} \otimes M$. We need to show $\alpha = 0$. Let $\alpha = \sum_{i=1}^{k}r_{i} \otimes m_{i}$ where $r_{i} \in R^{+}$ and $m_{i} \in M$. We may assume that all $r_{i}$ are contained in a finite normal extension of $R$, $S$, by simply adjoining $r_{i}$ to $R$ and then taking the normalization. We used here that $R$ is Nagata which implies that normalizations are module-finite. Since $R, S, R^{+}$ all have characteristic zero the maps $S \rightarrow R^{+}$ and $M\otimes S \rightarrow M \otimes R^{+}$ split. $\alpha$ is in $m^{n}(R^{+} \otimes M)$ and hence is in $m^{n}(S \otimes M)$ for every $n$. This clearly implies $\alpha$ is $0$ as $S$ and $I$ are finitely generated $R$ modules and hence so is $S \otimes I$. All finitely generated modules over a noetherian local ring are separated by Krull's intersection theorem. 
\end{proof}

Note that our main result is in a sense a converse to this statement. That is if $R \rightarrow R^{+}$ is flat (under the stated assumptions on $R$) then $R$ is regular. One can similarly prove that if $R$ is regular and $F$-finite of positive characteristic then $R_{perf}$ is intersection flat. This fact was observed independently and earlier by Neil Epstein.

The proposition above raises the question:
\begin{question}
Let $R$ be a regular complete local domain of positive characteristic. Is $R^{+}$ intersection flat or Ohm-Rush? If $R$ is regular of mixed characteristic is the $p$-adic completion of $R^{+}$, $\widehat{R^{+}}$ intersection flat or Ohm-Rush? 
\end{question}
We have to assume $R$ is regular in the above question as intersection flat/ Ohm-Rush modules are flat and $R^{+}$ will be flat over $R$ only if $R$ is regular by the main theorems of the work of Bhatt, Iyengar, and Ma \cite[Theorems 4.7, 4.12]{BIM19}. The proof of our theorems above show that if $R^{+}$ is a `limit of split maps' then the answer is yes, however this is known to be false in both positive and mixed characteristics. If true it would imply every complete local domain has a module finite extension which is a splinter, however splinters are Cohen-Macaulay (this follows from the theorems of Hochster-Huneke and Bhatt on Cohen-Macaulayness of $R^{+}$) and it is known that `small Cohen-Macaulay algebras' do not exist \cite{ST21}. Hence it seems that a proof of this statement will require almost mathematics and other non-trivial techniques. 

We now show that a positive answer to an old question of Aberbach and Hochster answers the question of Bhatt, Iyengar, and Ma.

\begin{citedque}[{\citeleft\citen{AH97}\citemid Question 3.7\citeright }]\label{AHq}
If $R$ is a complete local domain containing $\mathbb{Q}$ then is the $Tor$ dimension of $R^{+}/m_{R^{+}}$ (as an $R^{+}$ module) equal to $dim(R)$?
\end{citedque}

We make some remarks on this question. It is fairly straightforward and was observed in \cite[Remark 4.5]{Pat22} that the answer to this question is yes if $dim(R) = 1$. This follows from the fact that under this assumption $R^{+}$ is a valuation domain. Hence the maximal ideal $m_{R^{+}}$ is flat as an $R^{+}$ module. The projective dimension of $R^{+}/m_{R^{+}}$ however is $2$. We briefly state the remarks Aberbach and Hochster make regarding Question \ref{AHq}.

\begin{lemma}
  Let $(R,m)$ be a quasi local ring of dimension $d$ such that every $d$-element $m$-primary ideal is a regular sequence. If every finitely generated $m$-primary ideal is contained in a $d$-generated ideal then $Tordim(R/m) \leq d$. 
\end{lemma}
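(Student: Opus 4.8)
The plan is to read $\operatorname{Tordim}(R/m)$ as the flat dimension $\operatorname{fd}_R(R/m)$ and to bound it by writing $R/m$ as a filtered colimit of cyclic modules of flat dimension at most $d$. The building blocks will be the quotients $R/J$ with $J$ an $m$-primary ideal generated by $d$ elements: for such a $J$, the first hypothesis says that any $d$-element generating set of $J$ is a regular sequence, so the associated Koszul complex is a free resolution of $R/J$ of length $d$, giving $\operatorname{fd}_R(R/J) \le d$. Since $\operatorname{Tor}$ commutes with filtered colimits in each argument, it then suffices to identify $R/m$ with $\varinjlim R/J$ over a directed family of such $J$; this yields $\operatorname{Tor}_{i}^{R}(R/m, N) = \varinjlim \operatorname{Tor}_{i}^{R}(R/J, N) = 0$ for every $i > d$ and every $R$-module $N$, whence $\operatorname{Tordim}(R/m) \le d$.

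To build this colimit, I would let $\mathcal{J}$ be the set of $m$-primary ideals of $R$ that can be generated by $d$ elements; note that each such ideal is contained in $m$. Fixing a system of parameters $x_1, \dots, x_d$ shows $\mathcal{J}$ is nonempty. The family $\mathcal{J}$ is directed upward under inclusion: given $J_1, J_2 \in \mathcal{J}$, the ideal $J_1 + J_2$ is finitely generated and $m$-primary, so the second hypothesis places it inside a $d$-generated ideal, which (containing a proper $m$-primary ideal) may be taken proper, hence $m$-primary, hence a member of $\mathcal{J}$. And $\mathcal{J}$ exhausts $m$: for $z \in m$ the ideal $(z) + (x_1, \dots, x_d)$ is finitely generated and $m$-primary, so by the second hypothesis it lies in some $J \in \mathcal{J}$, and then $z \in J$. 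Hence $\bigcup_{J \in \mathcal{J}} J = m$, the canonical surjections $R/J \to R/J'$ for $J \subseteq J'$ form a filtered system, and $R/m = \varinjlim_{J \in \mathcal{J}} R/J$; combining this with the flat-dimension bound on each $R/J$ completes the argument.

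I expect the only genuine obstacle to be the reduction itself — replacing the large, badly behaved module $R/m$, which over the non-noetherian rings of interest need not even be finitely presented and has no finite projective resolution, by the well-behaved quotients $R/J$. That replacement works precisely because $\operatorname{Tor}$, unlike $\operatorname{Ext}$, commutes with filtered colimits, so the non-noetherian setting causes no trouble here. Everything else is routine: the Koszul resolution of $R/J$ is standard once its generators are known to form a regular sequence, and the directedness and exhaustion properties of $\mathcal{J}$ are immediate from the second hypothesis applied to $J_1 + J_2$ and to $(z) + (x_1, \dots, x_d)$. The one point worth a line of care is that the $d$-generated overideal furnished by the hypothesis may be chosen proper — equivalently $m$-primary — which holds because it contains a proper $m$-primary ideal.
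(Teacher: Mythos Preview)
The paper does not give a proof here; it simply cites \cite[Lemma~3.8]{AH97}. Your argument is correct and is almost certainly the one Aberbach--Hochster have in mind: write $R/m$ as the filtered colimit of the $R/J$ with $J$ ranging over $d$-generated $m$-primary ideals, resolve each $R/J$ by its Koszul complex (which is exact of length $d$ since the generators form a regular sequence by the first hypothesis), and finish by commuting $\operatorname{Tor}$ with the filtered colimit.

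One small correction. Your justification that the $d$-generated overideal ``may be chosen proper \dots\ because it contains a proper $m$-primary ideal'' is not valid as written: containing a proper ideal does not force an ideal to be proper. The intended reading of the hypothesis (and the one in \cite{AH97}) is that the promised $d$-generated ideal is itself $m$-primary; on a completely literal reading the second hypothesis is vacuous, since every ideal sits inside $R=(1)$. With that reading there is nothing further to justify, and your clause ``because it contains a proper $m$-primary ideal'' is exactly the reason that \emph{proper} and \emph{$m$-primary} are equivalent for such an overideal. You might also note that the nonemptiness of $\mathcal{J}$ (the existence of a single $d$-generated $m$-primary ideal) is part of the standing assumptions rather than automatic in an arbitrary quasi-local ring; in the intended application to $R^{+}$ it is immediate from a system of parameters of $R$.
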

\begin{proof}
This is \cite[Lemma 3.8]{AH97}.
\end{proof}
\begin{corollary}
  Let $(R,m)$ be a complete local domain of dimension $2$ such that every finitely generated $m$-primary ideal of $R^{+}$ is contained in a $2$-generated ideal then $Tordim(R^{+}/m_{R^{+}}) \leq 2$. 
\end{corollary}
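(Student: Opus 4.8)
The plan is to apply the preceding lemma (\cite[Lemma~3.8]{AH97}) to the quasi-local ring $R^{+}$ with $d=2$. That lemma requires two things of a quasi-local ring of dimension $d$: that every $d$-element $m$-primary ideal is a regular sequence, and that every finitely generated $m$-primary ideal is contained in a $d$-generated ideal. The second requirement, for $R^{+}$, is exactly the hypothesis of the corollary. Moreover $R^{+}$ is quasi-local with maximal ideal $m_{R^{+}}$ (as $R$ is complete, hence henselian, so an integral domain extension of it is again quasi-local), and $\dim R^{+}=2$ since $R^{+}$ is integral over $R$. So the entire content of the argument is the first requirement: every two-element $m_{R^{+}}$-primary ideal $(a,b)$ of $R^{+}$ is a regular sequence on $R^{+}$.

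To check this I would use the presentation $R^{+}=\varinjlim_{S}S$, where $S$ runs over the module-finite normal domain extensions of $R$ with $R\subseteq S\subseteq R^{+}$; these are directed and cofinal because $R$ is a complete local domain, hence Nagata, so for any finite set $\theta_{1},\dots,\theta_{n}\in R^{+}$ the normalization of $R[\theta_{1},\dots,\theta_{n}]$ is module-finite over $R$, normal, and contained in $R^{+}$. Each such $S$ is a noetherian local normal domain of dimension $2$, hence satisfies Serre's condition $(S_{2})$ and is therefore Cohen--Macaulay; moreover, being a normal characteristic-zero domain, $S$ admits an $S$-linear splitting $\psi\colon R^{+}=S^{+}\to S$ of the inclusion $S\hookrightarrow R^{+}$, by the splitting lemma for normal characteristic-zero domains proved in Section~2 (the normalized trace).

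Now fix a two-element $m_{R^{+}}$-primary ideal $(a,b)$ of $R^{+}$. Since $R^{+}$ is a domain, $a$ is a nonzerodivisor, so it suffices to show $b$ is a nonzerodivisor on $R^{+}/aR^{+}$: given $\beta\in R^{+}$ with $b\beta\in aR^{+}$, I must show $\beta\in aR^{+}$. Choose $S$ containing $a,b,\beta$. A lying-over argument shows $(a,b)S$ is $m_{S}$-primary: if some prime $\mathfrak{q}\subsetneq m_{S}$ of $S$ contained $(a,b)S$, then a prime of $R^{+}$ lying over $\mathfrak{q}$ would contain $(a,b)R^{+}$ and be strictly smaller than $m_{R^{+}}$, contradicting $m_{R^{+}}$-primariness. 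Since $(a,b)S$ is $m_{S}$-primary and $\dim S=2$, the sequence $a,b$ is a system of parameters of $S$, hence a regular sequence because $S$ is Cohen--Macaulay. Writing $b\beta=ar$ with $r\in R^{+}$ and applying $\psi$ yields $b\beta=\psi(b\beta)=a\,\psi(r)\in aS$, so $\beta\in aS\subseteq aR^{+}$ as $a,b$ is a regular sequence on $S$. This verifies the first hypothesis, and \cite[Lemma~3.8]{AH97} then gives $\mathrm{Tordim}(R^{+}/m_{R^{+}})\le 2$.

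The only nonformal point, and the one I expect to be the crux, is this verification that $(a,b)$ is a regular sequence on $R^{+}$ — in effect, that $R^{+}$ behaves like a balanced big Cohen--Macaulay algebra in dimension $2$. It goes through because in dimension $2$ one can descend to module-finite normal, hence Cohen--Macaulay, subextensions and transport regular-sequence statements back up along the characteristic-zero trace splittings; in dimension $\ge 3$ the analogous claim fails, in accordance with the failure of Cohen--Macaulayness of $R^{+}$ in characteristic zero recorded in Section~3.
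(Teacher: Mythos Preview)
Your proof is correct and takes essentially the same approach as the paper: apply \cite[Lemma~3.8]{AH97} after verifying that two-element $m_{R^{+}}$-primary ideals are regular sequences via descent to module-finite normal (hence two-dimensional Cohen--Macaulay) subextensions. One minor simplification: rather than invoking the characteristic-zero trace splitting $\psi$, you can simply enlarge $S$ to also contain the witness $r$ for $b\beta=ar$, so that the relation already holds in the Cohen--Macaulay ring $S$ and the conclusion $\beta\in aS$ follows directly; this is what the paper's terser argument does, and it removes the dependence on characteristic zero.
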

\begin{proof}
  It is not known whether the conditions of this Corollary are satisfied and by an easy induction argument it is enough to show this for ideals generated by $3$ elements. $R^{+}$ is a limit of two dimensional normal and hence Cohen-Macaulay rings. Thus every pair of elements generating a height $2$ ideal in $R^{+}$ is an $R^{+}$ sequence. Now we may apply the above lemma.
\end{proof}

\begin{lemma}
 If $z$ satisfies a polynomial of the form $X^{n} - f(u, v)$ then $(u,v, z)$ is contained in a two-generated ideal.
\end{lemma}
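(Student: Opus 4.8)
The plan is to move the whole question into $R^{+}$, where one can extract $n$-th roots freely, and there produce an explicit two-generated ideal.

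First I would dispose of the degenerate cases. If $u=0$ or $v=0$, the ideal $(u,v,z)$ is already two-generated. If $u$ or $v$ is a unit, or if $f$ has nonzero constant term, then $z^{n}=f(u,v)$ is a unit and $(u,v,z)$ is the unit ideal, hence lies in $(1,0)$. So assume $f\in(U,V)\,k[U,V]$ (with $k$ a coefficient field of $R$, so $k\subseteq R\subseteq R^{+}$) and that $u,v$ are nonzero nonunits. One clean sub-case can then be dispatched at once: writing $f(U,V)=U\phi(U,V)+V\psi(V)$ with $\psi(V)=f(0,V)/V$, if $\psi$ has nonzero constant term then $\psi(v)$ is a unit of $R^{+}$ and $v=\psi(v)^{-1}(z^{n}-u\phi(u,v))\in(u,z)R^{+}$, so $(u,v,z)\subseteq(u,z)R^{+}$; symmetrically, a nonzero $U$-linear term of $f$ puts us in $(v,z)R^{+}$. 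The substance is thus the case where $f$ has no linear term (e.g.\ the normal forms $z^{n+1}=uv$ or $z^{2}=u^{3}+v^{4}$).

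For the general case, work inside $R^{+}$ and choose $a,b\in R^{+}$ with $a^{n}=u$, $b^{n}=v$. The candidate is $(a,b)R^{+}$: it contains $u=a^{n}$ and $v=b^{n}$ and is two-generated, so everything reduces to the claim $z\in(a,b)R^{+}$. Expanding $z^{n}=f(a^{n},b^{n})=\sum_{i+j\ge1}c_{ij}\,a^{ni}b^{nj}$ and noting that each monomial $a^{ni}b^{nj}$ with $i+j\ge1$ equals $(a^{i}b^{j})^{n}$ with $a^{i}b^{j}\in(a,b)$, we get $z^{n}\in(a,b)^{n}R^{+}$. Hence $X^{n}-f(a^{n},b^{n})$ is an equation of integral dependence of $z$ on the ideal $(a,b)R^{+}$ (all intermediate coefficients vanish, the constant coefficient lies in $(a,b)^{n}$), so $z\in\overline{(a,b)R^{+}}$.

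The remaining step — upgrading $z\in\overline{(a,b)R^{+}}$ to $z\in(a,b)R^{+}$ — is where the special shape of the equation is essential (the only nonzero coefficient below the top is the constant term $f(a^{n},b^{n})$, an honest polynomial in $a,b$ over $k$ with no constant term), and where I expect the real work to lie. I would argue as in the proof of the preceding Corollary: $R^{+}$ is a filtered union of two-dimensional normal, hence Cohen–Macaulay, Noetherian domains, so $a,b$ form a regular sequence on $R^{+}$ and $\operatorname{gr}_{(a,b)}(R^{+})\cong(R^{+}/(a,b))[X,Y]$. Since the nonzero $c_{ij}$ are units of $R^{+}$, the leading form of $f(a^{n},b^{n})$ in this graded ring is a nonzero homogeneous polynomial of degree $n\,m_{0}$, where $m_{0}\ge1$ is the order of $f$, so the $(a,b)$-adic order of $z^{n}$ is exactly $n\,m_{0}$; moreover $z\ne0$ (otherwise $(u,v,z)=(u,v)$ is two-generated) has a finite $(a,b)$-adic order because $(a,b)$ is $\frakm$-primary and $R^{+}$ is $\frakm$-adically separated (Hochster). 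If that order is positive we are done, so the only possibility to rule out is that $z$ has $(a,b)$-adic order $0$, i.e.\ that $z$ maps to a nonzero nilpotent of $R^{+}/(a,b)R^{+}$ — equivalently, that the order of $z^{n}$ fails to be $n$ times the order of $z$. This nilpotence exclusion is the crux, and it is precisely where one must exploit that $z^{n}$ is a genuine polynomial over $k$ in $a,b$ rather than an arbitrary element of $(a,b)^{n}R^{+}$: for instance, by realizing $(a,b)R^{+}$ as principal on a suitable normalized blow-up, on which $z^{n}$ becomes a unit times a power of the single local generator so that an honest $n$-th root of $z$ along that generator can be extracted and pushed back to $R^{+}$. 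Making this descent precise, and treating the case where the leading form of $f$ is not a pure power of one variable (which forces one to work along several charts of the blow-up, or to resolve further), is the delicate point of the argument.
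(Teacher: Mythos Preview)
The paper does not prove this lemma; it merely cites \cite[Lemma 3.10]{AH97}. Your attempt commits to the candidate ideal $(a,b)R^{+}$ with $a=u^{1/n}$, $b=v^{1/n}$, shows $z$ lies in its integral closure, and then flags the passage from $\overline{(a,b)R^{+}}$ to $(a,b)R^{+}$ as ``delicate.'' In fact that passage is impossible in general: your candidate is wrong. Take $n=2$, $f(U,V)=U^{2}+V^{2}$ (no linear term, so none of your preliminary reductions dispose of it), and $R=k[[u,v]]$ with $k$ algebraically closed of characteristic zero. Then $S:=R[a,b,z]\cong k[[a,b]][Z]/(Z^{2}-a^{4}-b^{4})$ is a two-dimensional normal hypersurface with an isolated singularity at the closed point, so $z\notin(a,b)S$ (otherwise $\mathfrak m_{S}=(a,b,z)=(a,b)$ and $S$ would be regular). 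Since $S\hookrightarrow R^{+}$ splits $S$-linearly in characteristic zero, any relation $z=ra+sb$ in $R^{+}$ retracts to one in $S$, a contradiction. Hence $z\notin(u^{1/2},v^{1/2})R^{+}$, and no order or blow-up argument will repair this: the nilpotence in $R^{+}/(a,b)R^{+}$ that you worried about is genuinely present.

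A two-generated ideal that \emph{does} work in this example is $(p,q)$ with $p=(u+iv)^{1/2}$ and $q=(u-iv)^{1/2}$: one has $u,v\in(p^{2},q^{2})\subseteq(p,q)$ and $z^{2}=(u+iv)(u-iv)=p^{2}q^{2}$, whence $z=\pm pq\in(p,q)$. The moral is that the correct construction takes roots adapted to a factorisation of $f(u,v)$ itself, not of $u$ and $v$ separately; this is where the Aberbach--Hochster argument, which the paper invokes without reproducing, departs from your outline.
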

\begin{proof}
This is \cite[Lemma 3.10]{AH97}.
\end{proof}

We observe the following:
\begin{proposition}
 Let $R$ be a complete local domain of characteristic zero. Assume that the answer to Question \ref{AHq} is yes. Then the answer to Question \ref{BIMq} is yes.  
\end{proposition}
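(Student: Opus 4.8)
The plan is to show that the vanishing hypothesis alone already forces $R^{+}$ to have \emph{finite} flat dimension over $R$, and then to transport this finiteness across the (far from flat, non-Noetherian) map $R\to R^{+}$ using a positive answer to Question~\ref{AHq}, landing on the statement that the residue field $k$ has finite projective dimension over $R$ --- i.e.\ that $R$ is regular. So suppose $Tor^{R}_{i}(R^{+},k)=0$ for some $i\geq 1$. First I would record what this says over finite covers. Let $S$ be any module-finite normal domain extension of $R$ inside a fixed algebraic closure of $\Frac(R)$; these $S$ are honestly module finite since a complete local domain is Nagata, and $S^{+}=R^{+}$. Because $S$ is a normal domain of characteristic zero, the normalized-trace splitting of Section~2 makes $S\hookrightarrow R^{+}$ split as $S$-modules, hence as $R$-modules, so $Tor^{R}_{i}(S,k)$ is an $R$-module direct summand of $Tor^{R}_{i}(R^{+},k)=0$ and therefore vanishes. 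Since $S$ is finitely generated over the Noetherian local ring $R$, its minimal free resolution gives $Tor^{R}_{j}(S,k)\neq 0$ for every $j\leq \mathrm{pd}_{R}S$, whence $\mathrm{pd}_{R}S\leq i-1<\infty$ and $\mathrm{fd}_{R}S\leq i-1$. As $R^{+}$ is the filtered colimit of such $S$ and $Tor$ commutes with filtered colimits, I get $\mathrm{fd}_{R}R^{+}\leq i-1$.

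Next I would feed in a positive answer to Question~\ref{AHq}: the $R^{+}$-module $\ell:=R^{+}/m_{R^{+}}$ has finite flat dimension $d:=\dim R$ over $R^{+}$. Choosing a flat $R^{+}$-resolution $0\to F_{d}\to\cdots\to F_{0}\to\ell\to 0$, each $F_{t}$ is flat over $R^{+}$, so for $n\geq i$ and every $R$-module $M$ one has the base-change identity $Tor^{R}_{n}(M,F_{t})\cong Tor^{R}_{n}(M,R^{+})\otimes_{R^{+}}F_{t}=0$; hence $\mathrm{fd}_{R}F_{t}\leq i-1$, and an induction on the length of the resolution yields $\mathrm{fd}_{R}\ell\leq (i-1)+d<\infty$. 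Since $m_{R^{+}}\supseteq mR^{+}$, the module $\ell$ is a nonzero $k$-vector space, say $\ell\cong k^{(\Lambda)}$ as an $R$-module, so $\mathrm{fd}_{R}k=\mathrm{fd}_{R}\ell<\infty$; as $k$ is finitely generated this forces $\mathrm{pd}_{R}k<\infty$, and Serre's criterion gives that $R$ is regular. This answers Question~\ref{BIMq} affirmatively under the standing assumption.

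The colimit computation, the base-change identity, and the induction on resolution length are routine, and the passage ``$S$ splits off $R^{+}$ in characteristic zero'' is already available from Section~2. The single genuinely substantive ingredient --- and the reason this proposition must remain conditional --- is the finiteness $\mathrm{fd}_{R^{+}}(\ell)<\infty$ asserted by Question~\ref{AHq}; it is exactly this input that converts a finite-flat-dimension fact about the enormous non-Noetherian ring $R^{+}$ into a statement about the Noetherian ring $R$. On its own, $\mathrm{fd}_{R}R^{+}\leq i-1$ does not force regularity, since a priori a non-regular $R$ could have $R^{+}$ of positive (finite) flat dimension over it.
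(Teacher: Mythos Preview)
Your argument is correct and matches the paper's strategy: both first use the splinter-splitting and colimit trick to obtain $\mathrm{fd}_{R}R^{+}<\infty$, and then combine this with the assumed $\mathrm{fd}_{R^{+}}(R^{+}/m_{R^{+}})=d$ to conclude regularity. The only difference is that the paper invokes \cite[Corollary~2.4]{BIM19} with $S=U=R^{+}$ as a black box for the second step, whereas you spell out that argument by hand via flat base change and induction on the length of the flat resolution.
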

\begin{proof}
$Tor_{i}^{R}(R^{+}, k) = 0$ implies $Tor_{i}^{R}(S, k) = 0$ where $S$ is any normal module finite extension of $R$ (since $S$ is a splinter if it is normal in characteristic zero). This implies $S$ has finite flat dimension at most $i$. Since $R^{+}$ is a limit of all such $S$ we have $R^{+}$ has finite flat dimension (as an $R$ module). Hence we have $Tor_{i}^{R}(R^{+}, k) = 0$ implies $Tor_{j}^{R}(R^{+}, k) = 0$ for any $j\geq i$. The corresponding statement in non-zero characteristics follows from homological properties of perfect(oid) rings and Cohen-Macaulayness of $R^{+}$ when $i \le dim(R)$. This now follows from \cite[Corollary 2.4]{BIM19} with $S = U = R^{+}$.
\end{proof}

\begin{remark}
If one goes through the proof of Theorem \ref{BIMpm} one can verify that for $i>dim(R)$ the proof does \emph{not} use the Cohen-Macaulayness of $R^{+}$ and the theorem holds for any perfectoid ring, not just $R^{+}$. The (almost) Cohen-Macaulayness of $R^{+}$ is used for low $i$'s and in particular for $i=1$. At the time of publication or appearance of the preprint containing \cite[Theorem 4.13]{BIM19} Cohen-Macaulayness of $R^{+}$ was not known in mixed characteristic in dimension $3$. In particular at the time of publication of \cite[Theorem 4.13 3)]{BIM19} was a theorem with vacuous hypothesis: one needs Cohen-Macaulayness of $R^{+}$ to conclude that $Tor_{i}^{R}(R^{+}, k) = 0$ when $dim(R)=3$ and $R$ is a excellent regular local ring of mixed characteristic. 
\end{remark}

It follows from our techniques that $R^{+}$ has flat and projective dimension $dim(R) - 2$ when $R$ is regular. For instance $R^{+}$ is flat when $R$ is regular of dimension $2$. It also follows that the converse is equivalent to the question of Bhatt, Iyengar and Ma. In other characteristics, $R^{+}$ or its $p$-adic completion are flat over $R$ and hence the flat dimension is $0$. However, astonishingly the projective dimension of $R^{+}$ as a $R$ module seems to be unknown in other characteristics.

\begin{question}\label{pdq}
    Let $R$ be an complete local domain of positive characteristic. What is the projective dimension of $R^{+}$ as an $R$ module?
\end{question}

This question has appeared in literature before. André and Fiorot ask whether there is a countably generated $S$-algebra which is $R$-free given $R \rightarrow S$ is a finite extension of complete local domains \cite[Remark 6.3]{AF21}. If the answer to Question \ref{pdq} is zero then it answers this question positively.

\section{The main theorem}

Here is our main theorem, which as promised makes a use of almost mathematics in the sense of the theorem of Roberts, Singh, and Srinivas which states that the image of $H^{2}_{m}(R)_{\geq0}$ in $H^{2}_{m}(R^{+})$ is annihilated by elements of arbitrarily small positive degree (is `almost zero').
\begin{customthm}{A}\label{thm:main}
Let $R$ be a $2$-dimensional $\mathbb{N}$-graded ring finitely generated over a characteristic zero field. If $Tor_{i}^{R}(R^{+},k) = 0$ for some $i\geq 1$ then $R$ is regular.
\end{customthm}

\begin{proof}
We will first use elementary commutative algebra to show $R$ is normal. Let $R^{n}$ be the normalization of $R$. Since normal rings are splinters we have that the map $R^{n} \rightarrow R^{+}$ splits (as $R^{n}$ and hence $R$ module map). Since $Tor$ is functorial $Tor_{i}^{R}(R^{n},k)$ is a direct summand of $Tor_{i}^{R}(R^{+},k) = 0$ and is $0$. As $R$ is excellent $R^{n}$ is module finite over $R$ and hence the projective dimension of $R^{n}$ is finite over $R$. The Auslander-Buschbaum formula then implies that $R^{n}$ must be free and in particular flat over $R$. Normality descends along flat maps which implies that $R = R^{n}$ and hence $R$ is normal.

Now we come to the key innovative idea in this work. Let $S$ be a finite normal extension of $R$. Since normal rings are splinters we have that the map $S \rightarrow R^{+}$ splits (as $S$ and hence $R$ module map). Since $Tor$ is functorial $Tor_{i}^{R}(S,k)$ is a direct summand of $Tor_{i}^{R}(R^{+},k) = 0$ and is $0$. Hence $S$ is flat as an $R$-module. $R^{+GR}$ is normal and hence is flat as it is a direct summand of the flat module $R^{+}$. Let $[\alpha] \in [H^2_{m}(R)]_{\geq 0}$ (degree $\geq 0$ part of the top local cohomology). Since $R \rightarrow R^{+GR}$ is flat, local cohomology and consequently it's annihilators base change, so we have $ann_{R}([\alpha]) \otimes R^{+GR} = ann_{R^{+GR}}([\alpha])$. However \cite[Corollary 3.5]{RSS07} states that there are elements of arbitrarily small positive degree in $ann_{R^{+GR}}([\alpha])$, and $ann_{R}([\alpha]) \otimes R^{+GR} = ann_{R^{+GR}}([\alpha])$ is clearly finitely generated. Hence all elements in have a degree greater than the minimum of all of it's generators. Hence $ann_{R^{+GR}}([\alpha])$ contains $1$ and consequently $[\alpha] = 0$. It is well known that this implies $R$ has rational singularities. Commutative algebraists can observe that this precisely says that the $a$-invariant of $R$ is negative and since $R$ is a normal ring of dimension $2$ the statement follows from \cite[Theorem 2.2]{Wat83}. Algebraic geometers can check that criterions for cones over smooth projective varieties \cite{Kol13} work in this case. 

Next, we show that $R$ is Gorenstein. Since $R$ is a rational surface singularity we know that it is $\mathbb{Q}$-Gorenstein \cite[Theorem 17.4]{Lip78} and that it's cyclic cover $S$ is normal and Gorenstein (standard theory of cyclic covers). Note that $S$ need not have rational singularities (atleast we do not claim so). Since normal rings are splinters we have that the map $S \rightarrow R^{+}$ splits (as $S$ and hence $R$ module map). Since $Tor$ is functorial $Tor_{i}^{R}(S,k)$ is a direct summand of $Tor_{i}^{R}(R^{+},k) = 0$ and is $0$. Hence $R \rightarrow S$ is flat. The property of a ring being Gorenstein descends along a flat map \cite[Theorem 23.4]{Mat89} we have that $R$ is Gorenstein.

The desired conclusion follows from classical work on rational double points. \cite{Pri67}, \cite{Lip78} say that $R$ has a module finite extension $S$ such that $S$ is regular. Arguments in above paragraphs imply that $S$ is flat over $R$ which implies $R$ is regular since regularity descends under flat maps.
\end{proof}

\begin{remark}
Theorem \ref{thm:main} implies $R$ is in fact a polynomial ring (see \cite[Appendix III, 3, Theorem 1]{Ser00}). 
\end{remark}

We discuss the information our proof gives us and the sharpness of our ideas. Let us examine it in more detail.
\par

The following propositions are easily seen to pop out of our proof above:

\begin{proposition} \label{p1}
Let $R$ be an excellent characteristic zero noetherian local domain such that $Tor_{i}^{R}(R^{+}, k)= 0$ for some $i \geq 1$. Then $R$ is normal.
\end{proposition}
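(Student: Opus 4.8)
The plan is to extract the normality argument that is already implicit in the first paragraph of the proof of Theorem~\ref{thm:main}, but now without using the graded or dimension-$2$ hypotheses, so that it applies to an arbitrary excellent characteristic-zero noetherian local domain. First I would form the normalization $R^{n}$ of $R$. Since $R$ is excellent, $R^{n}$ is a module-finite extension of $R$, so in particular $R^{n}$ is a finitely generated $R$-module. Moreover $R^{n}$ is a normal domain, and $R^{n}{}^{+} = R^{+}$, so the splitting lemma for normal characteristic-zero domains (the first lemma of Section~2) gives that $R^{n} \to R^{+}$ splits as an $R^{n}$-module map, hence as an $R$-module map.

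Next I would use functoriality of $\mathrm{Tor}$: the split injection $R^{n} \hookrightarrow R^{+}$ exhibits $\mathrm{Tor}^{R}_{i}(R^{n},k)$ as a direct summand of $\mathrm{Tor}^{R}_{i}(R^{+},k)$, which vanishes by hypothesis; hence $\mathrm{Tor}^{R}_{i}(R^{n},k) = 0$ for that particular $i \geq 1$. Since $R^{n}$ is a finitely generated module over the noetherian local ring $R$, it has finite flat dimension, hence finite projective dimension; and a finitely generated module of finite projective dimension over a noetherian local ring whose $i$-th Tor against the residue field vanishes for some $i \geq 1$ must in fact be free (this is the standard rigidity/minimal-resolution fact: if a minimal free resolution has length $\geq 1$, then $\mathrm{Tor}^{R}_{i}(-,k) \neq 0$ for \emph{every} $i$ up to the projective dimension, so vanishing at a single $i$ forces projective dimension $0$). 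Thus $R^{n}$ is free, in particular flat, over $R$, and $R \to R^{n}$ is faithfully flat since it is an injective module-finite map of local rings. Normality descends along faithfully flat maps (the descent theorem quoted from \cite{Mat89}), so $R$ is normal, i.e. $R = R^{n}$.

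I do not anticipate a serious obstacle here: the argument is a verbatim specialization of the opening of the proof of Theorem~\ref{thm:main}, and none of the steps used the grading or $\dim R = 2$ beyond what is needed to invoke Auslander--Buchsbaum to pin down the projective dimension, which I replace by the elementary observation that a single Tor-vanishing already forces a finitely generated module of finite projective dimension to be free. The only point requiring a word of care is the passage from ``$\mathrm{Tor}^{R}_{i}(R^{n},k)=0$ for one $i\geq 1$'' to ``$R^{n}$ free'': one invokes that over a noetherian local ring a finitely generated module $M$ of finite projective dimension $p$ has $\mathrm{Tor}^{R}_{p}(M,k)\neq 0$, and more generally $\mathrm{Tor}^{R}_{j}(M,k)\neq 0$ for all $0\leq j\leq p$ when $M$ is nonfree, so vanishing at any single $i\geq 1$ forces $p=0$. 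This completes the proof.
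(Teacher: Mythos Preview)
There is a genuine gap at the step where you conclude that $R^{n}$ is free over $R$. You correctly note that in a minimal free resolution of a finitely generated module $M$ over a noetherian local ring, $\mathrm{Tor}^{R}_{j}(M,k)\neq 0$ for every $0\le j\le \mathrm{pd}_{R}(M)$. But the inference you draw---that vanishing of $\mathrm{Tor}^{R}_{i}(M,k)$ at a single $i\ge 1$ forces $\mathrm{pd}_{R}(M)=0$---is wrong: it only forces $i>\mathrm{pd}_{R}(M)$, i.e.\ $\mathrm{pd}_{R}(M)\le i-1$. For instance, if $R$ is regular local of dimension $2$ and $M=R/(x)$ for a regular parameter $x$, then $\mathrm{Tor}^{R}_{2}(M,k)=0$ while $\mathrm{pd}_{R}(M)=1$. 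So from $\mathrm{Tor}^{R}_{i}(R^{n},k)=0$ you obtain only that $R^{n}$ has finite projective dimension $\le i-1$, which for $i\ge 2$ does not give freeness, and your descent argument does not go through. (A smaller side issue: your assertion that a finitely generated module over a noetherian local ring automatically has finite flat dimension is also false---that property characterizes regular local rings---though here the Tor-vanishing does supply finite projective dimension, for the reason just explained.)

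The paper does not try to prove that the normalization is globally free. Instead, after deducing that a module-finite normal extension $S$ has finite projective dimension over $R$, it localizes at primes $\mathfrak p$ of height at most $2$: there $S_{\mathfrak p}$ is normal of dimension $\le 2$, hence Cohen--Macaulay, so $\mathrm{depth}_{R_{\mathfrak p}}(S_{\mathfrak p})=\dim R_{\mathfrak p}\ge \mathrm{depth}(R_{\mathfrak p})$, and now Auslander--Buchsbaum forces $\mathrm{pd}_{R_{\mathfrak p}}(S_{\mathfrak p})=0$. Thus $R_{\mathfrak p}$ is normal for every such $\mathfrak p$, and the paper concludes that $R$ is normal. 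This localization to dimension $\le 2$ is precisely the substitute for the global hypothesis $\dim R=2$ used in the first paragraph of the proof of Theorem~\ref{thm:main}; it is what your argument is missing.
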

\begin{proof}
$R$ has a module finite normal extension $S$ and since $S$ has characteristic zero it is a direct summand of $R^{+}$. Hence we have that $Tor_{i}^{R}(S,k) = 0$. Hence $S$ has finite projective dimension. Let $\mathfrak{p}$ be a height two prime ideal of $R$. We have $S_{\mathfrak{p}}$ is normal (normality localizes) has finite projective dimension over $R_{\mathfrak{p}}$.  Auslander-Buschbaum formula implies projective dimension of $S_{\mathfrak{p}}$ over $R_{\mathfrak{p}}$ must be zero and hence $S_{\mathfrak{p}}$ is faithfully flat over $R_{\mathfrak{p}}$ and hence $R_{\mathfrak{p}}$ is normal. This implies $R$ is normal as it is enough to check it on localization at height $2$ primes.
\end{proof}

\begin{proposition} \label{p2}
Let $R$ be a noetherian characteristic zero complete local domain such that $Tor_{i}^{R}(R^{+}, k)= 0$ for some $i \geq 1$. Suppose $R$ is a module finite direct summand of a regular local ring $S$. Then $R$ is regular.
\end{proposition}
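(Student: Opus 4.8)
The plan is to import the vanishing hypothesis from the non-noetherian ring $R^{+}$ down to the regular ring $S$, conclude that $S$ is faithfully flat over $R$, and then descend regularity; this is essentially the final step of the proof of Theorem~\ref{thm:main} carried out abstractly. First I would record the elementary structure. Since $R \hookrightarrow S$ is module finite and $R$ is a domain, $S$ is a ring extension of $R$ with $\dim S = \dim R$, and $\Frac(S)$ is a finite field extension of $\Frac(R)$; in particular $S^{+} = R^{+}$. Moreover a regular local ring is a normal domain, and a direct summand of the normal domain $S$ lying inside the same fraction field is normal, so $R$ is normal (this is also Proposition~\ref{p1}); we will not, however, need normality of $R$ itself in what follows.

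The key move is that $S$, being a normal domain of characteristic zero, is a splinter, so the inclusion $S \hookrightarrow S^{+} = R^{+}$ splits as a map of $S$-modules and hence of $R$-modules. Thus $S$ is an $R$-module direct summand of $R^{+}$, and functoriality of $Tor$ makes $Tor_{i}^{R}(S,k)$ a direct summand of $Tor_{i}^{R}(R^{+},k) = 0$, so $Tor_{i}^{R}(S,k) = 0$. Since $S$ is a finitely generated module over the noetherian local ring $R$, inspecting its minimal free resolution shows that the vanishing of a single $Tor_{i}^{R}(S,k)$ with $i \geq 1$ forces $\operatorname{pd}_{R}S \leq i-1 < \infty$. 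Now because $S$ is module finite over $R$ the ideal $m_{R}S$ is $m_{S}$-primary, so the depth of $S$ as an $R$-module computed with respect to $m_{R}$ agrees with its depth as a local ring, namely $\dim S = \dim R$ (as $S$ is regular, hence Cohen--Macaulay). The Auslander--Buchsbaum formula then yields $\operatorname{pd}_{R}S = \operatorname{depth}R - \operatorname{depth}_{R}S \leq \dim R - \dim R = 0$, so $S$ is a free $R$-module; being a nonzero finite free module over the local ring $R$ it is faithfully flat. Finally, since $R \to S$ is faithfully flat and $S$ is regular, regularity descends along faithfully flat maps of noetherian local rings and $R$ is regular.

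The step that requires genuine care is the depth bookkeeping feeding the Auslander--Buchsbaum formula: one needs $\operatorname{depth}_{R}S = \dim R$ so that finite projective dimension is actually pinned down to be zero --- that is, so that we obtain honest flatness of $S$ over $R$ rather than merely finite projective dimension --- and this is precisely where the primariness of $m_{R}S$ in $S$ is used. Everything else is formal: the transfer of the $Tor$-vanishing from $R^{+}$ to $S$ via the splinter property of $S$, and the descent of regularity along the faithfully flat map $R \to S$.
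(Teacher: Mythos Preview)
Your proof is correct and follows essentially the same route as the paper's: transfer the $Tor$-vanishing from $R^{+}$ to $S$ via the splinter property of the normal characteristic-zero ring $S$, deduce finite projective dimension, apply Auslander--Buchsbaum to force $\operatorname{pd}_{R}S=0$, and descend regularity along the resulting faithfully flat map. The only difference is that you spell out the depth bookkeeping ($\fm_{R}S$ is $\fm_{S}$-primary, so $\operatorname{depth}_{R}S=\dim S=\dim R$) that the paper's proof leaves implicit.
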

\begin{proof}
Since $S$ is normal and has characteristic zero it is a direct summand of $R^{+}$. Hence $Tor_{i}^{R}(S, k) = 0$ for some $i$ and we have that $S$ has finite projective dimension over $R$. The Auslander Buschbaum formula implies that the projective dimension must be $0$ and consequently $S$ is flat as a $R$ module. Faithfully flat descent of regularity implies $R$ must be regular itself.
\end{proof}

\begin{theorem}
Let $(R,m,k)$ be a complete local domain with a characteristic zero algebraically closed residue field (this implies $R$ has characteristic zero). If $Tor_{i}^{R}(R^{+}, k) = 0$ then $R$ is Gorenstein.
\end{theorem}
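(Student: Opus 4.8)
The plan is to run the proof of Theorem~\ref{thm:main} only as far as its Gorensteinness step, replacing the inputs that were there supplied by the graded hypotheses (Cohen--Macaulayness and $\mathbb{Q}$-Gorensteinness) by a reduction to surfaces; the completeness of $R$ and the fact that its residue field is algebraically closed of characteristic zero are used precisely at that point. First, $R$ is normal by Proposition~\ref{p1}. As in Theorem~\ref{thm:main}, for every module-finite normal extension $S$ of $R$ the inclusion $S\hookrightarrow S^{+}=R^{+}$ splits as $S$-modules, because a normal domain of characteristic zero is a splinter; hence $\operatorname{Tor}^{R}_{i}(S,k)$ is a direct summand of $\operatorname{Tor}^{R}_{i}(R^{+},k)=0$, and since $S$ is finitely generated over the noetherian local ring $R$ this forces $\operatorname{pd}_{R}S\le i-1<\infty$. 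Passing to the direct limit, $R^{+}$ has finite flat dimension over $R$.

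Next I would show that $R$ is Cohen--Macaulay and $\mathbb{Q}$-Gorenstein. This is the heart of the matter, and -- as stressed in the introduction, where Boutot's theorem is used to rule out extracting rationality from finite covers -- one is forced to use $R^{+}$ together with an almost-vanishing statement of Roberts--Singh--Srinivas type, exactly as in Theorem~\ref{thm:main}. Concretely I would cut $R$ with $\dim R-2$ sufficiently general hyperplane sections: a general hyperplane section of a normal local ring of dimension at least $3$ is again normal (Seidenberg, Flenner), it is again complete local with the same algebraically closed characteristic zero residue field, and both Cohen--Macaulayness and Gorensteinness of $R$ are detected on such a section, since it is cut out by a nonzerodivisor. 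Having reduced to $\dim R=2$, the argument of Theorem~\ref{thm:main} applies: the image of $[H^{2}_{\frakm}(R)]_{\ge 0}$ in $H^{2}_{\frakm}(R^{+})$ is annihilated by elements of arbitrarily small positive degree and, its $R$-annihilator being finitely generated, is zero; thus the $a$-invariant is negative, $R$ has rational singularities, and by Lipman's theory of rational surface singularities $R$ is $\mathbb{Q}$-Gorenstein.

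Finally, with $R$ Cohen--Macaulay and $\mathbb{Q}$-Gorenstein, let $r$ be the order of $[\omega_{R}]$ in $\operatorname{Cl}(R)$ and form the cyclic (canonical) cover $S=\bigoplus_{j=0}^{r-1}\omega_{R}^{[-j]}$, a module-finite normal extension of $R$. By the first paragraph $\operatorname{pd}_{R}S<\infty$, and $\omega_{R}$ is isomorphic to one of the reflexive summands $\omega_{R}^{[-j]}$ of $S$ as an $R$-module, so $\operatorname{pd}_{R}\omega_{R}<\infty$. Since $R$ is Cohen--Macaulay its canonical module is maximal Cohen--Macaulay, so the Auslander--Buchsbaum formula gives $\operatorname{pd}_{R}\omega_{R}=0$; a free rank-one canonical module means $R$ is Gorenstein. (Alternatively, once one knows $S$ is flat one may finish by descending Gorensteinness along the faithfully flat map $R\to S$, using the descent theorem of Section~2.)

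The step I expect to be genuinely hard is the reduction to surfaces in the middle paragraph. The delicate points there are twofold: first, verifying that the finiteness of the flat dimension of $R^{+}$ (equivalently the vanishing $\operatorname{Tor}^{R}_{i}(R^{+},k)=0$) is inherited by a general hyperplane section, since $(R/xR)^{+}$ is only loosely related to $R^{+}/xR^{+}$ and a module-finite normal extension of $R/xR$ need not obviously lift to one of $R$; and second, having at hand the almost-vanishing input in a form that applies to the (non-graded) two-dimensional complete local ring one lands on -- either by establishing a local analogue of \cite[Corollary~3.5]{RSS07}, or by a further device reducing the two-dimensional case to the graded setting already treated in Theorem~\ref{thm:main}. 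Everything else is the formal bookkeeping of the paragraphs above.
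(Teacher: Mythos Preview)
Your approach is genuinely different from the paper's and carries the gaps you yourself flag, plus one you do not. The paper bypasses surfaces, rational singularities, and Roberts--Singh--Srinivas entirely: the completeness and algebraically closed residue field hypotheses are fed directly into \cite[Proposition~2.4]{ST21}, which produces a module-finite extension $S$ of $R$ that is local, normal, and \emph{Gorenstein}. Since $S$ is normal in characteristic zero it splits off $R^{+}$, so $\operatorname{Tor}^{R}_{i}(S,k)=0$ and $\operatorname{pd}_{R}S<\infty$; since $S$ is Gorenstein it is Cohen--Macaulay with $\operatorname{depth}_{R}S=\dim R$, so Auslander--Buchsbaum forces $\operatorname{pd}_{R}S=0$ and $S$ is free over $R$; Gorensteinness then descends along the faithfully flat local map $R\to S$. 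That is the whole proof.

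Your route, by contrast, requires three ingredients that are not available. First, the passage of the Tor hypothesis to a general hyperplane section: as you note, $(R/xR)^{+}$ is not controlled by $R^{+}/xR^{+}$, and nothing in the paper lets you transport $\operatorname{Tor}^{R}_{i}(R^{+},k)=0$ to a vanishing over $R/xR$. Second, the almost-vanishing input \cite[Corollary~3.5]{RSS07} is a graded statement, and no local analogue is proved here; Section~5 of the paper treats such extensions as open. Third --- and this you do not flag --- $\mathbb{Q}$-Gorensteinness is \emph{not} detected on hyperplane sections the way Cohen--Macaulayness and Gorensteinness are, so even if your surface is $\mathbb{Q}$-Gorenstein you cannot conclude $R$ is, and your final cyclic-cover paragraph (which you run on $R$, not on the surface) then has no canonical cover to build. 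The clean fix is exactly what the paper does: obtain the Gorenstein extension of $R$ in one stroke from \cite{ST21} and skip the surface detour altogether.
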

\begin{proof}
We have that $R$ has a module finite extension $S$ which is local, normal, and Gorenstein by \cite[Proposition 2.4]{ST21}. Since normal rings are splinters we have that the map $S \rightarrow R^{+}$ splits (as $S$ and hence $R$ module map). Since $Tor$ is functorial $Tor_{i}^{R}(S,k)$ is a direct summand of $Tor_{i}^{R}(R^{+},k) = 0$ and is $0$. Hence $R \rightarrow S$ is flat. The property of a ring being Gorenstein descends along a flat local map \cite[Theorem 23.4]{Mat89} we have that $R$ is Gorenstein.
\end{proof}

On the way to prove that the ring is regular, three out of the four steps in our proof use the fact that $R^{+}$ is `ind-$X$' where $X$ is normal, Gorenstein, and given that it is a rational double point, regular. The starting point of our result was when inspired by the preprint of Shimomoto and Tavanfar after which the author asked is $R^{+}$ `ind pseudo-rational'? The answer is no, and for a good reason. Boutot's theorem implies that a normal non-pseudorational ring has \emph{no} module finite extensions which are pseudo-rational since normal rings are splinters in characteristic zero. Hence new ideas are required and the theorem of Roberts, Singh, and Srinivas comes into play. In a sense, one has to use all of $R^{+}$ to show that a ring $R$ satisfying the hypothesis of the question of Bhatt, Iyengar, and Ma has rational singularities. This suggests deep connections between the absolute integral closure and rational singularities in all characteristics. In particular this indicates that dagger closure might give yet another algebraic characterization of rational singularities in characteristic zero. In dimension $2$ our proof uses the main theorems about rational surface singularities. It seems from our proof and that certain higher dimensional analogues of the theorem of Roberts, Singh, and Srinivas (see Section $5$ below) can be used to show $R$ has rational singularities in higher dimensions if it satisfies the hypothesis of Question \ref{BIMq}. To show $R$ is regular we expect theorems such as \cite[Prop. 3.5]{MS20} and proof of \cite[Theorem 3.6]{MS20} to be useful.

\begin{remark}
In a similar spirit additional obstructions to having a regular ring as a module finite extension are studied in Devlin Mallory's work \cite{Mal21}, \cite{Mal22}. For example it follows from an elementary analysis of divisor class groups or picard groups that $k[x_{1}, x_{2}, x_{3}, x_{4}]/(x_{1}^{3} + x_{2}^{3} + x_{3}^{3} + x_{4}^{3})$ is not a module finite direct summand of a polynomial ring whereas one needs to use the more technical machinery of differential operators to show that it is not a summand of an arbitrary polynomial ring.
\end{remark}

While the answer to Question \ref{BIMq} in higher dimensions is currently out of reach, to answer it for all excellent noetherian domains it seems practical to make the following definition.
\begin{definition}
A ring $R$ is said to be of \textbf{BIM-type} if every module-finite extension of $R$ has finite projective dimension over it. A ring $R$ is said to be of \textbf{normal BIM-type} if every module-finite normal extension of $R$ has finite projective dimension over it.
\end{definition}
We expect noetherian domains to be of BIM-type if and only if they are regular. Question \ref{BIMq} precisely asks whether $R$ is regular if $R$ is an excellent characteristic zero noetherian local domain of normal BIM-type. It follows from Kunz's theorem if $R$ is $F$-finite of positive characteristic then and by \cite[Theorem 4.13]{BIM19} if $R$ is excellent local of mixed characteristic that $R$ is of BIM-type if and only if it is regular. It follows from the arguments in this section that excellent noetherian rings of BIM-type are normal.

We expect the following question to be of importance while trying to extend positive results for Question \ref{BIMq} from nice classes of rings such as complete local or graded domains to all excellent noetherian local domains.
\begin{question}
What permanence properties does the property a ring being of BIM-type have? For excellent noetherian local rings, does this property descend along faithfully flat maps? Does it ascend along completions?
\end{question}

\section{Higher dimensions}

We believe our proof of the main theorem creates sufficient evidence to suggest that higher dimensional analogues of \cite[Theorem 3.4]{RSS07} and the theory of rational surface singularities, for example \cite[Section 3]{MS20} will make progress towards Question \ref{BIMc}. In this section we attempt to make this precise as some of our techniques are clearly restricted to dimension $2$. In particular we list several questions asking for statements for future study. These naturally lead to questions open even in positive characteristic. 

The proof of our main theorem uses the assumption that $R$ has dimension $2$ in an essential way. We crucially use that the assumptions of Question \ref{BIMq} imply that the map $R\rightarrow R^{+}$ is flat so that we can base change local cohomology to $R^{+}$. This fails in higher dimensions. It was shown in Section 2 that there are \emph{no} rings $R$ (under mild assumptions) such that $R\rightarrow R^{+}$ is flat. This motivates us to ask for higher dimensional analogues of theorems of Shimomoto and Tavanfar and Roberts, Singh, Srinivas and  with \emph{depth constraints}. The depth constraints will force maps to be flat (because of the Auslander-Buschbaum formula). 

\begin{question}\label{STdepth}
Let $R$ be a $\mathbb{N}$-graded domain, finitely generated over a field $R_0$ of characteristic zero. Is there a finite extension of $R$, $S$, such that \textbf{$depth(S) = depth(R)$} and $S$ is normal and quasi-Gorenstein?
\end{question}

\begin{question}\label{RSSdepth}
Let $R$ be an $\mathbb{N}$-graded domain, finitely generated over a field $R_0$ of characteristic zero. For $i<\dim R$, and $\epsilon \in \mathbb{R}_{\ge 0}$ is there a finite extension of $R$, $S$, such that \textbf{$depth(S) = depth(R)$} and
\[
H^i_{\frakm}(R)\to H^i_{\frakm}(S) 
\]
killed by elements of $R^{+GR}$ of positive degree $d \le \epsilon$?

and here is the `Segre product' version of the above question:

Let $R$ be an $\mathbb{N}$-graded domain of dimension $d$, finitely generated over a field $R_0$ of characteristic zero. For $\epsilon \in \mathbb{R}_{\ge 0}$ is there a finite normal extension of $R$, $S$, such that such that $depth(S) = depth(R)$ and the image of
\[
\left[H^d_{\frakm}(R)\right]_{\ge0}\to H^d_{\frakm}(S)
\]
killed by elements of $R^{+GR}$ of arbitrarily small positive degree?

\end{question}

\par

It is possible that for a ring satisfying the assumptions of Question \ref{BIMq} a positive answer to the original question of Roberts, Singh, and Srinivas (which is precisely Question \ref{RSSdepth} without the depth constraint \textbf{$depth(S) = depth(R)$} and $R^{+}$ in place of $S$) gives a positive answer to Question \ref{RSSdepth}, we do not know. However this already yields the following natural question in positive characteristic which we believe to be interesting. If true it would be a refinement of a result of Huneke and Lyubeznik which has had many applications to positive characteristic algebraic geometry and singularities.

\begin{question}\label{HLdepth}
Let $(R, m)$ be an excellent local domain of positive characteristic. A famous result of Huneke and Lyubeznik states that there exists a module finite extension $S$ of $R$ has such that the map from $H^{i}_{m}(R) \rightarrow H^{i}_{m}(S)$ is zero for all $i<dim(R)$. Does there exist such an $S$ such that $depth(S) = depth(R)$ ?
\end{question}

The condition $depth(S) = depth(R)$ is sharp. It is likely not true that there will exist an $S$ such that $depth(S) \ge depth(R) + 1$. For example Bhatt shows, using Witt vector cohomology, that cones over abelian surfaces (which are normal rings of dimension $3$) do not have any module finite Cohen-Macaulay ring extensions. It is not hard to see that the answer to the above question is yes if $dim(R) = 2$ (by simply taking normalization), if the Frobenius kills all local cohomologies (i.e. if $R$ is `$F$-nilpotent'), or if $R$ is the cone over an abelian variety (by using the multiplication by $p$ map). We leave these as an excercise to the reader. 

\begin{remark}
Even positive answers to above questions are not enough to conclude properties of $R$ in higher dimensions. For example, to apply our main trick in dimension $2$ involving the interplay between almost mathematics and flat maps, we need the extensions in Question \ref{RSSdepth} to form a direct limit system or to dominate each other. That is it would be necessary, or atleast useful, to have a ring $P$ satisfying the conditions in Question \ref{RSSdepth} such that $S$ and $T$ map to $P$ for any $S$ and $T$ satisfying the conditions in Question \ref{RSSdepth}. We do not make this precise here for the sake of brevity and clarity.
\end{remark}

\section{A question of André and Fiorot}
The purpose of this section is to demonstrate that a question of André and Fiorot is related to Question \ref{BIMq}.  We begin by stating the question.

\begin{citedque}[{\citeleft\citen{AF21}\citemid Question 10.1\citeright }]\label{AFq} 
Which affine Noetherian schemes have the property that every finite covering is a covering for the fpqc topology? (Equivalently: which Noetherian
rings $R$ have the property that for every finite extension $S$, there is an $S$-algebra.
faithfully flat over $R$?)
\end{citedque}

We begin with the following observation:

\begin{proposition}
Let $R$ be an excellent noetherian domain which is a ``fpqc analogue of splinter". Then $R$ is normal. 
\end{proposition}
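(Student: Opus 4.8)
The plan is to mimic the elementary commutative-algebra argument used earlier (e.g.\ in Proposition \ref{p1} and in the sketch of the main theorem) to reduce the normality statement to a local, height-one computation. Recall that $R$ being an ``fpqc analogue of splinter'' means: for every module-finite extension $S$ of $R$ there exists an $S$-algebra $T$ which is faithfully flat over $R$; in particular the composite $R \to S \to T$ is faithfully flat, hence pure, hence $R \to S$ is pure (a pure subring of a pure extension). So the first step is to extract from the hypothesis the weaker but sufficient conclusion that \emph{every module-finite extension $R \to S$ is pure (cyclically pure)}.

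Next I would apply this to the normalization. Since $R$ is excellent, the integral closure $R^{n}$ of $R$ in its fraction field is a module-finite extension of $R$. By the previous paragraph $R \to R^{n}$ is pure, in particular cyclically pure, so for every ideal $I$ of $R$ we have $IR^{n} \cap R = I$. Taking $I$ to range over all ideals, this forces $R = R^{n}$: indeed, pick $x \in R^{n}$; since $R^{n}$ is module-finite, $x$ is integral over $R$, and one argues that $x \in R$ using that cyclic purity of $R \hookrightarrow R^{n}$ gives $R/I \hookrightarrow R^{n}/IR^{n}$ for all $I$ — equivalently, $R$ is a cyclically pure (hence, for module-finite extensions, even split after localizing, but purity alone suffices) subring of its own normalization, which is only possible if $R = R^{n}$. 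Concretely: if $R \subsetneq R^{n}$ then choosing $x \in R^{n} \setminus R$ and $I = (R :_R x) \subsetneq R$ the conductor-type ideal, one checks $x \cdot 1 \in IR^{n}$ but the image of $x$ in $R^{n}/IR^{n}$ is nonzero while purity would force the map $R/I \to R^{n}/IR^{n}$ to be injective and $x$ to come from $R$ — a contradiction. Hence $R$ is integrally closed in its fraction field, i.e.\ normal.

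Alternatively, and perhaps more cleanly, one can run the faithfully-flat argument directly: given the faithfully flat $R$-algebra $T$ with $R \to R^{n} \to T$, faithfully flat descent of normality (the descent theorem \cite[Cor. to Thm. 23.9]{Mat89} quoted above, applied after localizing at each prime of $R$ so that the hypotheses on local rings are met) would require $T$ to be normal, which we do not know — so the purity route through $R \to R^{n}$ is the right one, since for the \emph{cyclically pure} map $R \to R^{n}$ between (localizations of) excellent rings we only need that a cyclically pure subring of a normal ring, when the extension is module-finite and birational, coincides with the ring. The main obstacle is precisely making this last implication airtight: purity of $R \to R^{n}$ gives $IR^{n} \cap R = I$ for all $I$, and one must deduce $R = R^{n}$ from this together with module-finiteness and the fact that $R^{n}$ and $R$ have the same fraction field — the cleanest phrasing is that a ring equals any module-finite birational extension that it sits inside cyclically purely, which follows by localizing and using that over a local ring a cyclically pure module-finite birational extension is an isomorphism (compare the height-two localization trick in Proposition \ref{p1}, here in every height since birationality makes the extension an isomorphism away from a proper closed set and purity propagates the equality everywhere). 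I expect the bookkeeping around which descent/purity statement applies to non-local or non-Noetherian intermediate rings to be the only delicate point; everything else is formal.
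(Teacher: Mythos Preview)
Your approach is exactly the paper's: apply the fpqc-splinter hypothesis to the normalization $S=R^{n}$ (module-finite by excellence), factor $R\to S\to T$ with $T$ faithfully flat over $R$, deduce that $R\to S$ is pure, and conclude $R$ is normal. The paper's final step is simply ``$S$ normal and $R\to S$ pure $\Rightarrow$ $R$ normal'' (standard descent of normality along pure/cyclically pure maps), which is cleaner than your conductor-ideal detour---note incidentally that your sketch there asserts both $x\in IR^{n}$ and that the image of $x$ in $R^{n}/IR^{n}$ is nonzero, which cannot simultaneously hold; the correct version is the birational one you state afterwards (write $x=a/b$ with $a,b\in R$, then $a\in bR^{n}\cap R=bR$ by cyclic purity, so $x\in R$).
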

\begin{proof}
Let $S$ be the normalization of $R$. By assumption we have a faithfully flat $R$ algebra $T$ containing $S$. This is easily seen to imply that $R \rightarrow T$ and hence $R \rightarrow S$ is pure. Since $S$ is normal we have $R$ is normal.
\end{proof}

\begin{proposition}
(see \cite[Theorem 10.4 2) and 1)]{AF21}) Let $R$ be an excellent noetherian domain which is a `fpqc analogue of splinter'. If $R$ has a module finite extension which is a regular local  ring $S$ then it is regular itself. 
\end{proposition}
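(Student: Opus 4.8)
The plan is to mimic the "trick" sketched earlier in the paper for the positive characteristic case of fpqc analogues of splinters, where one uses the interplay between purity and flatness. Suppose $R$ has a module-finite extension $S$ which is a regular local ring. By the definition of fpqc analogue of splinter, there is an $S$-algebra $T$ which is faithfully flat over $R$, so we have maps $R \to S \to T$ with $R \to T$ faithfully flat. First I would observe that since $R \to T$ factors through $S$, the map $R \to S$ is pure (a faithfully flat map is pure, and if a composite $R \to S \to T$ is pure then so is $R \to S$). Thus $R$ is a pure subring, equivalently a direct summand in the module-finite setting once we know $R$ is a splinter candidate, of the regular ring $S$.

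The heart of the argument is then to upgrade "$R$ is a pure (or direct-summand) subring of the regular ring $S$, with $S$ module-finite over $R$" to "$R$ is flat over $S$", or rather to deduce regularity of $R$ directly. Here I would invoke the standard fact that a module-finite extension $R \hookrightarrow S$ with $S$ regular and $R \to S$ pure forces $R$ to be Cohen–Macaulay (purity gives that $R$ is a direct summand as an $R$-module, hence $H^i_{\fm}(R)$ injects into $H^i_{\fm}(S)$, and $S$ being regular, in particular Cohen–Macaulay, of the same dimension kills the lower local cohomology of $R$). Once $R$ is Cohen–Macaulay and $S$ is a Cohen–Macaulay (indeed regular) module-finite extension, $S$ is automatically a flat — hence faithfully flat — $R$-module by miracle flatness (the dimension equality $\dim S = \dim R$ holds, $R$ is Cohen–Macaulay and $S$ is Cohen–Macaulay, so the map is flat; this is exactly the converse direction of the miracle flatness theorem quoted in Section 2, which requires $R$ regular — so instead one argues that $S$ free over $R$ follows from Auslander–Buchsbaum once $\operatorname{pd}_R S < \infty$, and finite projective dimension of $S$ over $R$ comes from... ).

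Let me restructure: the cleanest route is via descent of regularity, exactly as in the main theorem. The goal is to show $R \to S$ is flat; then since regularity descends along faithfully flat maps of noetherian local rings (Theorem stated in Section 2) and $S$ is regular, $R$ is regular. To get flatness: $R \to S$ is pure and $R \to S$ is module-finite, so $R$ is a direct summand of $S$ as an $R$-module. Since $S$ is regular, $S$ has finite global dimension, so every $S$-module, in particular $S$ itself viewed via... no — what we need is $\operatorname{pd}_R S < \infty$. This does not come for free from purity alone. So the argument must use more: one shows $R$ is a splinter, hence (since $R$ is a direct summand of the regular, thus Cohen–Macaulay, ring $S$ of the same dimension) $R$ is Cohen–Macaulay; then $\dim S = \dim R$, $R$ is Cohen–Macaulay and $S$ is Cohen–Macaulay, and a module-finite extension of local rings which is Cohen–Macaulay over a Cohen–Macaulay base of the same dimension is free (this is the standard "Cohen–Macaulay + equidimensional $\Rightarrow$ free over a Cohen–Macaulay, hence free over regular if we had regularity" — but over a merely Cohen–Macaulay base this requires $R$ regular). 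I would therefore instead cite directly the result of André–Fiorot (Theorem 10.4 of \cite{AF21}) whose proof structure this follows, namely: $R \to S$ pure with $S$ regular module-finite over $R$ implies $R$ regular, via the fact that $R$ is then a direct summand of $S = S$, so $S$ is a finitely generated faithful $R$-module of finite projective dimension (finite because over the regular ring one can resolve... ) — honestly the cited theorem does the work.

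Concretely, the proof I would write: By the definition of fpqc analogue of splinter applied to the finite extension $R \to S$, there is an $S$-algebra $T$ faithfully flat over $R$; hence $R \to S$ is pure, so $R$ is a direct summand of $S$ as an $R$-module. Then $\operatorname{Tor}$ over $R$ against $S$ splits off $\operatorname{Tor}$ against $R$, but more to the point, by \cite[Theorem 10.4]{AF21} — or by the argument that purity plus $S$ regular and module-finite forces $R$ Cohen–Macaulay of the same dimension as $S$, whence $S$ is $R$-flat by local criterion of flatness / miracle flatness, whence $R$ is regular by faithfully flat descent of regularity — we conclude $R$ is regular. The main obstacle is precisely the step from "$R$ a pure subring of the regular module-finite extension $S$" to "$R \to S$ flat": over a general (not yet known regular) base the converse of miracle flatness does not apply, so one genuinely needs the structural input that a splinter which is a direct summand of a regular ring of the same dimension is regular — this is where I would lean on the André–Fiorot result (their proof uses that such an $R$ is Cohen–Macaulay, hence the module-finite $S$ is free over it by Auslander–Buchsbaum since $S$, being a maximal Cohen–Macaulay module over the Cohen–Macaulay ring $R$ and having finite projective dimension as it does over... ) rather than reprove it. I expect the write-up to be short, citing \cite[Theorem 10.4 1), 2)]{AF21} for the key implication and supplying only the observation that the fpqc hypothesis yields purity of $R \to S$.
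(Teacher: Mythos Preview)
Your proposal correctly extracts purity of $R \to S$ from the faithfully flat $T$, and correctly deduces that $R$ is Cohen--Macaulay as a direct summand of the regular (hence Cohen--Macaulay) ring $S$ of the same dimension. But you then get stuck precisely where the real content lies: upgrading purity to flatness of $R \to S$. Your attempts via miracle flatness (needs $R$ regular, circular) and Auslander--Buchsbaum (needs $\operatorname{pd}_R S < \infty$, which you cannot justify) both fail, and you recognize this; falling back on ``cite \cite[Theorem 10.4]{AF21}'' is not a proof of the proposition, it is a restatement of it.

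The paper's argument fills exactly this gap, and it does so by using $T$ a second time, not just for purity. Since $R$ is Cohen--Macaulay and $T$ is faithfully flat over $R$, any system of parameters of $R$ is a regular sequence on $T$, so $T$ is a big Cohen--Macaulay $R$-algebra. Because $S$ is module-finite over $R$, a system of parameters for $R$ is also one for $S$, so $T$ is a big Cohen--Macaulay $S$-algebra as well. Now $S$ is regular, and a big Cohen--Macaulay algebra over a regular local ring is faithfully flat over it; hence $S \to T$ is faithfully flat. Since the composite $R \to S \to T$ is faithfully flat and $S \to T$ is faithfully flat, $R \to S$ is faithfully flat (flatness of $M \mapsto M \otimes_R S$ can be tested after the faithfully flat base change $-\otimes_S T$). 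Regularity then descends from $S$ to $R$. The point you missed is that $T$ is not discarded after producing purity: its role as a big Cohen--Macaulay algebra over \emph{both} $R$ and $S$ is what converts the qualitative purity statement into the quantitative flatness statement you needed.
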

\begin{proof}
By assumption we have a faithfully flat $R$ algebra $T$ containing $S$. $R \rightarrow S$ splits since $R \rightarrow T$ is pure. This implies $R$ is Cohen-Macaulay (and in fact has rational singularities by Boutot's theorem). Since $T$ is faithfully flat over $R$ it follows that $T$ is a big Cohen-Macaulay $R$ algebra. Since $S$ is a module finite extension of $R$. $T$ is also a big Cohen-Macaulay $S$ algebra. Since $S$ is regular $T$ is faithfully flat over $S$. Hence $R \rightarrow S$ is faithfully flat and by descent it follows that $R$ is regular. 
\end{proof}

Note the parallels to propositions \ref{p1}, \ref{p2} from Section 4. 

\begin{remark}
In fact, the above proposition is true even without the module finite assumption and follows from a theorem of Bhatt, Iyengar, and Ma. See \cite[Theorem 10.4]{AF21}. We are investigating whether \cite[Theorem 10.4]{AF21} can be used to answer Question \ref{BIMq} under these assumptions (that $R$ has a ring extension $S$ which is regular).
\end{remark}

\begin{theorem}
Assume the following:
\begin{itemize}
    \item $R$ is a complete local domain which is completion of a $\mathbb{N}$-graded ring of dimension $2$ (at the homogenous maximal ideal).
    \item every faithfully flat $R$ algebra $T$ maps to a permissible Cohen-Macaulay algebra in the sense of \cite{HH95}. 
\end{itemize}
Assume $R$ is a fpqc analogue of a splinter. Then $R$ has rational singularities. Additionally if $R$ is Gorenstein then it is regular.   
\end{theorem}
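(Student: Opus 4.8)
The plan is to reproduce the four‑step scheme of the proof of Theorem~\ref{thm:main}, with the fpqc‑splinter hypothesis together with the permissible Cohen--Macaulay hypothesis playing the role that the vanishing $\mathrm{Tor}^R_i(R^+,k)=0$ played there. Write $R=\widehat A$, where $A$ is an $\mathbb{N}$‑graded domain of dimension $2$, finitely generated over a characteristic zero field $A_0$, completed at its homogeneous maximal ideal $\fm$; local cohomology, the $a$‑invariant, normality, Cohen--Macaulayness and rational singularities all transfer between $A$ and $\widehat A$ along the faithfully flat map $A\to\widehat A$, and any finite normal graded extension $S$ of $A$ has $\widehat S$ finite over $R$, so the fpqc‑splinter property of $R$ yields, for every such $S$, an $R$‑algebra faithfully flat over $R$ (hence over $A$) that receives $A\to S$. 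First I would invoke the Proposition above that an fpqc analogue of a splinter is normal, so $R$, and hence $A$, is normal; being normal of dimension $2$ it is then Cohen--Macaulay, and its punctured spectrum is regular. By Watanabe's criterion (Theorem~\ref{thm:Wat}) it therefore suffices to prove $a(A)<0$, i.e.\ $\bigl[H^2_{\fm}(A)\bigr]_j=0$ for all $j\ge 0$.

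Assume for contradiction that some homogeneous $[\alpha]\in\bigl[H^2_{\fm}(A)\bigr]_j$ with $j\ge 0$ is nonzero, and set $I=\mathrm{ann}_A([\alpha])$, a proper homogeneous ideal, hence generated by forms of degree $\ge 1$. By the theorem of Roberts--Singh--Srinivas (\cite[Theorem 3.4, Corollary 3.5]{RSS07}) the image of $[\alpha]$ in $H^2_{\fm}(A^{+GR})$ is killed by homogeneous elements of arbitrarily small positive degree; since $A^{+GR}=\varinjlim S$ over finite normal graded extensions $S$ of $A$, for any $\epsilon\in(0,1)$ we may pick such an $S$ and a homogeneous $c\in S$ with $0<\deg c<\epsilon$ which kills the image of $[\alpha]$ in $H^2_{\fm}(S)$. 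Because $R$ is an fpqc analogue of a splinter there is an $R$‑algebra $T$, faithfully flat over $R$ (hence over $A$), with $A\to S\to T$. Flatness gives $H^2_{\fm}(A)\hookrightarrow H^2_{\fm}(T)\cong H^2_{\fm}(A)\otimes_A T$, and since the cyclic submodule $A[\alpha]\cong A/I$ remains cyclic after tensoring with the flat algebra $T$, the annihilator of the image of $[\alpha]$ in $H^2_{\fm}(T)$ is exactly $IT$, which is proper. On the other hand $c$ kills the image of $[\alpha]$ already in $H^2_{\fm}(S)$, hence in $H^2_{\fm}(T)$, so $c\in IT$.

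Now the permissible Cohen--Macaulay hypothesis enters: the faithfully flat $R$‑algebra $T$ maps to a permissible Cohen--Macaulay $R$‑algebra $B$ in the sense of \cite{HH95}. The idea is to use the ``big equational tight closure'' structure attached to $B$ to realize $B$ (or an $A$‑algebra built from it) as an $\mathbb{R}_{\ge 0}$‑graded big Cohen--Macaulay $A$‑algebra that receives $A^{+GR}$ compatibly with degrees; then $c\in IT$ pushes forward to $c\in IB$, where on the one hand $c$ is a nonzero homogeneous element of degree $<1$, and on the other hand $IB$ lies in the part of $B$ of degree $\ge 1$ (being generated by forms of degree $\ge 1$ over a non‑negatively graded ring), while $\fm B\neq B$ since $B$ is big Cohen--Macaulay. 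This contradiction forces $I=A$, so $[\alpha]=0$. Hence $a(A)<0$ and, by Theorem~\ref{thm:Wat}, $A$ and therefore $R$ has rational singularities. Finally, if $R$ is in addition Gorenstein it is a Gorenstein rational surface singularity, i.e.\ a rational double point, so by the classical theory of such singularities (\cite{Pri67}, \cite{Lip78}) it has a module‑finite extension $S$ that is a regular local ring; by the Proposition above --- an fpqc analogue of a splinter admitting a module‑finite regular extension is regular --- $R$ is regular.

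The main obstacle is exactly the step in the third paragraph: converting the ``almost'' vanishing of Roberts--Singh--Srinivas into genuine vanishing. In Theorem~\ref{thm:main} this was immediate because $R\to R^{+GR}$ was literally flat, so annihilators base‑changed and the grading of $R^{+GR}$ supplied the degree estimate for free; here no flat map from $R$ to a graded ring containing $R^{+GR}$ is available, and one must instead extract from the permissible Cohen--Macaulay / equational‑tight‑closure machinery of \cite{HH95} a graded big Cohen--Macaulay algebra through which $S\subseteq A^{+GR}$ and the $T$'s map compatibly in a degree‑preserving way. Arranging this --- in particular producing a compatible grading on the big Cohen--Macaulay algebra and checking that $c$ survives as a nonzero element of $B$ --- is where the real work lies; everything else is either elementary or a citation.
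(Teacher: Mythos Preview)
Your overall architecture matches the paper's: reduce to Watanabe's criterion, use Roberts--Singh--Srinivas to produce a small-degree annihilator, push through the faithfully flat $T$ supplied by the fpqc-splinter hypothesis, then invoke the permissible Cohen--Macaulay algebra to derive a contradiction; the Gorenstein endgame via rational double points is identical. The difficulty is exactly where you flag it, and it is a genuine gap --- but the paper closes it by a different mechanism than the one you sketch.

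You propose to obtain the contradiction by endowing the permissible big Cohen--Macaulay algebra $B$ with an $\mathbb{R}_{\ge 0}$-grading compatible with $A^{+GR}$ and then comparing degrees \emph{inside} $B$. Nothing in \cite{HH95} furnishes such a grading, and there is no reason one should exist; you also need $c$ to remain nonzero in $B$, which is not guaranteed. The paper sidesteps all of this. After establishing $z\in (y_1,\dots,y_n)T$ by flat base change of the colon ideal (your $c\in IT$), it pushes along $T\to U$ to get $z\in(y_1,\dots,y_n)U$ and then applies \cite[Theorem~5.6(a)]{Ho94} and \cite[Theorem~5.12]{HH95}: for a \emph{permissible} big Cohen--Macaulay algebra $U$, membership $z\in IU$ forces $z$ into the big equational tight closure of $(y_1,\dots,y_n)$, hence a fortiori into its \emph{integral closure}. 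Integral closure is characterised valuatively \cite[Theorem~10.2.4(a)]{HS18}, so testing with the grading valuation $v$ on $R^{+GR}$ yields $v(z)\ge\min_i v(y_i)$, contradicting the choice $v(z)<\min_i v(y_i)$ from Roberts--Singh--Srinivas. The entire degree comparison thus takes place via $v$ back on the graded side; no grading on $U$ is ever needed, and whether $z$ survives as a nonzero element of $U$ is irrelevant. This is precisely what the word ``permissible'' buys you --- the chain $IU \leadsto$ big equational tight closure $\leadsto$ integral closure $\leadsto$ valuation lower bound --- and it is the missing idea in your third paragraph.
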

\begin{proof}
We may assume $R$ itself is the $\mathbb{N}$-graded ring while talking about local cohomology since it is invariant under completion. Let $\alpha$ be in $[H^2_{m}(R)]_{\geq 0}$ and $\frac{y}{x_{1}...x_{m}}$ a fraction representing $\alpha$. Let $v$ be the valuation arising from grading. Translating the cohomological statement from \cite[Theorem 3.6]{RSS07} to containment of  implies that $R$ has a module finite extension $S$ inside $R^{+GR}$ such that there is an element $z \in R^{+GR}$ with $v(z) < min_{i} v(y_{i})$ where $y_{i}$ generate the colon ideal $(y:_{R} (x_{1},...,x_{m}))$ and $z \in (y:_{R^{+GR}} (x_{1},...,x_{m}))$. Note that R is excellent. Since $R$ is a fpqc analgoue of a splinter we have a flat $R$ algebra $T$ which contains $S$ and which maps to a permissible big Cohen-Macaulay algebra $U$. Since colon ideals base change under flat maps we have the same containment in $U$. By works of Hochster-Huneke \cite[Theorem 5.6 (a)]{Ho94} and \cite[Theorem 5.12]{HH95}, $z$ is then contained in the big equational tight closure of $(y_{1}, . . . , y_{n})$. Hence, it is a fortiori contained in the integral closure of $(y_{1}, . . . , y_{n})$. Now the result follows by the characterisation of integral closure in terms of valuations (\cite[Theorem 10.2.4 a)]{HS18}). If one considers the valuation induced by the grading we get that $v(z) \geq min_{i} v(y_{i})$ however using the theorem of Roberts, Singh, and Srinivas we choose $z$ to have valuation lower than $v(y_{i})$ for all $i$. Hence the colon ideal must be the entire ring and the cohomology class must be $0$. Hence by \cite[Theorem 2.2]{Wat83} we have that the ring has rational singularities. If in addition $R$ is Gorenstein then it is a rational double point and the result follows from propositions above (rational double points have module finite extensions which are regular). 
\end{proof}

Techniques inspiring the theorem above can be found in \cite[Lemma 2.2]{BS12}. This bolsters our philosophy that our main result and answers to Question \ref{BIMq} can be considered as analogues of Kunz's theorem in charactersitic zero as the fact that (F-finite) fpqc analogues of splinters are regular rings in positive characteristic follows from Kunz's theorem. 

\begin{remark}
We do not know whether one can conclude that $R$ is Gorenstein with the assumptions (1) and (2) above. Conceptually speaking this is because the author is not aware of a way to detect Gorensteinness using closure operations similar to rationality. 
\end{remark}

\begin{remark}
It is possible that the above proof will go through if we replace permissible Cohen-Macaulay algebras with Schouten's big Cohen-Macaulay algebras \cite{Sch03} (big Cohen-Macaulay algebras which arise as ultraproducts of positive characteristic big Cohen-Macaulay algebras) in place of permissible big Cohen-Macaulay algebras. 
\end{remark}

\begin{remark}
The assumption `maps to a permissible big Cohen-Macaulay algebra' is perhaps not that surprising. Due to the absence of Frobenius in characteristic zero and mixed characteristic authors have had to impose conditions on big Cohen-Macaulay algebras (to make them more controllable) in the past. See \cite[footnote on p. 37]{CLM+22} and first arxiv versions of \cite{MS21}. The recent work T. Yamaguchi \cite{Ya22} tries to relate arbitrary big Cohen-Macaulay algebras in characteristic zero with Schouten's big Cohen-Macaulay algebras \cite{Sch03}.
\end{remark}

\begin{remark}
Fpqc analogues of splinters behave well with ultraproducts and in particular ascend under completion. However it is not known whether they are compatible with Hochster's $\Gamma$ construction and hence we do not know whether one can omit $F$-finiteness from the positive characteristic statement that $F$-finite fpqc analogues of splinters are regular rings. We are grateful to S. Lyu for this remark.   
\end{remark}

\section{Acknowledgements}
The author thanks Mohsen Asgharzadeh, Bhargav Bhatt, Neil Epstein, Arnab Kundu, Shiji Lyu, Linquan Ma, Devlin Mallory, Alapan Mukhopadhyay, Swaraj Pande, Vaibhav Pandey, Sambit Senapati, Kazuma Shimomoto, Austyn Simpson, Sridhar Venkatesh and his advisor Kevin Tucker for conversations, encouragement, friendship, mentorship, useful suggestions, or a reading of the manuscript. The author especially thanks Wenliang Zhang for bringing his attention to Theorem \ref{BIMpm} in Fall 2019 when the mixed characteristic statement was not known (in dimensions $4$ or more).


\begin{thebibliography}{EGAIV\textsubscript{2}}
   
   \bibitem[Abe04]{Abe04}
     I. M. Aberbach. ``The vanishing of $Tor_{1}^{R}(R^{+}, k)$ implies that $R$ is regular." \textit{Proc. Amer. Math. Soc.} 133 (2004) 27–29.
     \textsc{doi}: \burlalt{https://doi.org/10.1090/S0002-9947-02-03162-8}{10.1090/S0002-9947-02-03162-8}.
     
   \bibitem[Art71]{Art71}
     M. Artin. ``On the joins of Hensel rings.'' \textit{Adv. Math} 7 (1971), pp. 282--296.
     \textsc{doi}: \burlalt{https://doi.org/10.1090/S0002-9947-02-03162-8}{10.1090/S0002-9947-02-03162-8}.
    
   \bibitem[AH97]{AH97} 
     Ian M. Aberbach and Melvin Hochster. ``Finite Tor dimension and failure of coherence in absolute integral closures'' \textit{Journal of Pure and Applied Algebra} Volume 122, Issue 3, 1997, Pages 171-184, ISSN 0022-4049.
     \textsc{doi}: \burlalt{https://doi.org/10.1016/S0022-4049(97)00049-2}{10.1016/S0022-4049(97)00049-2}.
    
    \bibitem[AF21]{AF21} 
      Y. André and L Fiorot. ``On the canonical, fpqc, and finite topologies on affine schemes. The state of the art'' \textit{Journal of Pure and Applied Algebra} Volume 122, Issue 3, 1997, Pages 171-184, ISSN 0022-4049.
      \textsc{doi}: \burlalt{https://doi.org/10.1016/S0022-4049(97)00049-2}{10.1016/S0022-4049(97)00049-2}.
    
   \bibitem[AL08]{AL08}
     I. Aberbach and J. Li. ``Asymptotic vanishing conditions which force regularity in local rings of prime characteristic" \textit{Math. Res. Lett.} (15) 4 (2008), pp. 815–820. 
     \textsc{mr:} 1147957
     \textsc{doi}: \burlalt{https://doi.org/10.2307/2946563}{10.2307/2946563}.
    
   \bibitem[And18]{And18}
     Y. André. ``La conjecture du facteur direct.'', \textit{Publ. Math. Inst. Hautes Etudes Sci.} 127 (2018), pp. 71–93. 
     \textsc{doi}: \burlalt{https://doi.org/10.1007/s10240-017-0097-9}{10.1007/s10240-017-0097-9}
   
   \bibitem[Bha12]{Bha12}
     B. Bhatt. Derived splinters in positive characteristic, Compos. Math. 148 (2012), pp. 1757–1786.
   
   \bibitem[Bha16]{Bha16}
     B. Bhatt. (2014). Almost direct summands. Nagoya Mathematical Journal, 214, 195-204. doi:10.1215/00277630-2648180

   \bibitem[Bha21]{Bha21}
     B. Bhatt. ``Cohen-Macaulayness of absolute integral closures.'' \textit{Eprint, arxiv:2008.08070v2}, October 2021.
  
   \bibitem[BIM19]{BIM19}
     B. Bhatt, S. B. Iyengar, and L. Ma. ``Regular rings and perfect(oid) algebras'' \textit{Communications in Algebra}, 47:6, Pages 2367-2383.
     \textsc{doi}: \burlalt{https://doi.org/10.1080/00927872.2018.1524009}{10.1080/00927872.2018.1524009}.
    
   \bibitem[BS12]{BS12}
     H. Brenner and A. St\"{a}bler. ``Dagger closure in regular rings containing a field.'' \textit{J. of Algebra}, Pages 176-185 from Volume 370 (2012). \textsc{doi}: \burlalt{https://doi.org/10.1016/j.jalgebra.2012.07.043}{10.1016/j.jalgebra.2012.07.043}. 
    
   \bibitem[CLM+22]{CLM+22}
     H. Cai, S. Lee, L. Ma, K. Schwede, and K. Tucker. ``Perfectoid signature, perfectoid Hilbert-Kunz multiplicity, and an application to local fundamental groups". September 2022 \textit{Eprint, arxiv:2209.04046}.
    
   \bibitem[DM20]{DM20}
     R. Datta and T. Murayama. ``Permanence properties of F-injectivity'', January 2020, \textit{Eprint, arxiv.org:1906.11399}.
    
   \bibitem[ES16]{ES16}
     N. Epstein and J. Shapiro. ``The Ohm-Rush content function'', J. Algebra Appl. 15 (2016), 1650009.
  
   \bibitem[ES19]{ES19}
     N. Epstein and J. Shapiro. ``The Ohm-Rush content function II''. Noetherian rings, valuation domains, and base change. J. Algebra Appl. 18 (2019), 1950100.
    
   \bibitem[ES21]{ES21}
     N. Epstein and J. Shapiro. ``The Ohm-Rush content function III''. Completion, globalization, and power-content algebras.  \textit{Eprint, arxiv.org:2008.07616}.

   \bibitem[Har98]{Har98}
     N. Hara. ``A characterization of rational singularities in terms of injectivity of Frobenius maps'', \textit{Amer. J. Math.} 120 (1998), no. 5, 981–996. MR1646049 (99h:13005)
    
   \bibitem[Ho75]{Ho75}
     M. Hochster. ``Topics in the homological theory of modules over commutative rings'', In “CMBSregional conference series” 24, Amer. Math. Soc. (1975), vii+75.
    
   \bibitem[Ho94]{Ho94}
     M. Hochster. ``Tight closure in equal characteristic'', big Cohen-Macaulay algebras, and solid closure, Contemp. Math. 159 (1994), 173–196.
    
   \bibitem[Ho02]{Ho02} 
     M. Hochster. ``Big Cohen–Macaulay algebras in dimension three via Heitmann’s theorem'', J. Algebra 254 (2002) 395–408.
   
   \bibitem[HH91]{HH91}
     M. Hochster and C. Huneke. ``Tight closure and elements of small order in integral extensions" \textit{Journal of Pure and Applied Algebra}, Volume 71, Issues 2–3, 1991, Pages 233-247, ISSN 0022-4049, \textsc{doi:} \burlalt{https://doi.org/10.1016/0022-4049(91)90149-V}{10.1016/0022-4049(91)90149-V}.

   \bibitem[HH94]{HH94}
     M. Hochster and C. Huneke. ``Infinite integral extensions and big Cohen-Macaulay algebras" \textit{Ann. of Math.} (2) 135 (1992), no. 1, pp. 53–89. 
     \textsc{mr:} 1147957
     \textsc{doi}: \burlalt{https://doi.org/10.2307/2946563}{10.2307/2946563}.
    
   \bibitem[HH95]{HH95}
     M. Hochster and C. Huneke. ``Applications of the existence of big
     Cohen-Macaulay algebras.'' \textit{Adv. Math.} 113.1 (1995), pp. 45--117.
     \textsc{doi}: \burlalt{https://doi.org/10.1006/aima.1995.1035}{10.1006/aima.1995.1035}.
     \textsc{mr}: \burlalt{https://mathscinet.ams.org/mathscinet-getitem?mr=1332808}{1332808}.
    
   \bibitem[HJ21]{HJ21}
     M. Hochster and J. Jefferies. ``Extensions of primes, flatness, and intersection flatness" \textit{Commutative Algebra: 150 years with Roger and Sylvia Wiegand}, (2021) 63–81.
    
   \bibitem[HL07]{HL07}
     C. Huneke and G. Lyubeznik. ``Absolute integral closure in positive characteristic''. Adv. Math. 210 (2007), 498–504.
  
   \bibitem[HS18]{HS18}
     C. Huneke and I. Swanson. ``Integral Closure of Ideals, Rings, and Modules" \textit{LMS Lecture Note Series} 336 2018 (online version) 
    
   \bibitem[Hun96]{Hun96}
     C. Huneke. ``Tight Closure and its Applications", volume 88 of CBMS Regional Conf. Ser. in Math. Amer. Math. Soc., 1996.
    
   \bibitem[Kap74]{Kap74}
     I. Kaplansky. ``Commutative Rings". The University of Chicago Press.
    
   \bibitem[Kol13]{Kol13}
     János Kollár.
     \newblock {\em Singularities of the minimal model program}, volume 200 of {\em
     Cambridge Tracts in Mathematics}.
     \newblock Cambridge University Press, Cambridge, 2013.
     \newblock With a collaboration of Sándor Kovács.
     \href{https://doi.org/10.1017/CBO9781139547895}{DOI:10.1017/CBO9781139547895}.
    
   \bibitem[Kov20]{Kov20}
     S. J. Kov\'acs. ``Rational singularities.'' Jul. 23, 2020. 
     \burlalt{https://arxiv.org/abs/1703.02269v8}{ arXiv:1703.02269v8}
     \burlalt{https://arxiv.org/abs/1703.02269v8}{[math.AG]}.
    
   \bibitem[Lip78]{Lip78}
     J. Lipman. \textit{Rational singularities, with applications to algebraic surfaces and unique factorization.} Inst. Hautes Etudes Sci. Publ. Math. ´ 36 (1969), 195–279. MR 43:1986
  
   \bibitem[Ma18]{Ma18}
     L. Ma. ``The vanishing conjecture for maps of Tor and derived splinters.'', J. Eur. Math. Soc. 20 (2018), no. 2, pp. 315–338
  
   \bibitem[Mal21]{Mal21}
     D. Mallory. ``Bigness of the tangent bundle of del Pezzo surfaces and D-simplicity", Algebra Number Theory 15(8): 2019-2036 (2021). DOI: 10.2140/ant.2021.15.2019. 
    
   \bibitem[Mal22]{Mal22}
     D. Mallory. ``Homogeneous coordinate rings as direct summands of regular rings'', June 2022. 
     \textit{Eprint, arXiv:2206.03621}.
    
   \bibitem[Mat89]{Mat89}
     H. Matsumura. \textit{Commutative ring theory.} Second ed. Translated from
     the Japanese by M. Reid. Cambridge Stud. Adv. Math., Vol. 8. Cambridge:
     Cambridge Univ. Press, 1989.
     \textsc{doi}: \burlalt{https://doi.org/10.1017/CBO9781139171762}{10.1017/CBO9781139171762}.
     \textsc{mr}: \burlalt{https://mathscinet.ams.org/mathscinet-getitem?mr=1011461}{1011461}.
  
   \bibitem[Mur21]{Mur21}
     T. Murayama. \textit{Relative vanishing theorems for $\mathbb{Q}$-schemes.}

   \bibitem[MS97]{MS97}
     V. B. Mehta and V. Srinivas. ``A characterization of rational singularities'', \textit{Asian J. Math.} 1 (1997), no. 2, 249–271. MR1491985 (99e:13009)
    
   \bibitem[MS20]{MS20}
     L. Ma and K. Schwede. ``A Kunz-type characterization of regular rings via alterations'' \textit{Journal of Pure and Applied Algebra}, Volume 224, Issue 3, 2020, Pages 1124-1131, ISSN 0022-4049, https://doi.org/10.1016/j.jpaa.2019.07.008.

   \bibitem[MS21]{MS21}
     L. Ma and K. Schwede. ``Singularities in mixed characteristic via perfectoid
     big Cohen-Macaulay algebras.'' \textit{Duke Math. J.} 170.13 (2021), pp.
     2815--2890.
     \textsc{doi}: \burlalt{https://doi.org/10.1215/00127094-2020-0082}{10.1215/00127094-2020-0082}.
     \textsc{mr}: \burlalt{https://mathscinet.ams.org/mathscinet-getitem?mr=4312190}{4312190}.
  
   \bibitem[Pat22]{Pat22}
     S. Patankar. ``Coherence of absolute integral closures.'' \textit{Proc. Amer. Math. Soc. Ser. B} 9 (2022), 75-89. 
  
   \bibitem[Pri67]{Pri67} 
     D. Prill. ``Local classification of quotients of complex manifolds by discontinuous groups.'' Duke Math. J. 34 (1967), 375–386, 210944.
  
   \bibitem[Wat83]{Wat83} K.i. Watanabe. Rational singularities with k*-action, in: Commutative algebra (Trento, 1981), Lecture Notes in Pure and Appl. Math. 84, Dekker, New York, (1983), 339–351.
   
   \bibitem[RSS07]{RSS07}
     P. Roberts, A. K. Singh, and V. Srinivas. ``Annihilators of local cohomology in characteristic zero.'' \textit{Illinois J. Math.} 51 (1) 237 - 254, Spring 2007. 
     \textsc{doi}: \burlalt{https://doi.org/10.1215/ijm/1258735334}{10.1215/ijm/1258735334}.
  
   \bibitem[Sch03]{Sch03}
     H. Schoutens. ``On the vanishing of Tor of the absolute integral closure.'' \textit{Journal of Algebra}, Volume 275, Issue 2, 2004, Pages 567-574, ISSN 0021-8693, \textsc{doi:} \burlalt{https://doi.org/10.1016/S0021-8693(03)00504-0}{10.1016/S0021-8693(03)00504-0}.

   \bibitem[Sch12]{Sch12}
     P. Scholze. ``Perfectoid spaces.'', \textit{Publ. Math. Inst. Hautes Etudes Sci},  116 (2012), 245–313. 3090258.

   \bibitem[Shi10]{Shi10}
     K. Shimomoto. ``F-coherent rings with applications to tight closure theory'' \textit{Journal of Algebra}, Volume 338, Issue 1, 2011, Pages 24-34, ISSN 0021-8693.
     \textsc{doi:} \burlalt{https://doi.org/10.1016/j.jalgebra.2011.05.006}{10.1016/j.jalgebra.2011.05.006}.

   \bibitem[Ser00]{Ser00}
     J. P. Serre. ``Local algebra'' Springer Monographs in Mathematics, Springer-Verlag, Berlin, 2000.
    
   \bibitem[Smi94]{Smi94}
     K. Smith. ``Tight closure of parameter ideals'' \textit{Invent Math} 115, 41–60 (1994). 
     \textsc{doi:} \burlalt{https://doi.org/10.1007/BF01231753}{10.1007/BF01231753}.
    
   \bibitem[Sta10]{Sta10}
     A. St\"{a}bler. Dagger closure, Ph.D. thesis, Universität Osnabrück, 2010
    
   \bibitem[ST21]{ST21} 
     K. Shimomoto and E. Tavanfar. ``On local ring without small Cohen-Macaulay algebras in mixed characteristic" October 2021. \textit{Eprint, arXiv:2109.12700}
    
   \bibitem[Ya22]{Ya22}
     T. Yamaguchi. ``Big Cohen-Macaulay test ideals in equal characteristic zero via ultraproducts" July 2022. 
    \textit{Eprint, arXiv:2207.04247}

\end{thebibliography}
\end{document}